\newcommand{\N}{\mathbb{N}}
\newcommand{\Z}{\mathbb{Z}}
\newcommand{\R}{\mathbb{R}}
\newcommand{\C}{\mathbb{C}}
\renewcommand{\hom}{\mathsf{hom}}
\newcommand{\loc}{\mathrm{loc}}
\renewcommand{\epsilon}{\varepsilon}
\renewcommand{\i}{{\rm i}}
\newcommand{\nablao}{\overset{\mathsf{o}}{\operatorname{\rm grad}}_x}
\newcommand{\nablap}{\operatorname{\rm grad}_y^{\#}}
\newcommand{\curlo}{\overset{\mathsf{o}}{\curl}_x}
\newcommand{\divp}{\operatorname{\rm div}_y^{\#}}
\DeclareMathOperator{\curl}{curl}
\DeclareMathOperator{\dive}{div}
\DeclareMathOperator{\rge}{ran}
\DeclareMathOperator{\kar}{ker}
\DeclareMathOperator{\dom}{dom}
\DeclareMathOperator{\lin}{lin}
\DeclareMathOperator{\m}{m}
\DeclareMathOperator{\diag}{diag}
\renewcommand{\Re}{\operatorname{Re}}
\DeclareMathAccent{\Circ}{\mathalpha}{operators}{"17}
\newcommand{\unf}{\mathcal{T}_{\varepsilon}} 
\newcommand{\invunf}{\mathcal{T}_{-\varepsilon}}
\newcommand{\brac}[1]{\left({#1}\right) }
\newcommand{\cb}[1]{\left\lbrace {#1} \right\rbrace}
\newcommand{\expect}[1]{\left\langle {#1} \right\rangle}
\newcommand{\e}{{\rm e}}
\let\eps\varepsilon
\let\phi\varphi
\let\leq\leqslant
\let\ge\geqslant
\let\geq\geqslant
\def\@row#1,{#1\@ifnextchar;{\@gobble}{&\@row}}
\def\@matrix{%
	\expandafter\@row\my@arg,;%
	\@ifnextchar({\\ \get@in@paren{\@matrix}}{\after@matrix}%
}
\def\matrixtype#1#2#3{%
	\ifmmode\def\after@matrix{\end{#2}\right#3}%
\else\def\after@matrix{\end{#2}\right#3$}$\fi
\left#1\begin{#2}\get@in@paren{\@matrix}$%
}
\def\@column#1,{#1\@ifnextchar;{\@gobble}{\\ \@column}}
\newcommand\vect{}
\def\svect(#1){\left(\begin{smallmatrix}\@column#1,;\end{smallmatrix}\right)}
\def\vect{\get@in@paren{\@vect}}
\def\@vect{\left(\begin{matrix}\expandafter\@column\my@arg,;\end{matrix}\right)}
\def\get@in@paren#1({\def\my@arg{}\def\my@rest{}\def\after@get{#1}\get@arg}
\let\e@a\expandafter
\def\get@arg#1){\e@a\kl@test\my@rest#1(;}
\def\kl@test#1(#2;{\e@a\def\e@a\my@arg\e@a{\my@arg#1}%
	\ifx:#2:\let\my@exec\after@get
	\else\let\my@exec\get@arg
	\e@a\def\e@a\my@arg\e@a{\my@arg(}%
	\def@rest#2;%
	\fi\my@exec}
\def\def@rest#1(;{\def\my@rest{#1\kl@zu}}
\def\kl@zu{)}
\newcommand\MyPairedDelimiter{%
	\@ifstar{\My@Paired@Delimiter{{}}}
	{\My@Paired@Delimiter{}}%
}
\newcommand\My@Paired@Delimiter[4]{%
	\newcommand#2{%
		\@ifstar{\start@PD{#1}{\delimitershortfall=-1sp}{#3}{#4}}
		{\start@PD{#1}{}{#3}{#4}}%
	}%
}
\newcommand\start@PD[5]{%
	#1\mathopen{\mathpalette\put@delim@helper{\put@delim{#2}{#3}{.}{#5}}}%
	#5%
	\mathclose{\mathpalette\put@delim@helper{\put@delim{#2}{.}{#4}{#5}}}%
}
\newcommand\put@delim@helper[2]{%
	\hbox{$\m@th\nulldelimiterspace=0pt #2#1$}%
}
\newcommand\put@delim[5]{%
	\setbox\z@\hbox{$\m@th#5{#4}$}%
	\setbox\tw@\null
	\ht\tw@\ht\z@ \dp\tw@\dp\z@
	#1#5%
	\left#2\box\tw@\right#3%
}
\MyPairedDelimiter*{\abs}{\lvert}{\rvert}
\MyPairedDelimiter*{\norm}{\lVert}{\rVert}
\MyPairedDelimiter{\set}{\{}{\}}
\theoremstyle{plain} 
\newtheorem{theorem}{Theorem}[section]
\newtheorem{corollary}[theorem]{Corollary}
\newtheorem{lemma}[theorem]{Lemma}
\newtheorem{proposition}[theorem]{Proposition}
\theoremstyle{definition}
\newtheorem{definition}[theorem]{Definition}
\newtheorem{remark}[theorem]{Remark}
\newtheorem{assumption}{Assumption}
\begin{document}

\medmuskip=4mu plus 2mu minus 3mu
\thickmuskip=5mu plus 3mu minus 1mu
\belowdisplayshortskip=9pt plus 3pt minus 5pt

\title{Two-scale homogenization of abstract linear time-dependent PDEs}

\author{Stefan Neukamm, Mario Varga and Marcus Waurick}

\date{}

\maketitle

\begin{abstract}
 Many time-dependent linear partial differential equations of mathematical physics and continuum mechanics can be phrased in the form of an abstract evolutionary system defined on a Hilbert space. In this paper we discuss a general framework for homogenization (periodic and stochastic) of such systems. The method combines a unified Hilbert space approach to evolutionary systems with an operator-theoretic reformulation of the well-established periodic unfolding method in homogenization. Regarding the latter, we introduce a well-structured family of unitary operators on a Hilbert space that allows to describe and analyze differential operators with rapidly oscillating (possibly random) coefficients. We illustrate the approach by establishing periodic and stochastic homogenization results for elliptic partial differential equations, Maxwell's equations, and the wave equation.
\end{abstract}
\textbf{Keywords:} periodic and stochastic homogenization, unfolding, abstract evolutionary equations, Maxwell's equations

\tableofcontents
\section{Introduction}
In this work we propose an abstract strategy for homogenization of evolutionary equations. The methods we present are based on a combination of three concepts: The abstract unified theory for evolutionary equations introduced in \cite{PicPhy} (see also \cite{MPTW14_PDE_H,A11}), their homogenization theory developed in \cite{W11_P} (with plenty of applications, see references below), and the stochastic unfolding procedure recently introduced in \cite{neukamm2018stochastic,heida2019stochastic}. 

According to \cite{PicPhy} (see also \cite{A11,PTW15_WP_P,W16_H} and the references therein) a large variety of linear evolutionary equations of mathematical physics can be recast in the following form:
\begin{equation*}
\brac{\mathcal{M}(\partial_{t,\nu})+A}u=F,
\end{equation*}
where $z\mapsto \mathcal{M}(z)$ denotes a family of linear bounded operators on a Hilbert space $H$ and $A$ is a skew-selfadjoint densely defined operator on $H$; $\mathcal{M}(\partial_{t,\nu})$ is defined in the sense of an explicit functional calculus for $\partial_{t,\nu}$, the time derivative, which is established as a boundedly-invertible operator. We refer to \cite{PicPhy} (see also \cite{SW16_SD,W16_H,A11,Trostorff2015a,Trostorff2013}) for existence and other essential results regarding the above equation. We briefly recall the most important ingredients of this framework in Section \ref{sec:ahil}.   

In many applications one is interested to describe physical properties of systems  (e.g., composites, alloys, metamaterials) that feature material heterogeneities on a small length (or time) scale, say $\varepsilon\ll 1$. 
In the above described framework this may be described by equations of the form
\begin{equation}\label{eq:240}
\brac{\mathcal{M}_{\varepsilon}(\partial_{t,\nu})+ A}u= F,
\end{equation} 
where $\mathcal{M}_{\varepsilon}$ denotes a sequence of operators with ``coefficients oscillating on scale $\varepsilon$''. The goal of homogenization is to derive a simplified, \textit{effective} equation, say an equation of the form 
\begin{equation}\label{eq:244}
\brac{\mathcal{M}_{0}(\partial_{t,\nu})+A}u= F,
\end{equation}
 that captures the large scale properties of the original system. This is typically achieved by studying the asymptotics $\varepsilon\to 0$, and by proving that the solution of (\ref{eq:240}) converges in a suitable sense to the solution of the effective problem (\ref{eq:244}) as $\varepsilon\to 0$. {In the context of abstract evolutionary equations this has been studied in \cite{W11_P,W12_HO,W13_HP,W14_G,W14_FE,W16_HPDE,W16_H,Waurick2017,W18_NHC}. Apart from \cite{W12_HO,W14_G,W18_NHC}, the results obtained in these works are typically $H$-compactness statements, that is, it is shown that given a family of operators $(\mathcal M_\varepsilon)$ (in a suitable class), it is possible to find (i) a subsequence along which homogenization occurs and (ii) a ``homogenized'' operator $\mathcal M_0$ that appears in the homogenized equation and that possibly belongs to a larger class. In applications (e.g., modelling of microstructured materials) the operator $\mathcal M_\varepsilon$ typically originates from a partial differential equation with coefficients that rapidly oscillate on scale $\varepsilon$ and that encode the specific form of the material's microstructure. In the references \cite{W12_HO,W14_G,W18_NHC} for particular equations,  some $H$-convergence results have been obtained (using certain properties of the microstructure providing explicit formulas for $\mathcal M_0$) rather than $H$-compactness, only. Anyhow, it is desirable to incorporate \textit{information on the microstructure} into the \textit{general abstract} operator-theoretic approach, and to establish a microstructure-properties relation that allows to uniquely characterize the homogenized operator $\mathcal M_0$ based on the information on the microstructure.  }

In the classical field of homogenization of partial differential equations, the microstructure-properties relation often comes in form of homogenization formulas based on correctors and cell-problems. Moreover, various homogenization methods exist that explicitly exploit structural properties of the microstructure. In particular, two-scale convergence methods are convenient for problems involving periodic or random coefficients oscillating on a small (physical) scale. Periodic two-scale convergence \cite{Nguetseng1989,Allaire1992,lukkassen2002two} and the method of periodic unfolding \cite{cioranescu2002periodic,visintin2006towards,cioranescu2008periodic,Mielke2008} are well-suited for problems involving periodic coefficients. For stochastic homogenization problems (e.g., for models describing random heterogeneous materials) the notion of stochastic two-scale convergence \cite{Bourgeat1994,andrews1998stochastic,zhikov2006homogenization,heida2011extension} and the stochastic unfolding method \cite{heida2019stochastic,neukamm2018stochastic} are available as convenient tools.

In this paper we join the idea of the stochastic/periodic unfolding procedure with the homogenization approach for abstract evolutionary equations of the form (\ref{eq:240}) (that is we combine \cite{Papanicolaou1979,Bourgeat1994,neukamm2018stochastic} with the ideas rooted in \cite{W11_P,W12_HO}). In particular, we introduce an abstract family of operators which present a generalization of the stochastic unfolding operator (see Section \ref{s:gr}). Upon assuming a set of structural assumptions for the abstract unfolding operator, we derive a suitable homogenization result for a system of form (\ref{eq:240}). A merit in this procedure (in contrast to earlier works on abstract evolutionary equations) is that the structural conditions we assume allow us to incorporate information on the microstructure and to obtain an explicit description of the  effective model. The abstract homogenization result we obtain covers a large variety of problems. To illustrate this, we specifically reconsider periodic homogenization of elliptic partial differential equations, and obtain (as simple corollaries of our abstract theorem) stochastic homogenization results for the Maxwell's equations and the wave equation. Additionally, we derive corrector type results for the considered examples that are based on specific properties of the partial differential equation under consideration.

\textbf{Structure of the paper.} In order to motivate the theory, in Section \ref{sec:mot} we recall the standard setting for stochastic homogenization and provide the definition of stochastic unfolding. Section \ref{s:gr} provides the definitions of the unfolding operator and two-scale convergence from an abstract point of view. In the rest of that section we present some important properties of the latter notions. Section \ref{sec:4} is devoted to the derivation of a homogenization result for an abstract elliptic type problem. In Section \ref{sec:ahil} we briefly recall the setting for abstract evolutionary equations. Section \ref{sec:appl} provides a homogenization result for an abstract evolutionary equation. Section \ref{sec:exam} treats some particular examples of the previously discussed abstract theory. 

\subsection{Motivation: Two-scale homogenization}\label{sec:mot}
In this section we recall some classical results and concepts from periodic and stochastic homogenization of second order, divergence-form operators with uniformly elliptic coefficients. Our intention is to motivate some ideas and concepts of the operator-theoretic framework that we develop in the present paper. To fix ideas, let $Q\subset\R^n$ denote an open, bounded domain, $f\in L^2(Q)$,  and let $u_\eps\in H^1_0(Q)$ denote the unique weak solution to 
\begin{equation}\label{eq:1:1}
  -\nabla \cdot (a(\tfrac{\cdot}{\eps})\nabla u_\eps)=f\qquad\text{in }Q,
\end{equation}
where $a:\R^n\to\R^{n\times n}$ denotes a uniformly elliptic coefficient field, i.e., $a:\R^n\to \R^{n\times n}_{\lambda,\Lambda}$ is measurable  and $\R^{n\times n}_{\lambda,\Lambda}$ denotes (for some fixed constants of ellipticity $0<\lambda\leq\Lambda$) the set of matrices $a_0\in\R^{n\times n}$ satisfying  $a_0\xi\cdot\xi\geq \lambda|\xi|^2\text{ and }|a_0\xi|\leq \Lambda|\xi|$ (for all $\xi \in \R^n$).

Periodic homogenization is concerned with the case that the coefficient field $a$ in \eqref{eq:1:1} is periodic, say $a(\cdot+k)=a(\cdot)$ a.e.~in $\R^n$ for all $k\in\Z^n$. Under this condition a classical result (e.g., \cite{murat2018h,Bensoussan1978,CioDon}) states that there exists a uniformly elliptic matrix $a_{\mathsf{hom}}\in\R^{n \times n}$  such that $\brac{u_\eps}_{\varepsilon>0}$ weakly converges in $H^1_0(Q)$ (as $\varepsilon\to 0$) to the unique weak solution $u\in H^1_0(Q)$ to the homogenized equation \[-\nabla \cdot \brac{a_{\mathsf{hom}}\nabla u}=f.\] The homogenized coefficient matrix $a_{\mathsf{hom}}$ is characterized by the formula
\begin{equation*}
  a_{\mathsf{hom}}e_i=\int_{\Box}a(\nabla\phi_i+e_i)dx \qquad (i\in \cb{1,\ldots,n}),
\end{equation*}
where $\Box:= [0,1)^n$ represents the reference cell of periodicity, $e_i\in \R^n$ denotes the $i$'th unit vector, and $\phi_i\in H^1_{\loc}(\R^n)$ denotes a periodic solution to the corrector equation
\begin{equation}\label{eq:1:2}
  -\nabla \cdot \brac{a(\nabla\phi_i+e_i)}=0\qquad\text{in }\R^n.
\end{equation}
Thanks to the periodicity of $a$, \eqref{eq:1:2} can be solved by lifting the equation to the Sobolev space of periodic functions with zero mean (where Poincar\'e's inequality and uniform ellipticity of $a$ implies coercivity of the elliptic operator $-\nabla\cdot (a\nabla)$).
The \textit{corrector equation} \eqref{eq:1:2} is a key object in homogenization theory for elliptic equations, since its solution -- the \textit{corrector} $\phi_i$ -- captures the spatial oscillations of $u_\eps$ induced by the heterogeneity of the coefficient field $a$. This can be expressed in terms of the (formal) asymptotic expansions (using Einstein's summation convention)
\begin{equation*}
  u_\eps(x)\approx u(x)+\eps\phi_i(\tfrac{x}{\eps})\partial_i u(x),\qquad \nabla u_\eps(x)\approx \nabla u(x)+\nabla\phi_i(\tfrac{x}{\eps})\partial_iu(x)
\end{equation*}
or (more precisely) in form of the two-scale convergence statement
\begin{equation*}
  \nabla u_\eps\stackrel{2}{\rightarrow} \nabla u + \nabla \phi_i \partial_i u,
\end{equation*}
where $\stackrel{2}{\rightarrow}$ denotes the notion of (strong) two-scale convergence introduced in \cite{Nguetseng1989} and further investigated in \cite{Allaire1992} (see also \cite{lukkassen2002two}) and $\nabla u+ \nabla \varphi_i \partial_i u$ denotes the function $(x,y)\mapsto \nabla u(x)+ \nabla \varphi_i(y)\partial_{i}u(x)$. Also, another method closely related to two-scale convergence is the periodic unfolding method \cite{cioranescu2002periodic,visintin2006towards,cioranescu2008periodic}, which is based on an \textit{unfolding operator} that equivalently characterizes two-scale convergence (see also \cite{Mielke2008}).

In the stochastic case the coefficient field $a$ is assumed to be a random object, i.e., $(a(x))_{x\in\R^n}$ is viewed as a family of $\R^{n\times n}_{\lambda,\Lambda}$-valued random variables. Minimial requirements for stochastic homogenization (towards a deterministic limit) are \textit{stationarity} and \textit{ergodicity}. The former means that for any $x_1,\ldots,x_N\in \R^n$ and $z\in\R^n$, the distribution of $(a(x_1+z),\ldots,a(x_N+z))$ is independent of the shift $z\in\R^n$. Ergodicity means that any \textit{shift-invariant} (measurable) subset of the coefficients has probability $1$ or $0$. In their seminal work \cite{Papanicolaou1979}, Papanicolaou and Varadhan rephrased these conditions in an analytic framework that by now became a standard in stochastic homogenization. In the following we recall their framework and some objects involved in their approach. We refer to \cite{Papanicolaou1979,jikov2012homogenization} for details (see also the lecture notes \cite{neukamm2018introduction} for a self-contained presentation). The idea is to equip 
$\Omega:=\{a:\R^n\to\R^{n\times n}_{\lambda,\Lambda}\,:\,\text{$a$ measurable}\},$ the set of all uniformly elliptic coefficient fields, with a probability measure $\mu$ and to draw $a$ randomly from $\Omega$ according to $\mu$. The precise setup is as follows:

\begin{assumption}\label{assumpt:267} Let $(\Omega,\Sigma,\mu)$ denote a probability space with a countably generated $\sigma$-algebra (that implies separability of $L^2(\Omega)$), and let $\tau=\{\tau_x: \; x\in\R^n \}$ be a group of measurable bijections $\tau_x:\Omega\to \Omega$ such that:
\begin{enumerate}
\item (Group property). $\tau_0=\textrm{id}_\Omega$ and $\tau_{x+y}=\tau_x\circ \tau_y$ for all $x,y\in \R^n$.
\item (Measure preservation). $\mu(\tau_x A)=\mu(A)$ for all $A\in \Sigma$ and $x\in \R^n$.
\item (Measurability). $(\omega,x)\mapsto \tau_{x}\omega$ is $(\Sigma \otimes \mathcal{L}(\R^n),\Sigma)$-measurable ($\mathcal{L}(\R^n)$ denotes the Lebesgue-$\sigma$-algebra on $\R^n$).
\end{enumerate}
\end{assumption}
Let $a_0:\Omega\to\R^{n\times n}_{\lambda,\Lambda}$ be measurable, then $a(x,\omega):=a_0(\tau_x\omega)$ defines a uniformly elliptic, random coefficient field that is stationary (thanks to property (b)). Moreover, in this framework, the assumption of ergodicity reads: any shift-invariant set $A\in \Sigma$ (i.e., $\tau_x A\subset A$ for all $x\in\R^n$) satisfies $\mu(A)\in\{0,1\}$.

Under the conditions of Assumption \ref{assumpt:267} and ergodicity, Papanicolaou and Varadhan proved the following homogenization result: For $f\in L^2(Q)$ and $\eps>0$, let $u_\eps\in H^1_0(Q) \otimes L^2(\Omega)$ denote the unique (Lax-Milgram) solution to 
\begin{equation}\label{eq:1:1stoch}
  -\nabla \cdot (a_0(\tau_{\frac{x}{\varepsilon}}\omega)\nabla u_\eps(x,\omega))=f(x)\qquad\text{in } Q\times \Omega.
\end{equation}
Then there exists a uniformly elliptic coefficient matrix $a_{\hom}\in\R^{n\times n}$ (only depending on $\mu$ and $a_0$) such that $u_\eps$ weakly converges in $H^1_0(Q)\otimes L^2(\Omega)$ to the unique solution $u\in H^1_0(Q)$ of the homogenized equation $-\nabla \cdot (a_{\hom}\nabla u)=f$ in $Q$. The proof of Papanicolaou and Varadhan is based on Tartar's method of oscillating test functions and the main difficulty is to give sense to the corrector equation \eqref{eq:1:2} and the corrector $\varphi_i$ in the stochastic case.

An alternative proof of the above homogenization result was introduced in \cite{Bourgeat1994}, based on a stochastic counterpart of two-scale convergence---the notion of \textit{stochastic two-scale convergence in the mean} (see also \cite{andrews1998stochastic}). In particular, a bounded sequence $(u_\eps)_{\varepsilon}$ in $L^2(Q)\otimes L^2(\Omega)$ is said to two-scale converge in the mean to a function $u\in L^2(Q)\otimes L^2(\Omega)$, if for all test functions $\eta\in C^{\infty}_{c}(Q)$ and $\varphi \in L^2(\Omega)$, 
\begin{equation}\label{eq:319}
  \int_\Omega\int_Q u_{\eps}(x,\omega)\eta(x)\varphi(\tau_{\frac{x}{\varepsilon}}\omega)\,dx\,d\mu(\omega)\to \int_\Omega\int_Q u(x,\omega)\eta(x)\varphi(\omega)\,dx\,d\mu(\omega).
\end{equation}

\textbf{Stochastic homogenization via unfolding.} Recently, in \cite{neukamm2018stochastic, heida2019stochastic} Heida and the first two authors reconsidered the notion of two-scale convergence in the mean from the perspective of an \textit{unfolding operator} (see also \cite{varga2019stochastic}). This approach is motivated by (and shares many similarities with) the well-established notion of periodic unfolding \cite{cioranescu2002periodic}. In the following we briefly recall its definition (for more detail, see \cite{heida2019stochastic}).

For $\eta \in L^2(Q)$ and $\varphi \in L^2(\Omega)$, we define
\begin{equation}\label{eq:298}
\unf (\eta \otimes \varphi)(x,\omega)= \eta(x)\varphi(\tau_{-\frac{x}{\varepsilon}}\omega).
\end{equation}
Using the measure preserving property from Assumption \ref{assumpt:267} (b), it follows that 
\[
\|\unf (\eta\otimes \varphi)\|_{L^2(Q)\otimes L^2(\Omega)}=\|\eta\otimes \varphi\|_{L^2(Q)\otimes L^2(\Omega)},
\]
and by the density of the linear span of simple tensor products 
\[
L^2(Q)\overset{a}{\otimes}L^2(\Omega):=\lin\{(x,\omega)\mapsto \phi(x)\psi(\omega); \phi\in L^2(Q),\psi\in L^2(\Omega)\}\subset L^2(Q)\otimes L^2(\Omega) \; \text{dense},
\]
$\unf$ extends to a linear isometry $\unf: L^2(Q)\otimes L^2(\Omega)\to L^2(Q)\otimes L^2(\Omega)$. Moreover, the equality $\mathcal{T}_{\varepsilon}\mathcal{T}_{-\varepsilon}=1$ on simple tensors implies that $\mathcal{T}_\varepsilon$ is in fact unitary. A simple consequence of this definition is that for a bounded sequence $u_{\varepsilon}\in L^2(Q)\otimes L^2(\Omega)$ the weak convergence $\unf u_{\varepsilon}\rightharpoonup u$ is equivalent to stochastic two-scale convergence in the mean of $u_{\varepsilon}$ to $u$ (in sense of (\ref{eq:319})). Also, note that using the stochastic unfolding operator, we might rephrase equation (\ref{eq:1:1stoch}) as 
\begin{equation}
- \nabla \cdot \brac{\invunf a_0 \unf \nabla u}= f.
\end{equation}

In the present paper we go one step further and reconsider the idea of unfolding on an abstract operator-theoretic level where  
\begin{itemize}
\item $L^2(Q)$ and $L^2(\Omega)$ are replaced by general Hilbert spaces $H_d$ and $H_s$,
\item the stochastic unfolding operator is replaced by  a family of well-structured unitary operators,
\item $-\nabla \cdot$ and $\nabla$ are replaced by densely defined closed (unbounded) linear operators $C_d^*$ and $C_d$.
\end{itemize}
Our motivation is to develop a unified, operator-theoretic approach to homogenization (in the mean) of linear evolutionary problems with periodic, quasiperiodic or random (stationary) coefficients. The abstract unfolding strategy that we propose applies to a variety of linear PDEs and we present some examples in Section \ref{sec:exam}. Here we briefly explain one of the examples---stochastic homogenization of Maxwell's equations, which we discuss in detail in Section \ref{sec:maxwell}. In particular, for $Q\subseteq \R^3$ open, we consider the following system of equations: Find $(u_{\varepsilon}, q_{\varepsilon}): \R \times Q \to \C^3 \times \C^3 $ such that
\begin{align}\label{eq:348}
\begin{split}
\partial_{t} (\eta_{\varepsilon} u_{\varepsilon}) + \sigma_{\varepsilon}u_{\varepsilon}- \curl q_{\varepsilon} & =f,\\
\partial_{t} (\mu_{\varepsilon}q_{\varepsilon})+\curl u_{\varepsilon} & = g,
\end{split}
\end{align}
where $(f,g): \R \to L^2(Q)^3\oplus L^2(Q)^3$ is a datum and the random, oscillating coefficients are given in the form $\eta_{\varepsilon}(x,\omega)=\eta_0(x,\tau_{\frac{x}{\varepsilon}}\omega)$, $\sigma_{\varepsilon}(x,\omega) = \sigma_0(x,\tau_{\frac{x}{\varepsilon}}\omega)$, $\mu_{\varepsilon}(x,\omega)= \mu_0(x,\tau_{\frac{x}{\varepsilon}}\omega)$ with $\eta_0, \sigma_0, \mu_0 \in L^{\infty}(Q\times \Omega)^{3\times 3}$ that satisfy suitable assumptions. Note that the solution depends on $\omega\in \Omega$, which we see as a random configuration of the medium, and therefore we view the solution also as a random field, i.e., we seek functions such that at (almost) each time instance $t\in \R$ satisfy $\brac{u_{\varepsilon}(t),q_{\varepsilon}(t)}\in \brac{L^2(Q)\otimes L^2(\Omega)}^3 \oplus \brac{L^2(Q)\otimes L^2(\Omega)}^3$. In fact, we phrase \eqref{eq:348} in the form of the operator equation \eqref{eq:240} given on the functional space $L^2_{\nu}(\R; \brac{L^2(Q)\otimes L^2(\Omega)}^3 \oplus \brac{L^2(Q)\otimes L^2(\Omega)}^3)$, which is an exponentially weighted $L^2$-space with a parameter $\nu \in \R$ (see Section \ref{sec:ahil}). In the limit $\varepsilon\to 0$, we derive a two-scale homogenized system, that in the case of ergodic coefficients reads: Find $(u_0,\chi_1, q_0, \chi_2) \in L^2_{\nu}(\R; L^2(Q)^3 \oplus (L^2(Q)\otimes L^2_{\mathsf{pot}}(\Omega))\oplus L^2(Q)^3 \oplus (L^2(Q)\otimes L^2_{\mathsf{pot}}(\Omega)))$ such that
\begin{align*}
\partial_{t}  (\mathbb{E}[\eta_0 (u_0+ \chi_1)]) + \mathbb{E}[\sigma_0 \brac{u_0+\chi_1}]-\curl q_0 & = f, \\
\partial_{t} ( \mathbb{E}[\mu_0 (q_0+\chi_2)])+ \curl u_0 &= g, \\
-\mathrm{div}_{\omega}\brac{ \partial_{t} (\eta_0 (u_0+ \chi_1))+ \sigma_0 (u_0+ \chi_1)} & =0, \\
-\mathrm{div}_{\omega} \brac{\partial_{t} (\mu_0 (q_0+ \chi_2))} & = 0,  
\end{align*}
where $\mathbb{E}[\cdot]$ denotes the mathematical expectation in $(\Omega,\Sigma, \mu)$ and $\mathrm{div}_{\omega}$ is the stochastic divergence, which is defined in Section \ref{sec:ex2}. We may view the first two equations as the effective Maxwell system for the deterministic variables $\brac{u_0,q_0}$ and the correctors $\brac{\chi_1,\chi_2}$, that account for the microstructure evolution in the material, are determined by the last two corrector equations, cf. Remark \ref{rem:1214}.  
In particular, we obtain that, as $\varepsilon\to 0$, 
\begin{equation*}
\unf(u_{\varepsilon}, q_{\varepsilon})\rightharpoonup \brac{u_0+\chi_1, q_0+\chi_2} \quad \text{weakly in } L^2_{\nu}(\R;(L^2(Q)\otimes L^2(\Omega))^{3} \oplus (L^2(Q)\otimes L^2(\Omega))^{3}).
\end{equation*}
A similar result in a periodic-stochastic setting has been obtained in \cite{tachago2017stochastic}. With our operator-theoretic approach we are able to dispose of the continuity condition on the coefficients in the slow variable. Next, we can treat highly oscillatory mixed type equations by only requiring the sum of $\sigma_0$ and $\eta_0$ to be positive. Moreover, for suitably regular right-hand sides $(f,g)$, we obtain the following corrector result:
\begin{equation*}
\| u_{\varepsilon} - \mathcal{T}_{-\varepsilon}\chi_1 - u_0 \|^2_{L^2_{\nu}(\R; L^2(Q)\otimes L^2(\Omega)^3)} + \| q_{\varepsilon}-\mathcal{T}_{-\varepsilon}\chi_2 - q_0 \|^2_{L^2_{\nu}(\R; L^2(Q)\otimes L^2(\Omega)^3)} \to 0 \quad \text{as }\varepsilon\to 0.
\end{equation*}
See Section \ref{sec:maxwell} for details.

\section{The operator-theoretic setting for unfolding}\label{s:gr}

In this section we introduce the setting for abstract two-scale convergence and provide some compactness results which will be useful in the following sections. Throughout this section, we let $H_d$ and $H_s$ be Hilbert spaces, $m,n\in\mathbb{N}$; `$d$' and `$s$' are a reminder of `deterministic' and `stochastic'. 

\paragraph{Abstract ``differential'' operators.}
We consider densely defined closed linear operators
\[ 
   {C}_d\colon \dom(C_d)\subseteq H_d^m\to H_d^n ,\qquad {C}_s\colon \dom(C_s)\subseteq H_s^m\to H_s^n.
\]
Given a Hilbert space $H$ the canonical extension of $C_d$ to operators from the Hilbert space tensor product $H_d^m\otimes H$ to $H_d^n\otimes H$ will be---as a rule---again denoted by $C_d$; and similarly for $C_s$. In the applications, we have in mind, we extend $C_d$ to the space $H_d^m\otimes H_s$ attaining values in $H_d^n\otimes H_s$ and $C_s$ as closed, densely defined linear operator from $H_d\otimes H_s^m$ to $H_d\otimes H_s^n$. Note that we will further identify $H_d^m\otimes H_s=(H_d\otimes H_s)^m=H_d\otimes H_s^m$. Thus, the extended operators $C_d$ and $C_s$ are both operators defined on (subsets of) $(H_d\otimes H_s)^m$ with values in $(H_d\otimes H_s)^n$.
With this in mind, we require the following compatibility conditions for $C_s$ and $C_d$:
\begin{alignat}{2}
\begin{aligned}\label{eq:core}
 \exists D_1\subseteq H_d \text{ dense }\forall x\in D_1, y\in H_s^m\colon x\otimes y \in \dom(C_d), \\ \exists D_2\subseteq H_d \text{ dense }\forall x\in D_2, y\in H_s^n\colon x\otimes y \in \dom(C_d^*), \\
 \exists E_1\subseteq H_s \text{ dense }\forall y\in E_1, x\in H_d^m\colon x\otimes y \in \dom(C_s), \\ \exists E_2\subseteq H_s \text{ dense }\forall y\in E_2, x\in H_d^n\colon x\otimes y \in \dom(C_s^*).
\end{aligned}
\end{alignat}
The first and third conditions are trivially satisfied, if $m=1$, the second and fourth if $n=1$.
In applications discussed later on, $D_1=D_2=C_c^\infty(Q)$ for some open $Q\subseteq \mathbb{R}^n$ (a similar choice can be made for $E_1$, $E_2$).

\begin{remark}
Note the following consequence of our notation convention. Given a bounded linear operator $T\in L(H,K)$ for $H,K$ Hilbert spaces, we have $TC_d\subseteq C_dT,\text{ and }TC_s\subseteq C_sT$, see also Lemma \ref{lem:TPPT} below.
\end{remark}

\paragraph{Unfolding family.} We call a \textit{strongly continuous} map $\mathcal{T}\colon \mathbb{R}\setminus\{0\} \to L(H_d\otimes H_s)$ taking values in the set of \textit{unitary operators} an \textit{unfolding family}, if
the following structural hypotheses are satisfied  (where the action of $\mathcal{T}_\epsilon$ on powers of $H_d\otimes H_s$  is understood ``component-wise'', re-using the notation):
\begin{align}
  \mathcal{T}_\epsilon=\mathcal{T}_{-\epsilon}^{-1}&\quad(\varepsilon\in\mathbb R\setminus\{0\}),\label{eq:inverse}\\ 
   \epsilon C_d \mathcal{T}_{-\varepsilon}\phi = \epsilon \mathcal{T}_{-\varepsilon} C_d\phi + C_s \mathcal{T}_{-\varepsilon}\phi &\quad (\phi\in \dom(C_d)\cap \dom(C_s)\subseteq (H_d\otimes H_s)^m,\varepsilon \in\mathbb{R}\setminus\{0\}), \label{eq:com1} \\
\label{eq:com2}
   \mathcal{T}_{\varepsilon}C_s\subseteq C_s\mathcal{T}_{\varepsilon}&\quad(\varepsilon \in\mathbb{R}\setminus\{0\}),\\
\label{eq:inv}
  \mathcal{T}_{\varepsilon}v=v&\quad(v\in \kar(C_s),\varepsilon \in\mathbb{R}\setminus\{0\}). 
\end{align}
We remark here that \eqref{eq:com1} (in conjunction with \eqref{eq:core}) implies,
\begin{equation}\label{eq:com1b}
 \epsilon C_d^*\mathcal{T}_{-\epsilon} \phi = \epsilon \mathcal{T}_{-\epsilon}C_d^*\phi+\mathcal{T}_{-\epsilon}C_s^*\phi\quad(\phi\in\dom(C_s^*)\cap\dom(C_d^*)\subseteq (H_d\otimes H_s)^n, \epsilon\in\mathbb{R}\setminus\{0\}).
\end{equation}
The latter will be particularly important, when we discuss time-dependent problems.
\begin{remark}\label{example:336}
  The stochastic unfolding operator introduced in Section \ref{sec:mot} together with $C_d=\nabla$, and $C_s$ denoting the stochastic gradient, satisfies the above assumptions (see Section \ref{sec:exam}).
\end{remark}
Within the above setting we define (stochastic) 2-scale convergence as follows.

\begin{definition}
  Let $(u_\varepsilon)_{\varepsilon>0}$ in $H_d\otimes H_s$. Then $(u_\varepsilon)_\varepsilon$ is said to \emph{strongly (weakly) 2-scale converge to $u\in H_d\otimes H_s$} (we also use the notation $u_{\varepsilon}\overset{2}{\to}u$ ($u_{\varepsilon}\overset{2}{\rightharpoonup}u$)), if
  \[
     \mathcal{T}_\varepsilon u_\varepsilon \to u \quad(\mathcal{T}_\varepsilon u_\varepsilon \rightharpoonup u)
  \]strongly (weakly) as $\epsilon\to 0$. 
\end{definition}

\begin{remark}
  Strictly speaking 2-scale convergence is only defined for families
  $(u_\varepsilon)_{\varepsilon>0}$. In the following, we will however
  also say that ``a subsequence of $(u_\varepsilon)_{\varepsilon>0}$
  (weakly/strongly) 2-scale converges to some $u$''. By this, we mean
  that there exists a sequence $(\epsilon_k)_k$ in $(0,\infty)$
  converging to $0$ such that
  $\mathcal{T}_{\epsilon_k}u_{\epsilon_k}\to u$ weakly as $k\to
  \infty$. As the particular subsequence will not be important in the
  considerations we are aiming for, we shall however re-use $\epsilon$ and dispense with $k$.
\end{remark}

In the following we establish various properties that we shall exploit in our abstract homogenization scheme, and highlight analogies to stochastic two-scale convergence in the mean and periodic unfolding, respectively. We begin with an abstract counter part of \cite[Theorem 3.7]{Bourgeat1994}. In order to avoid cluttered notation as much as possible, we often write $H_d\otimes H_s$ regardless of the number of components of the objects under consideration. We begin with an auxiliary result.

Unless explicitly stated otherwise, we shall \emph{not} assume condition \eqref{eq:inv} in this section. The conditions \eqref{eq:core}--\eqref{eq:com2}, however, are assumed to be in effect.

\begin{lemma}\label{lem:TPPT} Let $P$ be the orthogonal projection onto $\kar(C_s)\subseteq (H_d\otimes H_s)^m$. Then $P\mathcal{T}_\epsilon=\mathcal{T}_\epsilon P$ for all $\epsilon\in\mathbb{R}\setminus\{0\}$.
\end{lemma}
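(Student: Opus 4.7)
The plan is to deduce the commutation $P\mathcal{T}_\epsilon=\mathcal{T}_\epsilon P$ from the fact that $\mathcal{T}_\epsilon$ is unitary and leaves $\kar(C_s)$ invariant, which is essentially the content of \eqref{eq:com2} together with \eqref{eq:inverse}. The only subtlety is making sure we use \eqref{eq:com2} both for $\epsilon$ and for $-\epsilon$ in order to get invariance of $\kar(C_s)$ as a \emph{bijection}, not just as a one-sided inclusion.

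First I would observe that, since $\mathcal{T}_{-\epsilon}=\mathcal{T}_\epsilon^{-1}$ by \eqref{eq:inverse}, the hypothesis \eqref{eq:com2} applied with parameter $\epsilon$ (and also with $-\epsilon$) directly gives that $\mathcal{T}_\epsilon\kar(C_s)\subseteq \kar(C_s)$ and $\mathcal{T}_{-\epsilon}\kar(C_s)\subseteq \kar(C_s)$: indeed, for $v\in\kar(C_s)$, $C_s\mathcal{T}_\epsilon v=\mathcal{T}_\epsilon C_s v=0$. Combining both inclusions yields $\mathcal{T}_\epsilon\kar(C_s)=\kar(C_s)$.

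Next, I would use unitarity of $\mathcal{T}_\epsilon$ to transfer the invariance to the orthogonal complement. For $w\in\kar(C_s)^\perp$ and any $v\in\kar(C_s)$,
\[
\sp{\mathcal{T}_\epsilon w}{v}=\sp{w}{\mathcal{T}_\epsilon^{*}v}=\sp{w}{\mathcal{T}_{-\epsilon}v}=0,
\]
since $\mathcal{T}_{-\epsilon}v\in\kar(C_s)$ by the previous step. Hence $\mathcal{T}_\epsilon\kar(C_s)^\perp\subseteq\kar(C_s)^\perp$. (All of this goes through component-wise on $(H_d\otimes H_s)^m$, in agreement with the convention stated before \eqref{eq:core}.)

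Finally I would conclude: given $u\in(H_d\otimes H_s)^m$, decompose $u=Pu+(I-P)u$ with $Pu\in\kar(C_s)$ and $(I-P)u\in\kar(C_s)^\perp$. Applying $\mathcal{T}_\epsilon$ and using the two invariance properties, $\mathcal{T}_\epsilon Pu\in\kar(C_s)$ while $\mathcal{T}_\epsilon(I-P)u\in\kar(C_s)^\perp$, so by uniqueness of the orthogonal decomposition $P\mathcal{T}_\epsilon u=\mathcal{T}_\epsilon Pu$. The main (and really only) obstacle is a conceptual one rather than technical: one must notice that \eqref{eq:com2} alone gives only an inclusion and that both directions (equivalently, invariance of $\kar(C_s)^\perp$) are needed, for which unitarity of $\mathcal{T}_\epsilon$ together with \eqref{eq:inverse} are the right ingredients.
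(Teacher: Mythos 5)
Your proof is correct, but it takes a genuinely different (and more elementary) route than the paper's. The paper first upgrades \eqref{eq:com2} and \eqref{eq:inverse} to the double inclusion
\[
\mathcal{T}_\epsilon C_s^*C_s \subseteq C_s^*\mathcal{T}_\epsilon C_s \subseteq C_s^*C_s\,\mathcal{T}_\epsilon,
\]
and then invokes the spectral (functional) calculus for the self-adjoint operator $C_s^*C_s$: since a unitary operator that intertwines with $C_s^*C_s$ in this way commutes with every bounded measurable function of it, it commutes in particular with $\chi_{\{0\}}(C_s^*C_s)$, which is the orthogonal projection onto $\kar(C_s^*C_s)=\kar(C_s)=\rge(P)$. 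You bypass the functional calculus entirely: you show directly from \eqref{eq:com2} (applied at both $\epsilon$ and $-\epsilon$) that $\mathcal{T}_\epsilon$ leaves $\kar(C_s)$ invariant, transfer this to the orthogonal complement via unitarity, and then read off $P\mathcal{T}_\epsilon=\mathcal{T}_\epsilon P$ from uniqueness of the orthogonal decomposition. Your argument is shorter and more transparent for this specific statement; the paper's spectral-theoretic argument yields more for free (commutation with all $f(C_s^*C_s)$), which is a useful by-product even if not needed here. Both are complete and correct; you also correctly identified the key subtlety, namely that invariance of $\kar(C_s)^\perp$ requires the ``$-\epsilon$'' direction of \eqref{eq:com2} together with \eqref{eq:inverse}.
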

\begin{proof}
  Let $\epsilon\in\mathbb{R}\setminus\{0\}$. Then we have $\mathcal{T}_\epsilon C_s\subseteq C_s \mathcal{T}_\epsilon$ by (\ref{eq:com2}). Since $\unf^*= \invunf$ by (\ref{eq:inverse}), we have $C_s^*\mathcal{T}_{-\epsilon}=(\mathcal{T}_\epsilon C_s)^*\supseteq (C_s\mathcal{T}_\epsilon)^*=\mathcal{T}_{-\epsilon}C_s^*$. Hence, we deduce for all $\epsilon\in\mathbb{R}\setminus\{0\}$
  \[
     \mathcal{T}_{\epsilon} C_s^*C_s \subseteq  C_s^*\mathcal{T}_{\epsilon} C_s\subseteq C_s^*C_s \mathcal{T}_{\epsilon}.
  \]
  Since $C_s^*C_s$ is self-adjoint and $\mathcal{T}_\varepsilon$ continuous, we deduce that for all bounded, measurable functions $f\colon \sigma(C_s^*C_s)\to \mathbb{R}$, $\mathcal{T}_{\epsilon} f(C_s^*C_s)=f(C_s^*C_s)\mathcal{T}_\epsilon$. In particular, $f=\chi_{\{0\}}$ is a possible choice.  $\chi_{\{0\}}(C_s^*C_s)$ is the orthogonal projection onto $\kar(C_s^*C_s)$. Note that $\kar(C_s^*C_s)=\kar(C_s)$. Indeed, $\kar(C_s)\subseteq \kar(C_s^*C_s)$ is trivial; for $\phi\in \kar(C_s^*C_s)$, we test the equation $C_s^*C_s\phi=0$ with $\phi\in \kar(C_s^*C_s)\subseteq\dom(C_s^*C_s)\cap \dom (C_s)$ to obtain $\langle C_s\phi,C_s\phi\rangle = 0$, that is, $\kar(C_s)\supseteq \kar(C_s^*C_s)$.  We infer $P=\chi_{\{0\}}(C_s^*C_s)$, which yields the assertion.
\end{proof}

\begin{lemma}\label{lem:proj} Let $K\subseteq H_s$ be a closed subspace and $P$ the projection onto $K$. Then $PC_d\subseteq C_dP$.
\end{lemma}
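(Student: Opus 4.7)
By the notational convention stated just before the lemma (and used throughout Section \ref{s:gr}), the projection $P$ on $H_s$ is reused to denote its canonical extension $I_{H_d^m}\otimes P$ on $H_d^m\otimes H_s$ (and $I_{H_d^n}\otimes P$ on $H_d^n\otimes H_s$). The inclusion $PC_d\subseteq C_dP$ is therefore a commutation statement saying that $C_d$, which acts nontrivially only on the $H_d$-factor, passes through a projection acting only on the $H_s$-factor. My strategy is the standard one: verify the commutation on simple tensors (where everything is explicit) and then extend by a closure argument, using that $P$ is continuous and $C_d$ is closed.

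\textbf{Step 1 (commutation on the algebraic tensor product).} Let $z\in\dom(C_d)\subseteq H_d^m$ (the unextended operator) and $w\in H_s$. By the very definition of the canonical extension $C_d$ to the tensor product,
\[
   C_d(z\otimes w)=(C_dz)\otimes w,
\]
and hence
\[
   P C_d(z\otimes w)=(C_dz)\otimes Pw=C_d(z\otimes Pw)=C_dP(z\otimes w).
\]
By linearity this yields $PC_d\phi=C_dP\phi$ for every $\phi$ in the algebraic tensor product $\dom(C_d)\otimes_{\text{alg}}H_s$, which is a core for the extended $C_d$.

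\textbf{Step 2 (closure).} Let $\phi\in\dom(C_d)$. By the coreness just mentioned, choose a sequence $(\phi_n)$ in $\dom(C_d)\otimes_{\text{alg}}H_s$ with $\phi_n\to\phi$ and $C_d\phi_n\to C_d\phi$ in the respective tensor product norms. Continuity of $P$ yields $P\phi_n\to P\phi$, and Step 1 gives
\[
   C_dP\phi_n=PC_d\phi_n\to PC_d\phi.
\]
Since $C_d$ is closed, it follows that $P\phi\in\dom(C_d)$ and $C_dP\phi=PC_d\phi$, which is exactly $PC_d\subseteq C_dP$.

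\textbf{Main obstacle.} The only genuinely delicate point is that the passage from simple tensors to arbitrary elements of $\dom(C_d)$ requires the algebraic tensor product $\dom(C_d)\otimes_{\text{alg}}H_s$ to be a core for the canonical extension. This is built into the definition of the extension (as the closure of $C_d\otimes I$), so no extra work is needed; beyond that, the proof is a short exercise in closedness.
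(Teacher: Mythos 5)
Your proof is correct, and it takes the same approach the paper does — the paper simply cites "standard tensor-product theory of operators in Hilbert spaces" (referring to the Appendix of \cite{W11_P}), whereas you unpack that citation into the explicit two-step argument (verify commutation on simple tensors, then extend to the closure using boundedness of $P$ and closedness of $C_d$). Since the canonical extension of $C_d$ is by definition the closure of its action on the algebraic tensor product, your core observation in Step 2 is indeed automatic, and the argument is complete.
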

\begin{proof}
 We have identified $C_d$ with $C_d\otimes 1_{H_s}$ and $P$ with $1_{H_d}\otimes P$. Thus, the assertion follows from standard tensor-product theory of operators in Hilbert spaces, see, e.g., \cite[Appendix]{W11_P}.
\end{proof}

\begin{lemma}\label{thm:bas2} Let $(u_\epsilon)_\epsilon$ and $(\epsilon C_d u_\varepsilon)_\epsilon$ be bounded in $(H_d\otimes H_s)^m$ and $(H_d\otimes H_s)^n$, respectively. Then there are subsequences of $(u_\epsilon)_\epsilon$ and $(\epsilon C_d u_\varepsilon)_\epsilon$ as well as $u\in \dom(C_s)$ such that $(u_\epsilon)_\epsilon$ weakly 2-scale converges to $u$ and $(\epsilon C_d u_\varepsilon)_\epsilon$ weakly 2-scale converges to $C_su$.
\end{lemma}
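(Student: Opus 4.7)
The plan is to extract weakly convergent subsequences using the unitarity of $\mathcal{T}_\varepsilon$, and then to use the commutator identity \eqref{eq:com1b} to transfer the derivative $\varepsilon C_d$ from $u_\varepsilon$ onto a test function, very much in the spirit of Nguetseng's classical two-scale compactness result.

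First, since every $\mathcal{T}_\varepsilon$ is unitary, the sequences $(\mathcal{T}_\varepsilon u_\varepsilon)_\varepsilon$ and $(\mathcal{T}_\varepsilon(\varepsilon C_d u_\varepsilon))_\varepsilon$ are bounded in $(H_d\otimes H_s)^m$ and $(H_d\otimes H_s)^n$, respectively. I extract a subsequence (not relabeled) along which
\[
\mathcal{T}_\varepsilon u_\varepsilon \rightharpoonup u, \qquad \mathcal{T}_\varepsilon(\varepsilon C_d u_\varepsilon) \rightharpoonup w,
\]
so that the conclusion reduces to showing $u\in\dom(C_s)$ together with $C_s u = w$.

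Next, I fix a test function $\psi\in\dom(C_d^*)\cap\dom(C_s^*)$. By \eqref{eq:com1b}, $\mathcal{T}_{-\varepsilon}\psi\in\dom(C_d^*)$ and
\[
\varepsilon C_d^*\mathcal{T}_{-\varepsilon}\psi \;=\; \varepsilon\,\mathcal{T}_{-\varepsilon}C_d^*\psi + \mathcal{T}_{-\varepsilon}C_s^*\psi.
\]
Using $\mathcal{T}_\varepsilon^{*}=\mathcal{T}_{-\varepsilon}$ (from \eqref{eq:inverse}) and the defining adjoint relation, I compute
\begin{align*}
\langle w,\psi\rangle
&= \lim_{\varepsilon\to 0}\langle \varepsilon C_d u_\varepsilon,\mathcal{T}_{-\varepsilon}\psi\rangle
 = \lim_{\varepsilon\to 0}\langle u_\varepsilon,\varepsilon C_d^*\mathcal{T}_{-\varepsilon}\psi\rangle \\
&= \lim_{\varepsilon\to 0}\Bigl(\varepsilon\langle \mathcal{T}_\varepsilon u_\varepsilon,C_d^*\psi\rangle + \langle \mathcal{T}_\varepsilon u_\varepsilon,C_s^*\psi\rangle\Bigr)
 = \langle u,C_s^*\psi\rangle,
\end{align*}
the first summand vanishing as $\varepsilon$ times a bounded quantity and the second converging by weak convergence against the fixed vector $C_s^*\psi$.

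It then remains to extend this identity from $\dom(C_d^*)\cap\dom(C_s^*)$ to all $\psi\in\dom(C_s^*)$, after which $u\in\dom((C_s^*)^{*})=\dom(C_s)$ (by closedness of $C_s$) with $C_s u = w$, as claimed. For this step I invoke \eqref{eq:core}: simple tensors $x\otimes \vec y$ with $x\in D_2$ and $\vec y$ in the non-extended $\dom(C_s^*)\subseteq H_s^n$ lie in $\dom(C_d^*)\cap\dom(C_s^*)$, and by standard tensor-product theory for closed operators (as cited in the proof of Lemma \ref{lem:proj}) their algebraic span is a core for the extended $C_s^*$. Hence for any $\psi\in\dom(C_s^*)$ one selects approximants $\psi_k\to\psi$ with $C_s^*\psi_k\to C_s^*\psi$ and passes to the limit in the identity $\langle w,\psi_k\rangle = \langle u,C_s^*\psi_k\rangle$. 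This core-approximation step is the main subtlety of the proof; the remainder is a routine exploitation of the commutator \eqref{eq:com1b} together with the unitarity of $\mathcal{T}_\varepsilon$.
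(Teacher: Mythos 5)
Your proof is correct and follows essentially the same route as the paper: extract weakly convergent subsequences using unitarity of $\mathcal{T}_\varepsilon$, apply the dual commutator identity \eqref{eq:com1b} to move $\varepsilon C_d^*$ across $\mathcal{T}_{-\varepsilon}$ onto a test vector $\psi\in\dom(C_d^*)\cap\dom(C_s^*)$, pass to the limit, and then upgrade $\psi$ to a general element of $\dom(C_s^*)$ via the core property supplied by \eqref{eq:core} and closedness of $C_s$. The only difference is cosmetic: you spell out the tensor-product core argument that the paper dispatches in one line.
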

\begin{proof}
There exist subsequences such that $(\mathcal{T}_\varepsilon u_\epsilon)_\epsilon$ weakly  converges to $u\in (H_d\otimes H_s)^m$ and $(\epsilon \mathcal{T}_\epsilon C_d u_\varepsilon)_\epsilon$ weakly converges to $v\in (H_d\otimes H_s)^n$, respectively. We will show that $u\in \dom(C_s)$ and $C_s u=v$. For this, we let $g\in \dom(C_s^*)\cap \dom(C_d^*)$ and compute (using (\ref{eq:com1b}))
 \begin{align*}
   \langle \epsilon \mathcal{T}_\epsilon C_d u_\epsilon, g\rangle & =
      \langle   u_\epsilon,\epsilon C_d^*\mathcal{T}_{-\epsilon} g\rangle \\
      & = \langle   u_\epsilon,\epsilon \mathcal{T}_{-\epsilon} C_d^*g+\mathcal{T}_{-\epsilon} C_s^*g\rangle 
   \\ & = \langle \epsilon  \mathcal{T}_\epsilon u_\epsilon, C_d^*g\rangle + \langle  \mathcal{T}_\epsilon u_\epsilon, C_s^*g\rangle
   \\ & \to \langle u,C_s^*g\rangle.
 \end{align*}
Since $\dom(C_s^*)\cap \dom(C_d^*)$ is an operator core for $C_s^*$ by \eqref{eq:core}, we obtain $u\in \dom(C_s)$ and $C_su=(\textrm{w-})\lim_{\epsilon\to 0}\epsilon \mathcal{T}_\epsilon C_d u_\epsilon$. 
\end{proof}

\begin{theorem}[Compactness]\label{prop:wo3} Assume \eqref{eq:core}--\eqref{eq:com2}.

(a) Let $(u_\epsilon)_\epsilon$ be uniformly bounded in $\dom(C_d)$. Assume that $(u_\epsilon)_\epsilon$ and $(C_du_\epsilon)_\epsilon$ weakly 2-scale converge. Then there exist $u\in \dom(C_d)\cap \kar(C_s)$ and $v\in \overline{\rge}(C_s)$ such that $u_\epsilon\stackrel{2}{\rightharpoonup} u$ and $C_d u_\epsilon \stackrel{2}{\rightharpoonup} C_d u+v$.

(b) Let $(q_\epsilon)_\epsilon$ be uniformly bounded in $\dom(C_d^*)$. Assume that $(q_\epsilon)_\epsilon$ and $(C_d^*q_\epsilon)_\epsilon$ weakly 2-scale converge. Then there exist $q\in \dom(C_d^*)\cap \kar(C_s^*)$ and $w\in \overline{\rge}(C_s^*)$ such that $q_\epsilon\stackrel{2}{\rightharpoonup} q$ and $C_d^* q_\epsilon \stackrel{2}{\rightharpoonup} C_d^* q+w$.

(c) Assume, in addition, $m=1$ and that \eqref{eq:inv} holds. Let $(u_\varepsilon)_\epsilon$ be uniformly bounded in $\dom(C_d)$. Then there exists $u\in\dom(C_d)\cap \kar(C_s)$ and $v\in \overline{\rge}(C_s)\cap \overline{\rge}(C_s^*)^n\subseteq (H_d\otimes H_s)^n$ such that (a subsequence of) $(u_\varepsilon)_\epsilon$ weakly 2-scale converges to $u$ and $(C_d u_\varepsilon)_\epsilon$ weakly 2-scale converges to $C_du+v$. 
\end{theorem}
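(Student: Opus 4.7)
The plan is to deduce (c) from (a) by using assumption \eqref{eq:inv} together with the tensor-product structure of the operators to upgrade the membership $v\in\overline{\rge}(C_s)$ by the orthogonality $v\perp\kar(C_s)^n$ in $(H_d\otimes H_s)^n$. Since $(u_\varepsilon)_\varepsilon$ is uniformly bounded in $\dom(C_d)$ and each $\mathcal{T}_\varepsilon$ is unitary, the families $(\mathcal{T}_\varepsilon u_\varepsilon)$ and $(\mathcal{T}_\varepsilon C_d u_\varepsilon)$ are bounded in Hilbert spaces; along a subsequence both $u_\varepsilon$ and $C_d u_\varepsilon$ therefore weakly $2$-scale converge, and part (a) already furnishes $u\in\dom(C_d)\cap\kar(C_s)$ together with $v\in\overline{\rge}(C_s)$ satisfying $u_\varepsilon\stackrel{2}{\rightharpoonup}u$ and $C_d u_\varepsilon\stackrel{2}{\rightharpoonup}C_d u+v$. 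Since $m=1$ we have $\overline{\rge}(C_s^*)^n=(\kar(C_s)^n)^\perp$ in $(H_d\otimes H_s)^n$, so the only task left is to show $\langle v,w\rangle=0$ for all $w\in\kar(C_s)^n$.

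Passing to a further subsequence I may assume also $u_\varepsilon\rightharpoonup\tilde u$ and $C_d u_\varepsilon\rightharpoonup\tilde v$ in the ordinary weak topology; closedness of $C_d$ yields $\tilde u\in\dom(C_d)$ and $\tilde v=C_d\tilde u$. Let $P$ denote the orthogonal projection of $H_d\otimes H_s$ onto $\kar(C_s)$. For any $\phi\in\kar(C_s)$ assumption \eqref{eq:inv} gives $\mathcal{T}_\varepsilon\phi=\phi$, so
\[
\langle u_\varepsilon,\phi\rangle=\langle\mathcal{T}_\varepsilon u_\varepsilon,\mathcal{T}_\varepsilon\phi\rangle=\langle\mathcal{T}_\varepsilon u_\varepsilon,\phi\rangle\longrightarrow\langle u,\phi\rangle,
\]
while the same quantity also tends to $\langle\tilde u,\phi\rangle$, i.e.\ $P\tilde u=u$. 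Likewise, applying \eqref{eq:inv} componentwise to $w\in\kar(C_s)^n$ gives $\mathcal{T}_\varepsilon w=w$, hence
\[
\langle C_d u+v,w\rangle=\lim_{\varepsilon\to 0}\langle\mathcal{T}_\varepsilon C_d u_\varepsilon,w\rangle=\lim_{\varepsilon\to 0}\langle C_d u_\varepsilon,w\rangle=\langle C_d\tilde u,w\rangle,
\]
so $\langle v,w\rangle=\langle C_d(\tilde u-u),w\rangle$, and the whole argument reduces to showing $C_d(\tilde u-u)\in\overline{\rge}(C_s^*)^n$.

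The main technical point is therefore the commutation relation $PC_d\subseteq C_dP$: granted this, $C_d u=C_dP\tilde u=PC_d\tilde u$, so $C_d(\tilde u-u)=(I-P)C_d\tilde u$ has each component in $\kar(C_s)^\perp=\overline{\rge}(C_s^*)$, which completes the proof. To obtain it, I would invoke the canonical tensor-product extension used throughout the paper: $C_s$ on $H_d\otimes H_s$ is induced by the original operator $C_s\colon\dom(C_s)\subseteq H_s\to H_s^n$, hence $\kar(C_s)=H_d\otimes\kar_0$ with $\kar_0\subseteq H_s$ the original kernel, and correspondingly $P=1_{H_d}\otimes P_0$ with $P_0$ the $H_s$-projection onto $\kar_0$. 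This places $P$ exactly in the setting of Lemma~\ref{lem:proj}, yielding $PC_d\subseteq C_dP$, and the inclusion extends componentwise to $(H_d\otimes H_s)^n$. An alternative intrinsic route is to imitate the proof of Lemma~\ref{lem:TPPT}: since $C_d$ and the selfadjoint operator $C_s^*C_s$ act on orthogonal tensor factors they commute, so $C_d$ commutes with the spectral projection $\chi_{\{0\}}(C_s^*C_s)=P$.
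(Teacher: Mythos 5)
Your proof is correct and takes essentially the same approach as the paper's: it extracts the ordinary weak limit $\tilde u$ of $(u_\epsilon)_\epsilon$, uses the invariance \eqref{eq:inv} to obtain $u=P\tilde u$, and uses the commutation $PC_d\subseteq C_dP$ from Lemma~\ref{lem:proj}. Invoking part~(a) for $v\in\overline{\rge}(C_s)$ and then testing $v$ directly against all of $\kar(C_s)^n$ via $\langle v,w\rangle=\langle(I-P)C_d\tilde u,w\rangle=0$ is a mild streamlining of the paper's final step (the paper re-derives the $\overline{\rge}(C_s)$ membership and tests against the dense subset $\kar(C_s)^n\cap\dom(C_d^*)$ by an integration-by-parts identity), but it is not a different strategy.
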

\begin{proof}
(a) For suitable subsequences, denote by $u$ and $\overline{v}$ the corresponding weak 2-scale-limits of $(u_\epsilon)_\epsilon$ and $(C_du_\epsilon)_\epsilon$. By Lemma \ref{thm:bas2}, we deduce that $u\in \kar(C_s)$. By weak continuity of $P$, it is easy to see that $(P\mathcal{T}_\epsilon u_\epsilon)_\epsilon$ weakly converges to $Pu=u$. By Lemma \ref{lem:TPPT}, we have $P\mathcal{T}_\epsilon=\mathcal{T}_\epsilon P$. Thus, since $PC_d\subseteq C_dP$, we get $P u_\epsilon \in \dom(C_d)\cap \kar(C_s)\subseteq \dom(C_d)\cap \dom(C_s)$ and so, by \eqref{eq:com1},
 \[
   C_d P \mathcal{T}_\epsilon u_\epsilon = \mathcal{T}_\epsilon C_d Pu_\epsilon = \mathcal{T}_\epsilon PC_d u_\epsilon.
 \]
  Thus, since the right-hand side is uniformly bounded in $\epsilon>0$, so is the left-hand side. From the closedness of $C_dP$, it follows that $u=Pu\in \dom(C_d)$.
  
  We define $v\coloneqq \overline{v}-C_d u$. Then we compute for suitable $\epsilon>0$ and all $w\in \kar(C_s^*)\cap \dom(C_d^*)$:
\begin{equation*}
   \langle \mathcal{T}_\epsilon C_d u_\epsilon,w\rangle = \langle  \mathcal{T}_\epsilon u_\epsilon,C_d^* w\rangle.
\end{equation*}
 Letting $\epsilon\to 0$ on both sides, we obtain
 \[
    \langle C_d u + v, w\rangle = \langle  C_d u, w\rangle,
 \]
 which leads to 
 \[
    \langle v,w\rangle = 0 \quad(w\in \kar(C_s^*)\cap \dom(C_d^*)).
 \]
 Since $\kar(C_s^*)\cap \dom(C_d^*)$ is dense in $\kar(C_s^*)$ by \eqref{eq:core}, we infer $v\in \kar(C_s^*)^{\bot}=\overline{\rge}(C_s)$.

(b) By symmetry of the conditions \eqref{eq:core}--\eqref{eq:com2} in $C_d$ and $C_d^*$, the proof follows analogously to (a).

(c) Choose subsequences of $(\mathcal{T}_\epsilon u_\epsilon)_{\epsilon}$ and $(\mathcal{T}_\epsilon C_du_\epsilon)_\epsilon$ that weakly converge to some $u$ and $\overline{v}$. By Lemma \ref{thm:bas2}, we deduce that $C_s u = (\textrm{w-})\lim_{\epsilon\to 0}\epsilon \mathcal{T}_\epsilon C_d u_\epsilon = 0\cdot \overline{v}$, which yields $u\in\kar(C_s)$. Moreover, we may choose a weakly convergent subsequence of $(u_\epsilon)_{\epsilon}$ in $\dom (C_d)$. Denote the limit by $\tilde u$. Let $P$ be the orthogonal projection onto $\kar(C_s)$. As $m=1$, $\kar(C_s)\subseteq H_s$ and Lemma \ref{lem:proj} yields $PC_d\subseteq C_d P$. In particular, we obtain that $(Pu_\epsilon)_\epsilon$ weakly converges to $P\tilde u$ in $\dom(C_d)$. Furthermore, by $\mathcal{T}_\epsilon v=v$ on $\kar(C_s)$ (see \eqref{eq:inv}), we infer $\mathcal{T}_\epsilon P = P$. Moreover, from Lemma \ref{lem:TPPT}, we have $\mathcal{T}_\epsilon P = P\mathcal{T}_\epsilon$. Thus, we get that
\[
   u = P\textrm{w-}\lim_{\epsilon\to 0} \mathcal{T}_\epsilon u_\epsilon=\textrm{w-}\lim_{\epsilon\to 0}P \mathcal{T}_\epsilon u_\epsilon = \textrm{w-}\lim_{\epsilon\to 0} \mathcal{T}_\epsilon P u_\epsilon = \textrm{w-}\lim_{\epsilon\to 0} P u_\epsilon = P\tilde u\in \dom(C_d).
\]
We define $v \coloneqq \overline{v}-C_d u$. Then we compute for suitable $\epsilon>0$ and all $w\in \kar(C_s^*)\cap \dom(C_d^*)$:
\begin{equation*}
   \langle \mathcal{T}_\epsilon C_d u_\epsilon,w\rangle  = \langle u_\epsilon, \mathcal{T}_{-\epsilon} C_d^*w+ \frac{1}{\epsilon}\mathcal{T}_{-\epsilon}C_s^* w\rangle = \langle  \mathcal{T}_\epsilon u_\epsilon,C_d^* w\rangle.
\end{equation*}
 Letting $\epsilon\to 0$ on both sides, we obtain
 \[
    \langle C_d u + v, w\rangle = \langle  C_d u, w\rangle,
 \]
 which leads to 
 \[
    \langle v,w\rangle = 0 \quad(w\in \kar(C_s^*)\cap \dom(C_d^*)).
 \]
 Since $\kar(C_s^*)\cap \dom(C_d^*)$ is dense in $\kar(C_s^*)$ by \eqref{eq:core}, we infer $v\in \kar(C_s^*)^{\bot}=\overline{\rge}(C_s)$.
 
 Finally, we let $w\in \kar(C_s)\cap \dom(C_d^*)$. Then, using $\mathcal{T}_\epsilon v=v$ for all $v\in \kar(C_s)$, we have for suitable $\epsilon>0$
 \begin{equation*}
   \langle \mathcal{T}_\epsilon C_d u_\epsilon, w\rangle  = \langle C_du_\epsilon,w\rangle = \langle u_\epsilon, C_d^*w\rangle = \langle u_\epsilon, \mathcal{T}_{-\epsilon}C_d^*w\rangle =\langle  \mathcal{T}_{\epsilon}u_\epsilon, C_d^*w\rangle.
 \end{equation*}
 Letting $\epsilon\to 0$, we get 
 \[
    \langle C_d u + v, w\rangle = \langle u,C_d^*w\rangle = \langle C_d u,w\rangle.
 \]
 Hence, since $\dom(C_d^*)\cap \kar(C_s)$ is dense in $\kar(C_s)$ as $m=1$, we deduce
 \[
   \langle v,w\rangle = 0 \quad(w\in\kar(C_s)),
 \]
 which leads to $v\in \overline{\rge}(C_s^*)$.
\end{proof}
\section{Homogenization of (abstract) elliptic problems}\label{sec:4}

In order to illustrate our so far findings, we shall treat an elliptic homogenization problem. Note that this is the abstract variant of the classical result \cite[Theorem 4.1.1]{Bourgeat1994} (see Section \ref{sec:elliptic} for the periodic case). We assume throughout that $m=1$ and that \eqref{eq:core}--\eqref{eq:inv}.

\begin{theorem}\label{thm:ellhom} Let $f\in \kar(C_s)$ and assume that $\rge(C_d)$ is closed and $C_d$ is injective. Let $A \in L((H_d\otimes H_s)^n)$ satisfy $\Re A = \frac{1}{2}(A+A^*)\geq c$ for some $c>0$. For $\epsilon>0$ consider 
\begin{equation}\label{eq:ellhom}
   C_d^* \mathcal{T}_{-\epsilon}A\mathcal{T}_{\epsilon}C_d u_\epsilon = f.
\end{equation}
Then $(u_\epsilon)_\epsilon$ is well-defined and uniformly bounded in $\dom(C_d)$ and strongly 2-scale converges to some $u\in \dom(C_d)\cap \kar(C_s)$. Moreover, $(C_du_\epsilon)_\epsilon$ weakly 2-scale converges to $C_d u+v$ for some $v\in \overline{\rge}(C_s)\cap \overline{\rge}(C_s^*)^n$, where $u,v$ are the unique solutions of the following system of equations
\begin{align}\label{eq:ellhom2a}
   C_d^* PA(C_du+v) &= f,\\ \label{eq:ellhom2b}
   C_s^*A(C_d u + v) & = 0.
\end{align}
Here, $P$ is the orthogonal projection onto $\kar(C_s)$.
\end{theorem}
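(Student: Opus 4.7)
The plan is to follow the classical structure of a two-scale homogenization proof: Lax--Milgram provides well-posedness with a uniform bound, the compactness Theorem~\ref{prop:wo3}(c) extracts two-scale limits, two tailored test-function families identify the limit system, and a uniqueness argument upgrades subsequential to full sequential convergence. For well-posedness, I would work with the sesquilinear form $b_\epsilon(z,w):=\langle A\mathcal{T}_\epsilon C_d z,\mathcal{T}_\epsilon C_d w\rangle$ on $\dom(C_d)$: unitarity of $\mathcal{T}_\epsilon$ and $\Re A\geq c$ give $\Re b_\epsilon(z,z)\geq c\|C_d z\|^2$, while injectivity and closed range of $C_d$ produce a Poincar\'e-type estimate $\|z\|\leq C_P\|C_d z\|$, making $b_\epsilon$ uniformly coercive in the graph norm. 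Lax--Milgram then yields a unique $u_\epsilon$ with $\|u_\epsilon\|_{\dom(C_d)}\leq C\|f\|$, and Theorem~\ref{prop:wo3}(c) (applicable since $m=1$ and \eqref{eq:inv} is in force) extracts, along a subsequence, $u\in\dom(C_d)\cap\kar(C_s)$ and $v\in\overline{\rge}(C_s)\cap\overline{\rge}(C_s^*)^n$ with $u_\epsilon\stackrel{2}{\rightharpoonup}u$ and $C_d u_\epsilon\stackrel{2}{\rightharpoonup}C_du+v$.

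To identify the limit system I would test against two families. For \eqref{eq:ellhom2a}, taking $\phi\in\dom(C_d)\cap\kar(C_s)$, Lemma~\ref{lem:proj} ($PC_d\subseteq C_dP$) gives $C_d\phi\in\kar(C_s)^n$, and \eqref{eq:inv} then yields $\mathcal{T}_\epsilon C_d\phi=C_d\phi$, so passing to the limit in $\langle f,\phi\rangle=\langle A\mathcal{T}_\epsilon C_du_\epsilon,C_d\phi\rangle$ identifies $C_d^*PA(C_du+v)=f$. For \eqref{eq:ellhom2b} I would use the oscillating family $\phi_\epsilon:=\epsilon\mathcal{T}_{-\epsilon}\psi$ with $\psi\in\dom(C_d)\cap\dom(C_s)$; the commutators \eqref{eq:com1} and \eqref{eq:com2} combine to give $\mathcal{T}_\epsilon C_d\phi_\epsilon=\epsilon C_d\psi+C_s\psi$, while the invariance $\mathcal{T}_\epsilon f=f$ forces $\langle f,\phi_\epsilon\rangle=\epsilon\langle f,\psi\rangle\to 0$. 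Passing to the limit delivers $\langle A(C_du+v),C_s\psi\rangle=0$, and a density/core argument anchored in \eqref{eq:core}---establishing that $\dom(C_d)\cap\dom(C_s)$ contains a core of $C_s$---extends this to $C_s^*A(C_du+v)=0$. This core argument is the main technical point in the identification step.

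Uniqueness of $(u,v)$ follows a coercivity-plus-orthogonality pattern: for the difference of two solutions, testing \eqref{eq:ellhom2a} by $u\in\kar(C_s)$ (so that $PC_du=C_du$) and using the vanishing of $\langle A(C_du+v),v\rangle$, a consequence of \eqref{eq:ellhom2b} and $v\in\overline{\rge}(C_s)=\kar(C_s^*)^{\bot}$, collapses to $\Re\langle A(C_du+v),C_du+v\rangle=0$; coercivity then forces $C_du+v=0$, orthogonality yields $C_du=0$ and $v=0$, and injectivity of $C_d$ gives $u=0$, so the whole sequence converges. For strong two-scale convergence I would invoke convergence of energies: $\Re\langle A\mathcal{T}_\epsilon C_du_\epsilon,\mathcal{T}_\epsilon C_du_\epsilon\rangle=\Re\langle f,u_\epsilon\rangle=\Re\langle f,\mathcal{T}_\epsilon u_\epsilon\rangle\to\Re\langle f,u\rangle$, which by the limit equations equals $\Re\langle A(C_du+v),C_du+v\rangle$; combined with weak two-scale convergence and coercivity of $A$, this upgrades $C_du_\epsilon\stackrel{2}{\rightharpoonup}C_du+v$ to strong two-scale convergence. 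Projecting onto $\kar(C_s)^n$ and applying Poincar\'e on $\dom(C_d)\cap\kar(C_s)$ then produces strong convergence of the macroscopic component $Pu_\epsilon\to u$; I expect that controlling the microscopic remainder $(I-P)u_\epsilon$---presumably the subtlest point---will deliver the claimed strong two-scale convergence of $u_\epsilon$ to $u$ itself.
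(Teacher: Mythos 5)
Your overall skeleton matches the paper's: Lax--Milgram (in place of the paper's direct operator-algebraic argument via $\iota^*C_d$, Theorem~\ref{thm:wpeps}) for well-posedness, compactness via Theorem~\ref{prop:wo3}(c), the two test-function families $\phi\in\dom(C_d)\cap\kar(C_s)$ and $\epsilon\mathcal{T}_{-\epsilon}\psi$ for identification, and an orthogonality-plus-coercivity uniqueness argument (Lemma~\ref{lem:unique}). Your energy-convergence argument for $C_du_\epsilon$ is correct and even gives \emph{strong} two-scale convergence of $C_du_\epsilon$, which is more than the theorem claims and more than the paper's proof establishes at that stage. The verification $\Re\langle f,u\rangle = \Re\langle A(C_du+v),C_du+v\rangle$ checks out, and the norm $\langle\Re A\,\cdot,\cdot\rangle^{1/2}$ is equivalent to the ambient one, so weak convergence plus convergence of this norm does upgrade to strong convergence; this is a legitimate alternative that shortcuts one of the uses of the paper's diagonal argument.

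The genuine gap is the final step. You write that ``controlling the microscopic remainder $(I-P)u_\epsilon$'' will give $\mathcal{T}_\epsilon u_\epsilon\to u$, but the Poincar\'e estimate cannot deliver this. Since $PC_d\subseteq C_dP$, the estimate gives
$\|(I-P)u_\epsilon\|\leq c_0\|(I-P)C_du_\epsilon\| = c_0\|(I-P)\mathcal{T}_\epsilon C_d u_\epsilon\|$, and the right-hand side converges (strongly, by your energy argument) to $c_0\|(I-P)(C_du+v)\| = c_0\|v\|$, which is not zero in general. Equivalently, $\|\mathcal{T}_\epsilon u_\epsilon - u\| = \|u_\epsilon - u\|\leq c_0\|\mathcal{T}_\epsilon C_du_\epsilon - C_d u\|\to c_0\|v\|$, so the naive Poincar\'e bound only controls the difference to $u$ by a quantity of order $\|v\|$. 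What is missing is a \emph{recovery sequence}: one needs $\varphi_{\delta,\epsilon}:=\mathcal{T}_{-\epsilon}u + \epsilon\,\mathcal{T}_{-\epsilon}\varphi_\delta$ with $\varphi_\delta\in\dom(C_d)\cap\dom(C_s)$ chosen so that $\|C_s\varphi_\delta - v\|\leq\delta$ (possible since $v\in\overline{\rge}(C_s)$ and, by \eqref{eq:core}, $\dom(C_d)\cap\dom(C_s)$ is a core for $C_s$). The commutation relations then give $\mathcal{T}_\epsilon C_d\varphi_{\delta,\epsilon}\to C_du+C_s\varphi_\delta$ and $\mathcal{T}_\epsilon\varphi_{\delta,\epsilon}\to u$, so Poincar\'e applied to $u_\epsilon-\varphi_{\delta,\epsilon}$ produces $\limsup_\epsilon\|u_\epsilon-\varphi_{\delta,\epsilon}\|\leq c_0\delta$; a diagonal argument in $(\epsilon,\delta)$ then closes the proof. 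This recovery construction (Lemma~\ref{lemma:570} in the paper) is the key idea you did not supply, and ``controlling $(I-P)u_\epsilon$'' cannot be made to work without it.
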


Before we come to a proof of Theorem \ref{thm:ellhom}, we address well-posedness of \eqref{eq:ellhom}. For this, we will use the direct approach outlined in \cite[Theorem 3.1]{TW14_FE}; see also \cite[Theorem 2.9]{W18_NHC}.

\begin{theorem}\label{thm:wpeps}
 Assume the conditions of Theorem \ref{thm:ellhom} to be in effect. Then for all $\epsilon>0$ there exists a unique $u_\epsilon\in \dom(C_d)$ satisfying \eqref{eq:ellhom}. Moreover, we have that $(u_\epsilon)_\epsilon$ is uniformly bounded in $\dom(C_d)$. 
\end{theorem}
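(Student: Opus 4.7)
The plan is to recast the equation \eqref{eq:ellhom} as a coercive variational problem on $\dom(C_d)$ (equipped with the graph norm) and apply the Lax--Milgram lemma, along the lines of \cite[Theorem 3.1]{TW14_FE}. First I would introduce, for each $\epsilon>0$, the sesquilinear form
\[
   a_\epsilon\colon \dom(C_d)\times\dom(C_d)\to\mathbb C,\qquad a_\epsilon(u,v):=\langle A\mathcal{T}_\epsilon C_d u,\mathcal{T}_\epsilon C_d v\rangle,
\]
and the antilinear functional $\ell(v):=\langle f,v\rangle$. Boundedness of $a_\epsilon$ with constant $\|A\|$ is immediate from the fact that $\mathcal{T}_\epsilon$ is unitary and $\|C_d u\|\le \|u\|_{\dom(C_d)}$; and $\ell$ is bounded on $\dom(C_d)$ with constant $\|f\|$.

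Next, I would establish coercivity. Using unitarity of $\mathcal{T}_\epsilon$ and $\Re A\ge c$,
\[
   \Re a_\epsilon(u,u) = \Re \langle A\mathcal{T}_\epsilon C_d u,\mathcal{T}_\epsilon C_d u\rangle \ge c\|\mathcal{T}_\epsilon C_d u\|^2 = c\|C_d u\|^2.
\]
Since $C_d$ is injective with closed range, the open mapping theorem applied to $C_d\colon \dom(C_d)\to\rge(C_d)$ yields a constant $\alpha>0$ with $\|C_d u\|\ge \alpha\|u\|$ for every $u\in\dom(C_d)$, so that $\|C_d u\|^2\ge \frac{\alpha^2}{1+\alpha^2}\|u\|^2_{\dom(C_d)}$. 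Combining the two estimates gives coercivity of $a_\epsilon$ on $\dom(C_d)$ with a constant independent of $\epsilon$.

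The Lax--Milgram lemma then supplies, for every $\epsilon>0$, a unique $u_\epsilon\in\dom(C_d)$ with $a_\epsilon(u_\epsilon,v)=\langle f,v\rangle$ for all $v\in\dom(C_d)$. To see that this $u_\epsilon$ actually solves \eqref{eq:ellhom} in the operator sense, I would read the variational identity as
\[
   \langle \mathcal{T}_{-\epsilon}A\mathcal{T}_\epsilon C_d u_\epsilon, C_d v\rangle = \langle f,v\rangle \qquad (v\in \dom(C_d)),
\]
which is precisely the definition of $\mathcal{T}_{-\epsilon}A\mathcal{T}_\epsilon C_d u_\epsilon\in\dom(C_d^*)$ with $C_d^*\mathcal{T}_{-\epsilon}A\mathcal{T}_\epsilon C_d u_\epsilon = f$. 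Conversely, any solution of \eqref{eq:ellhom} satisfies the variational identity, so uniqueness of $u_\epsilon$ follows.

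Finally, the uniform bound: testing the variational identity with $v=u_\epsilon$ and taking real parts,
\[
   c\|C_d u_\epsilon\|^2 \le \Re\langle f,u_\epsilon\rangle \le \|f\|\,\|u_\epsilon\| \le \tfrac{1}{\alpha}\|f\|\,\|C_d u_\epsilon\|,
\]
so $\|C_d u_\epsilon\|\le \frac{1}{c\alpha}\|f\|$ and therefore $\|u_\epsilon\|_{\dom(C_d)}\le C\|f\|$ with a constant $C$ independent of $\epsilon$. There is no real obstacle here; the only point deserving care is the Poincaré-type inequality $\|C_d u\|\ge \alpha\|u\|$, which is exactly where the hypotheses of closed range and injectivity of $C_d$ enter and which makes the constants $\epsilon$-uniform.
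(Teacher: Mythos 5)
Your proof is correct. The paper proves Theorem \ref{thm:wpeps} by an explicit operator-factorization: it introduces the embedding $\iota\colon\rge(C_d)\hookrightarrow (H_d\otimes H_s)^n$, observes $C_d^*\mathcal{T}_{-\epsilon}A\mathcal{T}_\epsilon C_d = C_d^*\iota\iota^*\mathcal{T}_{-\epsilon}A\mathcal{T}_\epsilon\iota\iota^*C_d$, and then inverts the three factors separately (using the closed graph theorem for $\iota^*C_d$ and $C_d^*\iota$ and the accretivity estimate $\Re\iota^*\mathcal{T}_{-\epsilon}A\mathcal{T}_\epsilon\iota\ge c\iota^*\iota$), yielding the explicit representation $u_\epsilon=(\iota^*C_d)^{-1}(\iota^*\mathcal{T}_{-\epsilon}A\mathcal{T}_\epsilon\iota)^{-1}(C_d^*\iota)^{-1}f$ and the uniform bound from $\|(\iota^*\mathcal{T}_{-\epsilon}A\mathcal{T}_\epsilon\iota)^{-1}\|\le 1/c$. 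You instead recast \eqref{eq:ellhom} as a coercive sesquilinear form on $\dom(C_d)$ and apply Lax--Milgram, then translate the variational identity back into the operator formulation by appealing directly to the definition of $\dom(C_d^*)$. Both arguments rest on the same two pillars (unitarity of $\mathcal{T}_\epsilon$ preserving $\Re A\ge c$, and the Poincar\'e inequality $\|C_d u\|\ge\alpha\|u\|$ coming from injectivity and closed range), and both produce an $\epsilon$-uniform bound of the form $\|C_d u_\epsilon\|\le\frac{1}{c\alpha}\|f\|$. The trade-off is a matter of taste: the paper's factorization gives an explicit solution formula (handy if one wants to read off further structure), while your Lax--Milgram route is shorter and avoids the bookkeeping with $\iota$ and $\iota^*$. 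Both are valid and essentially equivalent in strength.
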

\begin{proof} Let $\iota\colon \rge(C_d) \hookrightarrow (H_d\otimes H_s)^n$ denote the canonical embedding. Then $\iota^*$ is the orthogonal projection onto $\rge(C_d)$, see \cite[Lemma 3.2]{PTW15_FI}. Moreover, from $\kar(C_d^*)=\rge(C_d)^\bot$, it is easy to see that
 \[
    C_d^* \mathcal{T}_{-\epsilon}A\mathcal{T}_{\epsilon}C_d = C_d^*\iota\iota^* \mathcal{T}_{-\epsilon}A\mathcal{T}_{\epsilon}\iota\iota^*C_d. 
 \]
 By the closed graph theorem $\iota^*C_d$ is continuously invertible; by \cite[Lemma 2.4 and Corollary 2.5]{TW14_FE} $(\iota^*C_d)^*=C_d^*\iota$ is also continuously invertible. Next, from $\Re A\geq c$ it follows that $\Re \mathcal{T}_{-\epsilon}A\mathcal{T}_{\epsilon}\geq c$. In consequence, we obtain $\Re \iota^*\mathcal{T}_{-\epsilon}A\mathcal{T}_{\epsilon}\iota\geq c\iota^*\iota$. Thus, using that $\rge(C_d^*)$ is closed and dense in $H_d\otimes H_s$ by the injectivity of $C_d$, we obtain
 \begin{equation}\label{eq:wp1}
    u_\epsilon = (\iota^*C_d)^{-1} \left(\iota^* \mathcal{T}_{-\epsilon}A\mathcal{T}_{\epsilon}\iota\right)^{-1}(C_d^*\iota)^{-1} f,
 \end{equation}
 which yields uniqueness of solutions of \eqref{eq:ellhom}. Multiplying this equality by $\iota^*C_d$, we infer
 \begin{equation}\label{eq:wp2}
      \iota^*C_du_\epsilon = \left(\iota^* \mathcal{T}_{-\epsilon}A\mathcal{T}_{\epsilon}\iota\right)^{-1}(C_d^*\iota)^{-1} f.
 \end{equation}
 The equalities \eqref{eq:wp1} and \eqref{eq:wp2} together with 
 \[
    \|\left(\iota^* \mathcal{T}_{-\epsilon}A\mathcal{T}_{\epsilon}\iota\right)^{-1}\|\leq \frac{1}{c}
 \]
yield that $(u_\epsilon)_\epsilon$ is uniformly bounded in $\dom(C_d)$.
\end{proof}

The next result settles uniqueness of the homogenized equations stated in Theorem \ref{thm:ellhom}. The rationale is similar to the one in \cite[Theorem 4.1.1]{Bourgeat1994}; however we do not need to impose the curl-condition nor do we use any variant of `Kozlov's identity' (see \cite[Lemma 2.4 and the subsequent remark]{Bourgeat1994}).

\begin{lemma}\label{lem:unique} Let $u\in \dom(C_d)\cap \kar(C_s)$ and $v\in \overline{\rge}(C_s)\cap \overline{\rge}(C_s^*)^n$ satisfy \eqref{eq:ellhom2a} and \eqref{eq:ellhom2b} with $f=0$. Assume that $C_d$ is injective. Then $u=0$ and $v=0$.  
\end{lemma}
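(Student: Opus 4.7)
The plan is to test the two homogenized equations against $u$ and against $v$ (or rather use the subspace structure of $v$), combine them to isolate $\langle A(C_du+v), C_du+v\rangle=0$, and then use coercivity of $A$ together with an orthogonality argument to split $C_du$ from $v$.

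First, I would observe that because $u\in\kar(C_s)$ and $m=1$, Lemma \ref{lem:proj} applies with $K=\kar(C_s)\subseteq H_d\otimes H_s$ and yields $PC_d\subseteq C_dP$. Thus $PC_du=C_dPu=C_du$, which (understanding the componentwise action of $P$ on $(H_d\otimes H_s)^n$) means $C_du\in\kar(C_s)^n$. Testing \eqref{eq:ellhom2a} with $u$ and using self-adjointness of $P$ gives
\[
 0=\langle C_d^*PA(C_du+v),u\rangle=\langle PA(C_du+v),C_du\rangle=\langle A(C_du+v),PC_du\rangle=\langle A(C_du+v),C_du\rangle.
\]

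Next, equation \eqref{eq:ellhom2b} (with $f=0$) says $A(C_du+v)\in\kar(C_s^*)=\overline{\rge}(C_s)^\bot$. Since $v\in\overline{\rge}(C_s)$, we obtain $\langle A(C_du+v),v\rangle=0$. Adding the two identities yields $\langle A(C_du+v),C_du+v\rangle=0$, and taking real parts together with $\Re A\geq c>0$ forces
\[
 C_du+v=0.
\]

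Finally, I exploit the extra structure $v\in\overline{\rge}(C_s^*)^n$. Componentwise, each entry of $v$ lies in $\overline{\rge}(C_s^*)=\kar(C_s)^\bot$ inside $H_d\otimes H_s$, while each entry of $C_du$ lies in $\kar(C_s)$ by the first step. Hence $\langle C_du,v\rangle_{(H_d\otimes H_s)^n}=0$, and from $C_du+v=0$ I conclude $\|C_du\|^2+\|v\|^2=0$, so $C_du=0$ and $v=0$. Injectivity of $C_d$ then gives $u=0$.

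The only real subtlety is the componentwise interpretation of $P$ and the use of $\overline{\rge}(C_s^*)^n$ to produce the orthogonality between $C_du$ and $v$; once this is granted, coercivity does the rest. No curl-type condition or Kozlov identity is needed, precisely as advertised before the statement.
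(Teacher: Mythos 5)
Your proof is correct and follows essentially the same route as the paper: test the two homogenized equations to obtain $\langle A(C_du+v),C_du+v\rangle=0$, conclude $C_du+v=0$ from coercivity, and then use the orthogonality $\kar(C_s)^n\ni C_du\perp v\in\overline{\rge}(C_s^*)^n$ to separate the two terms. The only cosmetic difference is that you establish $C_du\in\kar(C_s)^n$ explicitly via Lemma~\ref{lem:proj} before computing, whereas the paper folds this into the chain $\langle A\zeta,C_dPu\rangle=\langle PA\zeta,C_du\rangle$; note the projection in Lemma~\ref{lem:proj} lives on a closed subspace of $H_s$ (so one should identify $\kar(C_s)$ with $H_d\otimes\ker(C_s|_{H_s})$, as in the proof of Theorem~\ref{prop:wo3}(c)), but this is precisely what the $m=1$ hypothesis permits.
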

\begin{proof} We define $\zeta\coloneqq C_d u+v$. Since $\zeta-C_du=v \in \overline{\rge}(C_s)$ and $A(C_d u+v)\in \kar(C_s^*)$, by \eqref{eq:ellhom2b},
\[
    0 = \langle A\zeta, v\rangle = \langle A \zeta,\zeta-C_d u\rangle.
\]
Thus, we deduce from \eqref{eq:ellhom2a} with $f=0$ using $u\in \dom(C_d)\cap \kar(C_s)$
\[
   \langle A\zeta, \zeta\rangle = \langle A\zeta, C_d u\rangle=\langle A\zeta, C_d P u\rangle=\langle PA\zeta, C_d u\rangle=\langle C_d^*PA\zeta, u\rangle=0.
\]
Thus, by $\Re A\geq c$, we infer $\zeta = 0$. Since $\kar(C_s)^n\ni C_d u \bot v \in \overline{\rge}(C_s^*)^n$, we obtain the assertion.
\end{proof}

The following simple lemma will be useful in the proof of Theorem \ref{thm:ellhom}. It provides a recovery construction for the weak two-scale limit of the sequence $C_d u_{\varepsilon}$.
\begin{lemma}\label{lemma:570}
Let $u\in \dom(C_d)\cap \kar(C_s)$ and $v\in \overline{\rge}(C_s)$. For $\delta>0$, there exists $\varphi_{\delta} \in \dom(C_d)\cap \dom(C_s)$ such that
\begin{equation}\label{eq:571}
\| C_s \varphi_{\delta} - v\|\leq \delta.
\end{equation} 
Moreover, for $\varphi_{\delta,\varepsilon}:= \invunf u + \varepsilon \invunf \varphi_{\delta}$, we obtain 
\begin{equation*}
\unf \varphi_{\delta,\varepsilon} \to u, \quad \unf C_{d} \varphi_{\delta,\varepsilon} \to C_d u + C_s \varphi_{\delta} \quad (\text{as }\varepsilon\to 0).
\end{equation*}
\begin{proof}
By the definition of $\overline{\rge}(C_s)$ and using \eqref{eq:core}, we obtain that there exists $\varphi_{\delta} \in \dom(C_d)\cap \dom(C_s)$ which satisfies (\ref{eq:571}). 
Also, since $\unf \varphi_{\delta,\varepsilon}= u+ \varepsilon \varphi_{\delta}$, it follows that $\unf \varphi_{\delta,\varepsilon}\to u$. Furthermore, using (\ref{eq:com1})-(\ref{eq:com2}) we compute
\begin{equation*}
C_d \varphi_{\delta,\varepsilon} = \invunf C_d u + \varepsilon \invunf C_d \varphi_{\delta} + \invunf C_s \varphi_{\delta}.
\end{equation*}
This implies that $\unf C_d \varphi_{\delta,\varepsilon}\to C_d u + C_s \varphi_{\delta}$.
\end{proof}
\end{lemma}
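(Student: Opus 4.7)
My plan is to construct $\varphi_\delta$ by a density argument, and then verify the two strong convergences by direct computation using the structural hypotheses on the unfolding family. For the existence of $\varphi_\delta$, I would first pick some $\psi\in\dom(C_s)$ with $\|C_s\psi-v\|<\delta/2$, which is available by the definition of $\overline{\rge}(C_s)$. The real issue is then to replace $\psi$ by an element of the smaller space $\dom(C_d)\cap\dom(C_s)$ without increasing the $C_s$-image error by more than $\delta/2$. This is where \eqref{eq:core} enters: with $m=1$ in force, the first and third core conditions furnish dense sets $D_1\subseteq H_d$ and $E_1\subseteq H_s$ such that every simple tensor $x\otimes y$ with $x\in D_1$, $y\in E_1$ lies in $\dom(C_d)\cap\dom(C_s)$. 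A standard tensor-product argument then shows that the algebraic span of $D_1\otimes E_1$ is a core for $C_s$ on $H_d\otimes H_s$, so $\psi$ can be approximated in the $C_s$-graph norm by an element $\varphi_\delta$ of the intersection, yielding \eqref{eq:571}.

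With $\varphi_\delta$ in hand, the convergence claims reduce to direct computation. The first is immediate from $\unf\invunf=1$ (by \eqref{eq:inverse}): indeed $\unf\varphi_{\delta,\varepsilon}=u+\varepsilon\varphi_\delta\to u$. For the second, I expand $C_d\varphi_{\delta,\varepsilon}$ term by term. For $\invunf\varphi_\delta$, membership $\varphi_\delta\in\dom(C_d)\cap\dom(C_s)$ lets me combine \eqref{eq:com1} with \eqref{eq:com2} to obtain $\varepsilon C_d\invunf\varphi_\delta=\varepsilon\invunf C_d\varphi_\delta+\invunf C_s\varphi_\delta$. For $\invunf u$, the hypothesis $u\in\kar(C_s)$ combined with \eqref{eq:inv} yields $\invunf u=u$ and $C_s u=0$, so \eqref{eq:com1} collapses to the clean identity $C_d\invunf u=\invunf C_d u$. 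Summing and applying $\unf$ gives $\unf C_d\varphi_{\delta,\varepsilon}=C_d u+\varepsilon C_d\varphi_\delta+C_s\varphi_\delta$, which converges in norm to $C_d u+C_s\varphi_\delta$.

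The only non-bookkeeping step is the first one: the core conditions in \eqref{eq:core} are stated only as membership in the domain, and they must be translated into the required approximation of $v\in\overline{\rge}(C_s)$ by an element of the smaller set $\dom(C_d)\cap\dom(C_s)$. This is routine from standard tensor-product theory of unbounded operators, but it is the only place where \eqref{eq:core} is genuinely used; the rest is pure algebra driven by \eqref{eq:inverse}, \eqref{eq:com1}--\eqref{eq:com2}, and \eqref{eq:inv}.
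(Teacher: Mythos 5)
Your proposal is correct and follows essentially the same route as the paper: approximate $v$ by $C_s\varphi_\delta$ with $\varphi_\delta$ in $\dom(C_d)\cap\dom(C_s)$ via the density/core structure, then expand $C_d\varphi_{\delta,\varepsilon}$ using \eqref{eq:com1}--\eqref{eq:com2} (and $u\in\kar(C_s)$) to get $\unf C_d\varphi_{\delta,\varepsilon}=C_d u+\varepsilon C_d\varphi_\delta+C_s\varphi_\delta\to C_d u+C_s\varphi_\delta$. The only cosmetic difference is that you also invoke \eqref{eq:inv} to get $\invunf u=u$, which is valid here but redundant: $C_s\invunf u=\invunf C_s u=0$ already follows from \eqref{eq:com2} and $u\in\kar(C_s)$, which is what the paper uses.
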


\begin{proof}[Proof of Theorem \ref{thm:ellhom}] The family $(u_\epsilon)_{\epsilon>0}$ is well-defined and bounded in $\dom(C_d)$ by Theorem \ref{thm:wpeps}. By Theorem \ref{prop:wo3} (c), we find a subsequence of $(u_\epsilon)_{\epsilon>0}$, $u\in \dom(C_d)\cap \kar(C_s)$, $v\in \overline{\rge}(C_s^*)^n\cap \overline{\rge}(C_s)$ such that $(u_\epsilon)_{\epsilon}$ and $(C_d u_\epsilon)_{\epsilon}$ weakly 2-scale converge to $u$ and $C_d u+v$, respectively. Let $\phi\in \dom(C_d)$ and $\psi\in \dom(C_s)\cap \dom(C_d)$. Then we compute for suitable $\epsilon>0$ using \eqref{eq:ellhom}
\begin{align*}
  \langle f,\phi\rangle + \langle \epsilon f, \mathcal{T}_{-\epsilon}\psi\rangle & = \langle \mathcal{T}_{-\epsilon} A \mathcal{T}_\epsilon C_d u_\epsilon,  C_d P \phi+ \epsilon C_d \mathcal{T}_{-\epsilon}\psi\rangle
  \\ & = \langle \mathcal{T}_{-\epsilon} A \mathcal{T}_{\epsilon} C_d u_\epsilon, C_d P \phi + \epsilon \mathcal{T}_{-\epsilon} C_d\psi + C_s \mathcal{T}_{-\epsilon}\psi\rangle
  \\ & = \langle  A \mathcal{T}_{\epsilon} C_d u_\epsilon, \mathcal{T}_{\epsilon} C_d P \phi\rangle +
  \langle  A \mathcal{T}_{\epsilon} C_d u_\epsilon, \epsilon  C_d\psi\rangle +
  \langle  A \mathcal{T}_{\epsilon} C_d u_\epsilon, C_s \psi\rangle,
\end{align*}
  where we used $\epsilon C_d \mathcal{T}_{-\epsilon}\psi=\epsilon \mathcal{T}_{-\epsilon} C_d\psi + C_s \mathcal{T}_{-\epsilon}\psi$ and $\mathcal{T}_{-\epsilon}C_s\psi=C_s\mathcal{T}_{-\epsilon}\psi$. Since $C_dP\phi\in \kar(C_s)$, we have $\mathcal{T}_{\epsilon} C_d P \phi= \mathcal{T}_{\epsilon}P C_d  \phi = PC_d\phi$. Thus, we obtain
 \[
    \langle f,\phi\rangle + \langle \epsilon f, \mathcal{T}_{-\epsilon}\psi\rangle=\langle  A \mathcal{T}_{\epsilon} C_d u_\epsilon, P C_d \phi\rangle + \langle  A \mathcal{T}_{\epsilon} C_d u_\epsilon, \epsilon  C_d\psi\rangle +
  \langle  A \mathcal{T}_{\epsilon} C_d u_\epsilon, C_s \psi\rangle.
 \]
In this equality we may let $\epsilon\to 0$ and obtain the asserted equalities \eqref{eq:ellhom2a} and \eqref{eq:ellhom2b}. As the solutions to \eqref{eq:ellhom2a} and \eqref{eq:ellhom2b} are unique by Lemma \ref{lem:unique}, we deduce that any subsequence of $(u_\epsilon)_\epsilon$ and $(C_du_\epsilon)_\epsilon$ weakly 2-scale converges to $u$ and $C_d u+v$, which implies that both $(u_\epsilon)_\epsilon$ and $(C_du_\epsilon)_\epsilon$ weakly 2-scale converge without the choice of subsequences.

In order to show strong convergence of $(\unf u_{\varepsilon})_\epsilon$, we consider  $\varphi_{\delta}$ and $\varphi_{\delta,\varepsilon}$ defined as in Lemma \ref{lemma:570} (for $u,v$ the solution of (\ref{eq:ellhom2a})-(\ref{eq:ellhom2b})).
Since $C_d$ is injective and it has closed range, it follows (see, e.g., \cite[Remark 3.2(b)]{TW14_FE}) that there exists $c_0>0$ such that $\|\varphi \|\leq c_0 \|C_d \varphi\|$ for all $\varphi \in \dom(C_d)$. As a result of this, we obtain
\begin{equation}\label{eq:605}
\|u_{\varepsilon}-\varphi_{\delta,\varepsilon}\| \leq c_0 \|C_{d}(u_{\varepsilon}-\varphi_{\delta,\varepsilon})\| = c_0\|\unf C_d (u_{\varepsilon}-\varphi_{\delta,\varepsilon})\|.
\end{equation}
Using the assumption $\Re A\geq c$ (in the inequality) and that $u_{\varepsilon}$ solves (\ref{eq:ellhom}) (in the equality), we obtain
\begin{align*}
c \|\unf C_d (u_{\varepsilon}- \varphi_{\delta,\varepsilon})\|^2 & \leq \Re \expect{A \unf C_d( u_{\varepsilon} -\varphi_{\delta,\varepsilon} ),\unf C_d( u_{\varepsilon} -\varphi_{\delta,\varepsilon})} \\ & = \Re \expect{f, u_{\varepsilon} - \varphi_{\delta,\varepsilon}} - \Re \expect{A \unf C_d \varphi_{\delta,\varepsilon},\unf C_d(u_{\varepsilon}-\varphi_{\delta,\varepsilon})}.
\end{align*}
Since $f \in \kar (\unf-1)$ (for any $\varepsilon \neq 0$), we conclude that the first term on the right-hand side equals $\Re \expect{f, \unf u_{\varepsilon}-\unf \varphi_{\delta,\varepsilon}}$. As a result of this and using the properties of $\varphi_{\delta,\varepsilon}$ (Lemma \ref{lemma:570}), we obtain that for fixed $\delta>0$,
\begin{equation*}
\limsup_{\varepsilon\to 0} 
\|\unf C_d (u_{\varepsilon}- \varphi_{\delta,\varepsilon})\|^2 \leq  -\Re\expect{A\brac{C_d u + C_s \varphi_{\delta}},v-C_s \varphi_{\delta}}.
\end{equation*}
Moreover, using the above and (\ref{eq:605}) we obtain (using that the second term vanishes in the limit $\varepsilon\to 0$ by Lemma \ref{lemma:570}) for some $c_1\geq 0$ and all $\delta>0$
\begin{equation*}
\limsup_{\varepsilon\to 0} \brac{\|u_{\varepsilon}-\varphi_{\delta,\varepsilon}\|^2+ \|\unf \varphi_{\delta,\varepsilon}- u\|^2+ \|\unf C_d(u_{\varepsilon}-\varphi_{\delta,\varepsilon})\|^2}\leq c_1 \|C_d u + C_s\varphi_{\delta}\| \|v-C_s \varphi_{\delta}\|.
\end{equation*}
By the choice of $\varphi_{\delta}$ (Lemma \ref{lemma:570}), in the limit $\delta \to 0$ the right-hand side vanishes. Consequently, we find a diagonal sequence $\delta(\varepsilon)\to 0$ (as $\varepsilon\to 0$) such that 
\begin{equation}\label{eq:622}
\|u_{\varepsilon}-\varphi_{\delta(\varepsilon),\varepsilon}\|+ \|\unf \varphi_{\delta(\varepsilon),\varepsilon}- u\|+ \|\unf C_d(u_{\varepsilon}-\varphi_{\delta(\varepsilon),\varepsilon})\|\to 0 \quad \text{as }\varepsilon\to 0.
\end{equation}
Finally, this yields
\begin{equation*}
\|\unf u_{\varepsilon} - u\| \leq \| \unf u_{\varepsilon}-\unf \varphi_{\delta(\varepsilon),\varepsilon}\|+ \|\unf \varphi_{\delta(\varepsilon),\varepsilon}- u\| \to 0 \quad \text{as }\varepsilon\to 0.\qedhere
\end{equation*}
\end{proof}
The above proof implies the following abstract corrector type result (see (\ref{eq:622})).
\begin{corollary} 
Assume the conditions of Theorem \ref{thm:ellhom} to be in effect and let $u_{\varepsilon},u, v$ be given as in Theorem \ref{thm:ellhom}. There exists a sequence $\varphi_{\varepsilon} \in \dom(C_d)\cap \dom(C_s)$ such that
\begin{equation*}
\varepsilon \varphi_{\varepsilon}\to 0, \quad 
\unf C_d(u_{\varepsilon}-\varepsilon \mathcal{T}_{-\varepsilon}\varphi_{\varepsilon}) \to C_d u \quad \text{strongly in }H_d \otimes H_s.
\end{equation*}
\end{corollary}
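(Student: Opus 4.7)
The plan is to read off the desired corrector $\varphi_\varepsilon$ directly from the diagonal construction that already appears at the end of the proof of Theorem~\ref{thm:ellhom}. More precisely, for each $\delta > 0$ Lemma~\ref{lemma:570} provides some $\psi_\delta \in \dom(C_d)\cap\dom(C_s)$ with $\|C_s\psi_\delta - v\| \leq \delta$, and the associated $\varphi_{\delta,\varepsilon}=\invunf u + \varepsilon\invunf\psi_\delta$ already satisfies (\ref{eq:622}) along a diagonal $\delta(\varepsilon)\to 0$. Since each $\psi_\delta$ is a fixed vector in $H_d \otimes H_s$, a standard Attouch-type refinement of the diagonal lets me slow down the decay of $\delta(\varepsilon)$ enough to additionally guarantee $\varepsilon\|\psi_{\delta(\varepsilon)}\|\to 0$, without destroying any of the three limits in (\ref{eq:622}). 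I then set $\varphi_\varepsilon := \psi_{\delta(\varepsilon)}$, which gives $\varphi_\varepsilon\in\dom(C_d)\cap\dom(C_s)$ and $\varepsilon\varphi_\varepsilon\to 0$ in $H_d\otimes H_s$.

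The central algebraic observation I would use is that $C_d$ commutes with $\invunf$ on $\kar(C_s)\cap\dom(C_d)$. Indeed, (\ref{eq:com2}) (applied with $-\varepsilon$) gives $\invunf u\in\dom(C_s)$ with $C_s\invunf u=\invunf C_s u=0$, and feeding this into (\ref{eq:com1}) yields $\varepsilon C_d\invunf u = \varepsilon\invunf C_d u$, hence $\unf C_d\invunf u = C_d u$. Applying this to the decomposition $\varphi_{\delta(\varepsilon),\varepsilon}=\invunf u+\varepsilon\invunf\varphi_\varepsilon$ gives
\[
\unf C_d(u_\varepsilon - \varepsilon\invunf\varphi_\varepsilon)
= \unf C_d u_\varepsilon - \unf C_d\varphi_{\delta(\varepsilon),\varepsilon} + C_d u,
\]
so the claim reduces to $\|\unf C_d(u_\varepsilon - \varphi_{\delta(\varepsilon),\varepsilon})\|\to 0$, which is precisely one of the three statements collected in (\ref{eq:622}).

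The only point requiring care is the diagonal selection itself: one has to simultaneously drive $\delta\to 0$ (so that the approximation $C_s\psi_\delta\to v$ is effective), let $\varepsilon\to 0$ at a rate compatible with the convergences in Lemma~\ref{lemma:570}, and keep $\varepsilon\|\psi_{\delta(\varepsilon)}\|\to 0$. This is pure bookkeeping, since all four requirements are monotone in the rate of decay of $\delta(\varepsilon)$, so no genuine analytic obstacle arises beyond the Attouch-type diagonalization already invoked in the proof of Theorem~\ref{thm:ellhom}.
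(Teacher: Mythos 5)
Your argument is correct and follows essentially the same line as the paper's (very terse) proof, which simply points to the diagonal limit \eqref{eq:622}: you take $\varphi_\varepsilon := \varphi_{\delta(\varepsilon)}$ (the function from Lemma~\ref{lemma:570} along the diagonal already built in the proof of Theorem~\ref{thm:ellhom}), verify via the commutation relations that $\unf C_d \invunf u = C_d u$, and then reduce the asserted convergence to the third summand of \eqref{eq:622}. That reduction is algebraically clean and exactly the intended step.

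One small remark: the Attouch-type refinement you invoke to ensure $\varepsilon\|\varphi_{\delta(\varepsilon)}\|\to 0$ is superfluous. Since $\unf\varphi_{\delta(\varepsilon),\varepsilon} - u = \varepsilon\varphi_{\delta(\varepsilon)}$, the second summand in \eqref{eq:622} \emph{is} precisely $\varepsilon\|\varphi_{\delta(\varepsilon)}\|$, so $\varepsilon\varphi_{\delta(\varepsilon)}\to 0$ holds automatically along the diagonal that was already chosen. No adjustment of the rate $\delta(\varepsilon)$ is needed; all three pieces of the corollary are already packed into \eqref{eq:622} once one notices the identity $\unf C_d(u_\varepsilon - \varepsilon\invunf\varphi_\varepsilon) - C_d u = \unf C_d(u_\varepsilon - \varphi_{\delta(\varepsilon),\varepsilon})$ that you derived.
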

\section{Homogenization of abstract evolutionary equations}
\subsection{A Hilbert space framework for evolutionary equations}\label{sec:ahil}

For the application to time-dependent problems in the subsequent sections, we shall specify the particular framework, we are working in. For this, we recall some results from \cite{PicPhy}, where the setting was introduced for the first time. We shall also refer to \cite[Section 2]{KPSTW14_OD} for more details, when properties of the time-derivative to be introduced in the following are concerned.  

To begin with, we define for $\nu\in\mathbb{R}_{\ge0}$ the space $L_\nu^2(\mathbb{R};H)$ of (equivalence classes of) Hilbert space $H$-valued functions $f$ being Bochner measurable and satisfy
\[
   \|f\|_{\nu}^2\coloneqq \int_\mathbb{R} \|f(t)\|_H^2 \exp(-2\nu t) dt<\infty.
\]
$L_\nu^2(\mathbb{R};H)$ is a Hilbert space under the norm $\|\cdot\|_\nu$. Denoting by $H_\nu^1(\mathbb{R};H)$ the (first) Sobolev space of weakly differentiable functions with distributional derivative representable as an element in $L_\nu^2(\mathbb{R};H)$, we define the \emph{time-derivative} as the operator
\[
  \partial_{t,\nu} \colon H_\nu^1(\mathbb{R};H)\subseteq L_\nu^2(\mathbb{R};H)\to L_\nu^2(\mathbb{R};H), f\mapsto f'.
\]
It turns out that $\partial_{t,\nu}$ is a normal operator. More particularly, $\partial_{t,\nu}$ admits a spectral representation as multiplication operator in $L^2(\mathbb{R};H)$. For this, we define the multiplication-by-the-argument operator
\[
   \m \colon \dom(\m)\subseteq L^2(\mathbb{R};H) \to L^2(\mathbb{R};H), \phi\mapsto (\xi\mapsto \xi\phi(\xi)),
\]
where
\[
 \dom(\m)\coloneqq \{ \phi\in L^2(\mathbb{R};H); (\xi\mapsto \xi \phi(\xi))\in L^2(\mathbb{R};H)\}.
\]
Define the ($H$-valued) Fourier transformation $\mathcal{F}\colon L^2(\mathbb{R};H)\to L^2(\mathbb{R};H)$ as the unitary extension of 
\[
   \mathcal{F} f\coloneqq \big(\xi \mapsto \frac{1}{\sqrt{2\pi}}\int_\mathbb{R} \exp(-\i t\xi)f(t)dt\big)\quad (f\in L^1(\mathbb{R};H)\cap L^2(\mathbb{R};H)).
\]
Moreover, we define for $\nu\in\mathbb{R}_{\ge 0}$ the (obviously) unitary operator
\[
   \exp(-\nu \m)\colon L_\nu^2(\mathbb{R};H)\to L^2(\mathbb{R};H), f\mapsto (t\mapsto \e^{-\nu t} f(t)),
\]
and we set
\[
   \mathcal{L}_\nu \coloneqq \mathcal{F}\exp(-\nu \m),
\]
the \emph{Fourier--Laplace transformation}. With the latter transformation at hand the explicit spectral theorem for $\partial_{t,\nu}$ (see also \cite[Corollary 2.5(c)]{KPSTW14_OD}) reads
\[
   \partial_{t,\nu} = \mathcal{L}_\nu^* \big(\i \m +\nu\big) \mathcal{L}_\nu.
\]
The latter equation yields a functional calculus for $\partial_{t,\nu}$ (or for $\partial_{t,\nu}^{-1}$.). In fact, let $\nu_0\in\mathbb{R}_{\ge 0}$ and $\nu>\nu_0$ and let $\mathcal{M}\in \mathcal{H}(\mathbb{C}_{\Re>\nu_0};L(H,K))$, where $K$ is another Hilbert space and
\[
   \mathcal{H}(\mathbb{C}_{\Re>\nu_0};L(H,K))\coloneqq \{\mathcal{M}\colon \mathbb{C}_{\Re>\nu_0}\to L(H,K); \mathcal{M} \text{ analytic}\}. 
\]
Then we define
\[
   \mathcal{M}\big(\partial_{t,\nu}\big)\coloneqq \mathcal{L}_\nu^* \left(\mathcal{M}\big(\i \m +\nu\big)\right) \mathcal{L}_\nu,
\]
where 
\[
   \left(\mathcal{M}\big(\i \m +\nu\big)\phi\right)(\xi)\coloneqq \mathcal{M}\big(\i \xi +\nu\big)\phi(\xi)\quad(\xi\in\mathbb{R})
\]
for all compactly supported, continuous functions $\phi\colon \mathbb{R}\to H$.

The reason we focus on analytic functions $\mathcal{M}$ rather than continuous or even just measurable functions is that analyticity of $\mathcal{M}$ and causality of $\mathcal{M}(\partial_{t,\nu})$ are strongly related, which is apparent from the Paley--Wiener theorem, see e.g., \cite[Section 2]{PicPhy} or \cite[Theorem 2.4]{PTW15_WP_P}, \cite[Section 1.2]{W16_H}.

We are now in the position to formulate the well-posedness theorem, which can be viewed as underlying structure of many linear equations in mathematical physics and continuum mechanics.

\begin{theorem}[{{\cite[Solution Theory]{PicPhy}}}]\label{thm:st} Let $\nu_0\in\mathbb{R}_{\ge 0}$, $\nu>\nu_0$, $H$ Hilbert space, $A\colon \dom(A)\subseteq H\to H$ a skew-self-adjoint operator, i.e., $A=-A^*$, and $\mathcal{M}\in \mathcal{H}(\mathbb{C}_{\Re>\nu_0};L(H))$. Assume there exists $c>0$ such that
\[
    \Re \langle \mathcal{M}(z)\phi,\phi\rangle \geq c\langle \phi,\phi\rangle\quad(z\in\mathbb{C}_{\Re>\nu_0},\phi\in H).
\]
Then the operator $B\coloneqq \mathcal{M}(\partial_{t,\nu})+A$ is densely defined on $L_\nu^2(\mathbb{R};H)$. Moreover, $B$ is closable and $\overline{B}$ is continuously invertible in $L_\nu^2(\mathbb{R};H)$, $\|\overline{B}^{-1}\|\leq 1/c$ and $\overline{B}^{-1}$ is \emph{causal}, that is,
\[
   \mathbf{1}_{\mathbb{R}_{\leq a}}\overline{B}^{-1}\mathbf{1}_{\mathbb{R}_{\leq a}}=\mathbf{1}_{\mathbb{R}_{\leq a}}\overline{B}^{-1}\quad(a\in\mathbb{R}).
\] 
\end{theorem}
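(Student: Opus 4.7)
The plan is to pass through the Fourier--Laplace transform and reduce the problem to a fibrewise matter, on which the skew-self-adjointness of $A$ together with the strict positivity of $\Re \mathcal{M}$ yields the required estimates directly.

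First I would check that $B$ is densely defined on $L_\nu^2(\mathbb{R};H)$. The natural domain $\dom(\mathcal{M}(\partial_{t,\nu}))\cap\dom(A)$ (with $A$ understood via its canonical extension to $L_\nu^2(\mathbb{R};H)$) contains the algebraic tensor product of $C_c^\infty(\mathbb{R})$ with $\dom(A)$, which is dense. Closability will be a byproduct: once $\overline{B}$ is shown to be invertible with bounded inverse, $B$ must be closable and $\overline{B}$ can be identified with $\bigl(B^{-1}\bigr)^{-1}$.

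The central step is fibrewise inversion. Conjugation by the unitary $\mathcal{L}_\nu$ turns $B$ into
\[
 \tilde B \coloneqq \mathcal{M}(\i\m+\nu)+A,
\]
which acts as multiplication in the frequency variable by $\xi\mapsto \mathcal{M}(\i\xi+\nu)+A$. For each $z=\i\xi+\nu$ one has, from $A=-A^*$, $\Re\langle A\phi,\phi\rangle=0$, and thus
\[
   \Re\langle(\mathcal{M}(z)+A)\phi,\phi\rangle \geq c\|\phi\|^2\qquad(\phi\in\dom(A)).
\]
Applying the same estimate to the adjoint $\mathcal{M}(z)^*-A$ (which has the same real part lower bound $c$), a standard Lax--Milgram / Lumer--Phillips argument shows that $\mathcal{M}(z)+A$ is continuously invertible on $H$ with $\|(\mathcal{M}(z)+A)^{-1}\|\leq 1/c$ uniformly in $\xi$. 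Strong continuity of $\xi\mapsto (\mathcal{M}(\i\xi+\nu)+A)^{-1}$ (via analyticity of $\mathcal{M}$ and a resolvent identity) makes this family measurable, so it defines a bounded multiplication operator of norm at most $1/c$. Transporting back by $\mathcal{L}_\nu$ yields $\overline{B}^{-1}$ with the claimed norm bound.

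For causality I would exploit that the construction above works for every $\nu>\nu_0$ with the same norm bound, and the resulting inverses agree on overlaps $L_{\nu_1}^2(\mathbb{R};H)\cap L_{\nu_2}^2(\mathbb{R};H)$ (both being the unique solution operators for the same equation). This $\nu$-independence is precisely the hypothesis needed to invoke the Paley--Wiener characterization of causal operators on exponentially weighted $L^2$-spaces, as in \cite{PicPhy}: a bounded operator on $L_\nu^2$ that is translation--covariant and uniformly bounded across a range of weights $\nu>\nu_0$ must map functions supported in $\mathbb{R}_{\leq a}$ to functions supported in $\mathbb{R}_{\leq a}$. I expect the main technical obstacle to be the rigorous handling of the fibrewise inverse $\xi\mapsto(\mathcal{M}(\i\xi+\nu)+A)^{-1}$ as a genuine bounded operator on $L^2(\mathbb{R};H)$, namely the measurability of this field and the correct interpretation of the sum of the bounded operator $\mathcal{M}(z)$ and the unbounded skew-self-adjoint $A$ in each fiber; once that is in place the rest of the argument is quantitative and essentially self-running.
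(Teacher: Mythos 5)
Your proposal is correct and follows essentially the same route as the paper: reduction to a fibrewise problem via $\mathcal{L}_\nu$, coercivity of $\mathcal{M}(z)+A$ from $\Re\mathcal{M}(z)\geq c$ and skew-self-adjointness of $A$, and a Paley--Wiener argument for causality. The paper simply outsources the two key steps to \cite[Lemma 2.12]{EGW17_D2N} and \cite[Remark 2.3(a)]{Trostorff2015a}, which carry out precisely the Lax--Milgram-type fibrewise inversion and the lifting (with causality) to $L_\nu^2(\mathbb{R};H)$ that you sketch.
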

\begin{proof}
 For $z\in \mathbb{C}_{\Re>\nu_0}$ it follows from \cite[Lemma 2.12]{EGW17_D2N} that $\mathcal{M}(z)+A$ is continuously invertible in $H$ with inverse satisfying $\|(\mathcal{M}(z)+A)^{-1}\|\leq 1/c$. Thus, \cite[Remark 2.3(a)]{Trostorff2015a} applies and we obtain the assertion.
\end{proof}

We define \[\mathcal{H}^\infty(\mathbb{C}_{\Re>\nu_0};L(H,K))\coloneqq \{\mathcal{M}\in \mathcal{H}(\mathbb{C}_{\Re>\nu_0};L(H,K)); \mathcal{M} \text{ bounded}\},\] which turns into a Banach space if endowed with the supremum norm.

\begin{proposition}\label{prop:hinfty} Assume the hypotheses of Theorem~\ref{thm:st}. Then 
\[
   \Big[\mathbb{C}_{\Re>\nu_0} \ni z \mapsto \big(\mathcal{M}(z)+A\big)^{-1} \in L(H)\Big] \in \mathcal{H}^\infty(\mathbb{C}_{\Re>\nu};L(H)).
\] 
\end{proposition}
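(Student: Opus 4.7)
The plan is twofold: first establish the uniform bound $\|(\mathcal{M}(z)+A)^{-1}\|\leq 1/c$ on the relevant half-plane, then verify that $z\mapsto (\mathcal{M}(z)+A)^{-1}$ is analytic. Both parts are essentially bookkeeping on top of the solution theory already invoked in Theorem~\ref{thm:st}.

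\textbf{Step 1 (Uniform bound).} For each $z\in\mathbb{C}_{\Re>\nu_0}$, the skew-self-adjointness of $A$ gives $\Re\langle A\phi,\phi\rangle=0$ for all $\phi\in\dom(A)$, so combined with the coercivity hypothesis one has $\Re\langle(\mathcal{M}(z)+A)\phi,\phi\rangle\geq c\|\phi\|^2$, and analogously for the adjoint. Consequently [EGW17\_D2N, Lemma 2.12] (exactly as used in the proof of Theorem~\ref{thm:st}) yields continuous invertibility of $\mathcal{M}(z)+A$ together with $\|(\mathcal{M}(z)+A)^{-1}\|\leq 1/c$, uniformly in $z$.

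\textbf{Step 2 (Analyticity).} For $z,z_0\in\mathbb{C}_{\Re>\nu_0}$, I would establish the resolvent-type identity
\[
   (\mathcal{M}(z)+A)^{-1}-(\mathcal{M}(z_0)+A)^{-1}=(\mathcal{M}(z_0)+A)^{-1}\bigl[\mathcal{M}(z_0)-\mathcal{M}(z)\bigr](\mathcal{M}(z)+A)^{-1},
\]
obtained by multiplying the left-hand side by $(\mathcal{M}(z_0)+A)$ on the left and by $(\mathcal{M}(z)+A)$ on the right; the manipulation is legal because the ranges of both inverses lie in $\dom(A)$, which equals the common domain of $\mathcal{M}(z)+A$ and $\mathcal{M}(z_0)+A$ (since $\mathcal{M}(z),\mathcal{M}(z_0)\in L(H)$). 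Together with the Step~1 bound, this identity immediately gives norm-continuity of $z\mapsto(\mathcal{M}(z)+A)^{-1}$, since $\|\mathcal{M}(z)-\mathcal{M}(z_0)\|\to 0$ as $z\to z_0$ by the analyticity of $\mathcal{M}$.

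Dividing the identity by $z-z_0$ and letting $z\to z_0$, the factor $(\mathcal{M}(z_0)-\mathcal{M}(z))/(z-z_0)$ converges in $L(H)$ to $-\mathcal{M}'(z_0)$ by the analyticity of $\mathcal{M}$, while the other two resolvent factors remain uniformly bounded and the right-hand factor converges in operator norm to $(\mathcal{M}(z_0)+A)^{-1}$ by the continuity just established. Hence the difference quotient converges in $L(H)$ to
\[
   -(\mathcal{M}(z_0)+A)^{-1}\mathcal{M}'(z_0)(\mathcal{M}(z_0)+A)^{-1},
\]
so $z\mapsto (\mathcal{M}(z)+A)^{-1}$ is complex Fréchet-differentiable at every $z_0\in\mathbb{C}_{\Re>\nu_0}$, hence analytic. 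Combined with the $1/c$ bound from Step~1, this places the map in $\mathcal{H}^\infty(\mathbb{C}_{\Re>\nu_0};L(H))\subseteq\mathcal{H}^\infty(\mathbb{C}_{\Re>\nu};L(H))$, as claimed.

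\textbf{Main obstacle.} There is no genuine obstacle: the only subtlety is verifying the resolvent identity carefully at the level of domains, but since $A$ is skew-self-adjoint and $\mathcal{M}(z),\mathcal{M}(z_0)$ are everywhere defined and bounded, the domain issues collapse and the identity is an equality of bounded operators. An alternative route is to observe that $z\mapsto \langle(\mathcal{M}(z)+A)^{-1}\phi,\psi\rangle$ is scalar analytic for every $\phi,\psi\in H$ (by the same identity tested against vectors) and then invoke Dunford's theorem that weak analyticity plus local norm boundedness implies norm analyticity; the direct argument above, however, is slightly more transparent.
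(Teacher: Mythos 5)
Your proof is correct and follows essentially the same path as the paper: the uniform bound $\|(\mathcal{M}(z)+A)^{-1}\|\le 1/c$ comes from \cite[Lemma 2.12]{EGW17_D2N}, and the analyticity is the ``composition of analytic maps is analytic'' principle (the paper's phrasing), which you verify explicitly via the resolvent-type identity and the difference quotient. The extra detail is fine and the domain bookkeeping is handled correctly.
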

\begin{proof}
 The claim follows from \cite[Lemma 2.12]{EGW17_D2N} and the fact that composition of analytic mappings are analytic again.
\end{proof}

It will be the next theorem, which forms the basic result for the convergence results to be followed in the next section.

\begin{theorem}\label{thm:conv} Let $\nu_0\in\mathbb{R}_{\ge 0}$, $H,K$ separable Hilbert spaces. Let $(S_\epsilon)_{\epsilon>0}$ be a bounded family in $\mathcal{H}^\infty(\mathbb{C}_{\Re>\nu_0};L(H,K))$ and let $S_0\in \mathcal{H}^\infty(\mathbb{C}_{\Re>\nu_0};L(H,K))$. Assume that for all $z\in \mathbb{C}_{\Re>\nu_0}$ we have
\[
   S_\epsilon (z) \to S_0(z)\quad(\epsilon \to 0)
\]
in the weak operator topology of $L(H,K)$.

Then $S_\epsilon(\partial_{t,\nu})\to S_0(\partial_{t,\nu})$ in the weak operator topology of $L(L_\nu^2(\mathbb{R};H),L_\nu^2(\mathbb{R};K))$. 
\end{theorem}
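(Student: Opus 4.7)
The plan is to diagonalise $\partial_{t,\nu}$ via the Fourier--Laplace transformation $\mathcal{L}_\nu$ and reduce the claim to a dominated-convergence statement about multiplication operators. By construction,
\begin{equation*}
 S_\varepsilon(\partial_{t,\nu}) = \mathcal{L}_\nu^* M_\varepsilon \mathcal{L}_\nu, \qquad (M_\varepsilon \phi)(\xi) := S_\varepsilon(\i\xi+\nu)\phi(\xi),
\end{equation*}
where $M_\varepsilon \in L(L^2(\mathbb{R};H), L^2(\mathbb{R};K))$ with $\|M_\varepsilon\| \le \|S_\varepsilon\|_{\mathcal{H}^\infty}$ (a trivial essential-sup estimate). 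Since $\mathcal{L}_\nu$ is unitary, weak-operator convergence $S_\varepsilon(\partial_{t,\nu}) \to S_0(\partial_{t,\nu})$ in $L(L_\nu^2(\mathbb{R};H), L_\nu^2(\mathbb{R};K))$ is equivalent to weak-operator convergence $M_\varepsilon \to M_0$ in $L(L^2(\mathbb{R};H), L^2(\mathbb{R};K))$.

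For the latter I would fix $f \in L^2(\mathbb{R};H)$ and $g \in L^2(\mathbb{R};K)$ and examine
\begin{equation*}
 \langle M_\varepsilon f, g\rangle = \int_{\mathbb{R}} \langle S_\varepsilon(\i\xi+\nu) f(\xi), g(\xi)\rangle_K \, d\xi.
\end{equation*}
For each $\xi \in \mathbb{R}$ the pointwise weak-operator hypothesis, applied at $z = \i\xi + \nu \in \mathbb{C}_{\Re>\nu_0}$ with $h = f(\xi)$ and $k = g(\xi)$, yields
\begin{equation*}
 \langle S_\varepsilon(\i\xi+\nu) f(\xi), g(\xi)\rangle_K \to \langle S_0(\i\xi+\nu) f(\xi), g(\xi)\rangle_K\qquad (\varepsilon\to 0).
\end{equation*}
Setting $C := \sup_{\varepsilon>0} \|S_\varepsilon\|_{\mathcal{H}^\infty} < \infty$, the integrand is majorised in modulus by $\xi \mapsto C\|f(\xi)\|_H \|g(\xi)\|_K$, which lies in $L^1(\mathbb{R})$ by Cauchy--Schwarz. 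Dominated convergence then closes the argument.

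I expect no serious obstacle. The only technical point to verify is measurability of the integrand: the analyticity of $S_\varepsilon$ makes $\xi\mapsto S_\varepsilon(\i\xi+\nu)$ norm-continuous, and combined with the Bochner measurability of $f$ and $g$ (for which the separability assumption on $H$ and $K$ is used) this yields measurability of the scalar pairing. In essence, the proof is a two-line combination of (i) the unitary equivalence provided by $\mathcal{L}_\nu$ and (ii) Lebesgue's dominated convergence theorem applied on the Fourier image.
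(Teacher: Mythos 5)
Your proof is correct, and it takes a genuinely different (and arguably more elementary) route than the paper. The paper instead invokes the compactness result Theorem~\ref{thm:com} (imported from \cite{W14_FE} and \cite{W12_HO}): it extracts a subsequence and a limit $T\in\mathcal{H}^\infty$ such that $S_{\epsilon_k}(z)\to T(z)$ pointwise and $S_{\epsilon_k}(\partial_{t,\nu})\to T(\partial_{t,\nu})$, identifies $T=S_0$ by uniqueness of weak limits, and then closes with the subsequence principle. Your argument is direct: you unwrap the functional calculus explicitly as $S_\epsilon(\partial_{t,\nu})=\mathcal{L}_\nu^*M_\epsilon\mathcal{L}_\nu$, use unitarity of $\mathcal{L}_\nu$ to reduce to the multiplication operators $M_\epsilon$, and apply dominated convergence on the Fourier side, with the uniform $\mathcal{H}^\infty$ bound supplying the dominating function. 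In effect you reprove the ``moreover'' implication contained in the black box Theorem~\ref{thm:com} rather than citing it, which makes the argument self-contained and avoids any appeal to compactness and subsequence extraction. The paper's route has the advantage of reusing existing machinery it already needs elsewhere (Theorem~\ref{thm:com} is stated anyway and is useful for $H$-compactness results where no a~priori limit is given), whereas your proof is cleaner when, as here, the limit $S_0$ is already in hand. One small expository point: the paper defines $\mathcal{M}(\i\m+\nu)$ initially on $C_c(\mathbb{R};H)$, so if you prefer you can run dominated convergence for $f,g$ in that dense class where measurability is immediate (the integrand being continuous and compactly supported) and then extend by the uniform operator bound; but your direct measurability argument via norm-continuity of $\xi\mapsto S_\epsilon(\i\xi+\nu)$ and strong measurability of $f,g$ is equally fine and uses the separability assumption in the intended way.
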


For the proof of Theorem~\ref{thm:conv}, we shall use the following result.

\begin{theorem}[{{\cite[Theorem 4.3]{W14_FE} and \cite[Lemma 3.5]{W12_HO}}}]\label{thm:com} Let $\Omega\subseteq \mathbb{C}$ open, $H,K$ separable Hilbert spaces. Let $(S_\epsilon)_\epsilon$ be a bounded family in $\mathcal{H}^\infty(\mathbb{C}_{\Re>\nu_0};L(H,K))$. Then there exists \[T\in \mathcal{H}^\infty(\mathbb{C}_{\Re>\nu_0};L(H,K))\] and a nullsequence $(\epsilon_k)_{k\in\mathbb{N}}$ in $(0,\infty)$ satisfying for all $K\subseteq \Omega$ compact
\[
   S_{\epsilon_k}(z) \to T(z)\quad(k\to\infty, z\in K)
\]
in the weak operator topology of $L(H,K)$. Moreover, if $\Omega=\mathbb{C}_{\Re>\nu_0}$, then
\[
   S_{\epsilon_k}(\partial_{t,\nu})\to T(\partial_{t,\nu})\quad (k\to\infty)
\]
in the weak operator topology of $L(L_\nu^2(\mathbb{R};H),L_\nu^2(\mathbb{R};K))$ for all $\nu>\nu_0$. 
\end{theorem}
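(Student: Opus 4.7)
My plan is to obtain the conclusion by a standard ``every subsequence has a further subsequence'' argument, using Theorem~\ref{thm:com} as the key extraction tool, and then identifying the limit via the hypothesis.

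First I would verify that the family $(S_\varepsilon(\partial_{t,\nu}))_{\varepsilon>0}$ is bounded in $L(L_\nu^2(\mathbb{R};H),L_\nu^2(\mathbb{R};K))$. This is immediate from the functional calculus definition $S_\varepsilon(\partial_{t,\nu})=\mathcal{L}_\nu^* S_\varepsilon(\i\m+\nu)\mathcal{L}_\nu$ together with the boundedness hypothesis on $(S_\varepsilon)_\varepsilon$ in $\mathcal{H}^\infty$, since multiplication operators inherit the supremum bound. Since $H$ and $K$ are separable, so are $L_\nu^2(\mathbb{R};H)$ and $L_\nu^2(\mathbb{R};K)$, and consequently the weak operator topology is metrizable on norm-bounded subsets of $L(L_\nu^2(\mathbb{R};H),L_\nu^2(\mathbb{R};K))$; this metrizability is what will allow us to upgrade subsequential convergence to convergence of the full family.

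Next, given any subsequence of $(\varepsilon)_{\varepsilon>0}$, I apply Theorem~\ref{thm:com} with $\Omega=\mathbb{C}_{\Re>\nu_0}$ to the corresponding sub-family of $(S_\varepsilon)$, which by assumption is bounded in $\mathcal{H}^\infty(\mathbb{C}_{\Re>\nu_0};L(H,K))$. This yields a further subsequence $(\varepsilon_k)_k$ and some $T\in\mathcal{H}^\infty(\mathbb{C}_{\Re>\nu_0};L(H,K))$ such that $S_{\varepsilon_k}(z)\to T(z)$ in WOT pointwise on $\mathbb{C}_{\Re>\nu_0}$ (uniformly on compacta), and such that
\[
   S_{\varepsilon_k}(\partial_{t,\nu})\to T(\partial_{t,\nu})\quad(k\to\infty)
\]
in the weak operator topology of $L(L_\nu^2(\mathbb{R};H),L_\nu^2(\mathbb{R};K))$. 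The hypothesis of Theorem~\ref{thm:conv} forces $T(z)=S_0(z)$ for every $z\in\mathbb{C}_{\Re>\nu_0}$, since WOT limits are unique. Hence $T=S_0$, and so $S_{\varepsilon_k}(\partial_{t,\nu})\to S_0(\partial_{t,\nu})$ in WOT along the chosen further subsequence.

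Finally, to pass from subsequential convergence to convergence of the entire family, I invoke metrizability of WOT on bounded sets: since every subsequence of the bounded family $(S_\varepsilon(\partial_{t,\nu}))_\varepsilon$ admits a further subsequence converging to the same limit $S_0(\partial_{t,\nu})$, the full family must converge in WOT to $S_0(\partial_{t,\nu})$. The only genuine obstacle is conceptual rather than technical: one needs to be sure that Theorem~\ref{thm:com} does not merely extract a sequential WOT-limit of the operators but also of the functional calculus applied to $\partial_{t,\nu}$---fortunately this is exactly what the second half of Theorem~\ref{thm:com} provides, so the argument is compact.
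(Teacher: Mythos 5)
Your proposal is circular: the statement you were asked to prove \emph{is} Theorem~\ref{thm:com}, yet you invoke Theorem~\ref{thm:com} as your ``key extraction tool''. What you have actually written is (essentially) the paper's proof of Theorem~\ref{thm:conv} --- you assume the additional hypothesis $S_\epsilon(z)\to S_0(z)$ in the weak operator topology for every $z$ (which appears in Theorem~\ref{thm:conv} but not in Theorem~\ref{thm:com}), use the compactness theorem as a black box to extract a subsequence, identify the limit $T$ with $S_0$ by uniqueness of WOT limits, and then upgrade to convergence of the whole family by the subsequence principle. None of that addresses the content of Theorem~\ref{thm:com} itself, which is a pure compactness statement: no candidate limit is given, and the conclusion is only the \emph{existence} of some $T\in\mathcal{H}^\infty(\mathbb{C}_{\Re>\nu_0};L(H,K))$ and of a nullsequence along which $S_{\epsilon_k}(z)\to T(z)$ locally uniformly in the weak operator topology, together with the transfer of this convergence to the functional calculus $S_{\epsilon_k}(\partial_{t,\nu})\to T(\partial_{t,\nu})$.

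A genuine proof has to supply the extraction and the identification of the limit as an $\mathcal{H}^\infty$ function. The standard route (and the one behind the cited references \cite{W14_FE,W12_HO}) is: for fixed $x\in H$, $y\in K$ the scalar functions $z\mapsto\langle S_\epsilon(z)x,y\rangle$ form a uniformly bounded family of analytic functions on $\mathbb{C}_{\Re>\nu_0}$, hence a normal family by Montel's theorem; using separability of $H$ and $K$ one diagonalizes over a countable dense set of pairs $(x,y)$ to obtain a single nullsequence $(\epsilon_k)_k$ along which all these scalar functions converge locally uniformly; the limits assemble (by uniform boundedness and density) into a bounded sesquilinear form for each $z$, hence into an operator $T(z)$, and local uniform convergence of the scalar functions shows $z\mapsto T(z)$ is analytic with the same $\mathcal{H}^\infty$ bound. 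The second assertion then requires a separate argument showing that pointwise WOT convergence of the symbols, together with the uniform bound, implies WOT convergence of $S_{\epsilon_k}(\partial_{t,\nu})$ via the Fourier--Laplace unitary equivalence with multiplication operators and dominated convergence. Note also that the paper itself does not prove this theorem but quotes it; in any case your argument cannot stand as a proof of it, since it presupposes the very conclusion it is meant to establish.
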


\begin{proof}[Proof of Theorem \ref{thm:conv}] Choose $(\epsilon_k)_k$ and $T$ according to Theorem~\ref{thm:com}. Then, for $z\in\mathbb{C}_{\Re>\nu_0}$, we obtain
\[
   S_0 (z) = (\tau_{\textnormal{w}}\text{-})\lim_{k\to\infty} S_{\epsilon_k}(z) =  T(z).
\]
Moreover, we get $S_{\epsilon_k}(\partial_{t,\nu})\to S_0(\partial_{t,\nu})$ in the weak operator topology of $L(L_\nu^2(\mathbb{R};H),L_\nu^2(\mathbb{R};K))$. The subsequence principle concludes the proof. 
\end{proof}

\subsection{Applications to time-dependent-type problems}\label{sec:appl}

We shall treat a subclass of evolutionary equations discussed in the previous section. The particular cases treated here cover the heat equation, the wave equation, the Maxwell's equations and even systems of mixed type formulated on possibly rough domains, which do not need to satisfy any boundedness conditions, as we shall demonstrate in the subsequent sections by means of examples.

More specifically, in this section, we confine ourselves to the following class of problems. Define $H_0\coloneqq (H_d \otimes H_s)^m$, $H_1\coloneqq (H_d\otimes H_s)^n$ and $H=H_0\oplus H_1$. Let $\nu_0\geq 0$,  $\mathcal{M}_k\colon \mathbb{C}_{\Re>\nu_0}\to L(H_k)$ be analytic and assume that $\Re \mathcal{M}_k(z)\geq c$ for all $z\in\mathbb{C}_{\Re>\nu_0}$, $k\in\{0,1\}$ and some $c>0$. We need to restrict ourselves to a certain class of right-hand sides. For this we set
\begin{align*}
   \mathcal{H}_0\coloneqq \kar(C_s)\cap \bigcap_{\epsilon\in \mathbb{R}\setminus\{0\}}\kar(\mathcal{T}_\epsilon - 1) \subseteq H_0, \quad    \mathcal{H}_1\coloneqq \kar(C_s^*)\cap \bigcap_{\epsilon\in \mathbb{R}\setminus\{0\}}\kar(\mathcal{T}_\epsilon - 1) \subseteq H_1.
\end{align*}
We remark that in the applications, e.g., to stochastic homogenization (see Sections \ref{sec:maxwell} and \ref{sec:wave}), the above choice allows the consideration of deterministic right-hand sides (that is a standard assumption in stochastic homogenization), i.e., $L^2(Q)\otimes \C\subseteq \mathcal{H}_i$.  

For $(f,g)\in \mathcal{H}:=\mathcal{H}_0\oplus \mathcal{H}_1$, we consider for $\epsilon>0$
\begin{equation}\label{eq:tdp}
  \left(\mathcal{T}_{-\epsilon}\begin{pmatrix} \mathcal{M}_0(z) & 0\\ 0 & \mathcal{M}_1(z)\end{pmatrix}\mathcal{T}_{\epsilon}+\begin{pmatrix} 0 & C_d^*\\ -C_d & 0\end{pmatrix}\right)\begin{pmatrix} u_\epsilon\\ q_\epsilon \end{pmatrix} = \begin{pmatrix} f \\ g \end{pmatrix}.
\end{equation}
Note that the operator 
\[
   B_\epsilon\coloneqq\overline{\left(\mathcal{T}_{-\epsilon}\begin{pmatrix} \mathcal{M}_0(\partial_{t,\nu}) & 0\\ 0 & \mathcal{M}_1(\partial_{t,\nu})\end{pmatrix}\mathcal{T}_{\epsilon}+\begin{pmatrix} 0 & C_d^*\\ -C_d & 0\end{pmatrix}\right)}
\]
is continuously invertible in $L_\nu^2(\mathbb{R};H)$ by Theorem~\ref{thm:st} applied to $\mathcal{M}=\mathcal{T}_{-\epsilon}\diag(\mathcal{M}_0,\mathcal{M}_1)\mathcal{T}_{\epsilon}$ and $A=\left(\begin{smallmatrix} 0 & C_d^*\\ -C_d & 0\end{smallmatrix}\right)$. We shall also introduce
\begin{equation}\label{eq:Sez}
   S_\epsilon (z) \coloneqq \overline{\left(\mathcal{T}_{-\epsilon}\begin{pmatrix} \mathcal{M}_0(z) & 0\\ 0 & \mathcal{M}_1(z)\end{pmatrix}\mathcal{T}_{\epsilon}+\begin{pmatrix} 0 & C_d^*\\ -C_d & 0\end{pmatrix}\right)}^{-1}
\end{equation}
for all $\epsilon>0$ and $z\in\mathbb{C}_{\Re>\nu_0}$. By Proposition \ref{prop:hinfty}, we have that $(S_\epsilon)_{\epsilon>0}$ is a bounded family in $\mathcal{H}^\infty(\mathbb{C}_{\Re>\nu_0};L(H))$.

Our aim in this section will be to construct an operator-valued function $S_0\colon z\mapsto S_0(z)$ such that
\[
  \mathcal{T}_\epsilon S_\epsilon (z) \to S_0(z)\quad(\epsilon\to 0)
\]
in the weak operator topology (in an appropriate space). It will turn out that $S_0(z)$ can be written as $S_0(z)=\left(\tilde M(z)+\left(\begin{smallmatrix} 0 & C_d^*\\ -C_d & 0\end{smallmatrix}\right)\right)^{-1}$ for suitable $\tilde{M}(z)$ to be described explicitly below. Finally, we shall conclude with an application of Theorem~\ref{thm:conv} to obtain a homogenization result for the full time-dependent problem.

We will suppress the dependence of $u_\epsilon$ and $q_\epsilon$ on $z$ for the time being; at the end of this section we come back to this.

\begin{remark}
 We refer to \cite{A11,PTW15_WP_P,W16_H} and the references therein for an instance of the many examples that are covered by this equation. We will consider some special cases in the next section. The rather involved homogenization result for a suitable class of non-diagonal $M$ is treated in \cite{W16_HPDE}. In that paper, however, a compactness assumption had to be introduced, which we do not assume here. Moreover, in \cite{W16_HPDE}
 the local problem is given implicitly and there is no criterion ensuring convergence without the extraction of subsequences. However, in the framework of so-called `nonlocal $H$-convergence' a convergence result for Maxwell's equations was shown in \cite{W18_NHC}. For a setting strictly confined to periodic problems defined on the whole Euclidean space as underlying domain, quantitative results can be found in \cite{CW17_FH}.
\end{remark}

First of all we establish existence and boundedness of $(u_\epsilon,q_\epsilon)_\epsilon$ in $\dom(C_d)\oplus \dom(C_d^*)$. The result is as plain as it is to establish the uniform bound in $H$. We provide some more details as follows.

\begin{proposition}\label{prop:wp} For all $\epsilon>0$, $(u_\epsilon,q_\epsilon)$ is well-defined as a solution of \eqref{eq:tdp}. Moreover, the family $(u_\epsilon,q_\epsilon)_{\epsilon}$ is uniformly bounded in $\dom(C_d)\oplus \dom(C_d^*)$. 
\end{proposition}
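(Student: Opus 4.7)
The plan is to treat \eqref{eq:tdp} pointwise in $z$ as a static problem in $H = H_0 \oplus H_1$, apply the coercivity argument underlying Theorem~\ref{thm:st}, and then extract the $\dom(C_d)\oplus\dom(C_d^*)$-bound by reading off the two block-rows of the equation.

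Fix $z \in \mathbb{C}_{\Re>\nu_0}$ and $\epsilon>0$. Because $\mathcal{T}_\epsilon$ is unitary with $\mathcal{T}_\epsilon^* = \mathcal{T}_{-\epsilon}$ by \eqref{eq:inverse}, the hypothesis $\Re \mathcal{M}_k(z) \geq c$ for $k\in\{0,1\}$ is preserved under conjugation:
\[
  \Re\bigl(\mathcal{T}_{-\epsilon}\,\diag(\mathcal{M}_0(z),\mathcal{M}_1(z))\,\mathcal{T}_\epsilon\bigr) \geq c \text{ on } H,
\]
with the \emph{same} constant $c$ for every $\epsilon$. The block operator $A := \left(\begin{smallmatrix} 0 & C_d^* \\ -C_d & 0 \end{smallmatrix}\right)$ on $\dom(A) = \dom(C_d)\oplus\dom(C_d^*)$ is skew-selfadjoint. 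The standard accretivity argument that underlies Theorem~\ref{thm:st} then shows that $B_\epsilon(z) := \mathcal{T}_{-\epsilon}\diag(\mathcal{M}_0(z),\mathcal{M}_1(z))\mathcal{T}_\epsilon + A$ is closed (as a bounded perturbation of $A$), continuously invertible, with $\|B_\epsilon(z)^{-1}\|\leq 1/c$ uniformly in $\epsilon$. Hence \eqref{eq:tdp} admits a unique solution $(u_\epsilon,q_\epsilon) \in \dom(A)$ satisfying the uniform $H$-estimate $\|u_\epsilon\|^2 + \|q_\epsilon\|^2 \leq c^{-2}\bigl(\|f\|^2+\|g\|^2\bigr)$.

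The graph-norm bounds now follow by reading off the two components of \eqref{eq:tdp}:
\begin{align*}
  C_d^* q_\epsilon &= f - \mathcal{T}_{-\epsilon}\mathcal{M}_0(z)\mathcal{T}_\epsilon\, u_\epsilon,\\
  C_d u_\epsilon  &= \mathcal{T}_{-\epsilon}\mathcal{M}_1(z)\mathcal{T}_\epsilon\, q_\epsilon - g.
\end{align*}
Using unitarity of $\mathcal{T}_{\pm\epsilon}$ and the boundedness of $\mathcal{M}_k(z)$, together with the uniform $H$-bound just established, both right-hand sides are bounded independently of $\epsilon$, giving the asserted uniform estimate in $\dom(C_d)\oplus\dom(C_d^*)$.

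No genuine obstacle is expected here: the key point is that unitarity of $\mathcal{T}_\epsilon$ is exactly what keeps the coercivity constant $c$ independent of $\epsilon$, so that Theorem~\ref{thm:st}'s static counterpart applies with a bound that does not deteriorate; the conversion of the $H$-bound into a graph-norm bound is then purely algebraic. The constants of course depend on $z$ through $\|\mathcal{M}_k(z)\|$, but this dependence plays no role for the fixed-$z$ statement of the proposition and will only need to be tracked later when applying Theorem~\ref{thm:conv} to lift the analysis to the time-dependent setting.
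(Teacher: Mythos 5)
Your proof is correct and follows essentially the same route as the paper: the paper simply cites \cite[Lemma 2.12]{EGW17_D2N} applied to $M=\mathcal{T}_{-\epsilon}\mathcal{M}(z)\mathcal{T}_\epsilon$ and $A=\left(\begin{smallmatrix}0 & C_d^*\\ -C_d & 0\end{smallmatrix}\right)$, which encapsulates the coercivity-plus-skew-selfadjointness argument you spell out. Your two observations---that unitary conjugation preserves the coercivity constant $c$ uniformly in $\epsilon$, and that the graph-norm bound is read off algebraically from the two block-rows---are precisely what that cited lemma delivers.
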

\begin{proof}
 Note that $\begin{pmatrix} 0 & C_d^*\\ -C_d & 0\end{pmatrix}$ is skew-self-adjoint and $\Re \mathcal{T}_{-\epsilon}\mathcal{M}(z)\mathcal{T}_{\epsilon}\geq c$ for all $z\in\mathbb{C}_{\Re>\nu}$, $\epsilon>0$ and some $c>0$. Hence, the assertion follows from \cite[Lemma 2.12]{EGW17_D2N} applied to $M=\mathcal{T}_{-\epsilon}\mathcal{M}(z)\mathcal{T}_{\epsilon}$ and $A=\begin{pmatrix} 0 & C_d^*\\ -C_d & 0\end{pmatrix}$.
\end{proof}

The main result of this section is presented next, that is, we will now present the main step to establish convergence of $(\mathcal{T}_\epsilon S_\epsilon(z))_\epsilon$. We shall show convergence of a subsequence first. Then, we will prove uniqueness of the limit, so that the following theorem actually also holds without the choice of subsequences. Below, $P_{\kar{(C_s)}}$ and $P_{\kar{(C_s^*)}}$ denote the orthogonal projections to $\kar{(C_s)}$ and $\kar{(C_s^*)}$.

\begin{theorem}\label{thm:th} Assume \eqref{eq:core}--\eqref{eq:com2}. For $\epsilon>0$ let $(u_\epsilon,q_\epsilon)_{\epsilon}$ be given by \eqref{eq:tdp}. Then (a subsequence of) $(u_\epsilon)_\epsilon$, $(q_\epsilon)_{\epsilon}$ weakly 2-scale converge to some $u\in \dom(C_d)\cap \kar(C_s)$, $q\in \dom(C_d^*)\cap \kar(C_s^*)$. $(u,q)$ satisfies the following system of equations
 \begin{align}
  P_{\kar{(C_s)}}\mathcal{M}_0(z) u + P_{\kar{(C_s)}}C_d^*q &=f  \label{eq:th1}
  \\ P_{\kar{(C_s^*)}} \mathcal{M}_1(z)q - P_{\kar{(C_s^*)}}C_du & = g. \label{eq:th2}
 \end{align}
\end{theorem}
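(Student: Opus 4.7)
The plan is to combine the uniform bounds from Proposition~\ref{prop:wp} with the abstract compactness result Theorem~\ref{prop:wo3}, and then to pass to the limit in \eqref{eq:tdp} by pairing both rows with carefully chosen test functions of the form $\mathcal{T}_{-\epsilon}\phi$, where $\phi$ is taken from $\dom(C_d)\cap\kar(C_s)$ for the first equation and from $\dom(C_d^*)\cap\kar(C_s^*)$ for the second -- precisely the subspaces on which the projections appearing on the right-hand sides of \eqref{eq:th1}--\eqref{eq:th2} act.

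First I would invoke Proposition~\ref{prop:wp} to get that $(u_\epsilon)_\epsilon$ and $(q_\epsilon)_\epsilon$ are uniformly bounded in $\dom(C_d)$ and $\dom(C_d^*)$ respectively. Theorem~\ref{prop:wo3}(a),(b) then yields, along a (not relabelled) subsequence, weak 2-scale limits $u\in\dom(C_d)\cap\kar(C_s)$ of $(u_\epsilon)_\epsilon$ and $q\in\dom(C_d^*)\cap\kar(C_s^*)$ of $(q_\epsilon)_\epsilon$. The 2-scale limits of $C_du_\epsilon$ and $C_d^*q_\epsilon$ are not needed explicitly.

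The heart of the argument is the following limit computation. Fix $\phi\in\dom(C_d)\cap\kar(C_s)$ and pair the first row of \eqref{eq:tdp} with $\mathcal{T}_{-\epsilon}\phi$. Unitarity of $\mathcal{T}_\epsilon$ (see \eqref{eq:inverse}) turns the material-law term into $\langle \mathcal{M}_0(z)\mathcal{T}_\epsilon u_\epsilon,\phi\rangle$, which converges to $\langle \mathcal{M}_0(z)u,\phi\rangle$ since $\mathcal{M}_0(z)$ is bounded and $\mathcal{T}_\epsilon u_\epsilon\rightharpoonup u$. For the operator term, Lemma~\ref{lem:TPPT} ensures $\mathcal{T}_{-\epsilon}\phi\in\kar(C_s)$, so that \eqref{eq:com1} degenerates to $C_d\mathcal{T}_{-\epsilon}\phi=\mathcal{T}_{-\epsilon}C_d\phi$ (the otherwise singular contribution $\tfrac{1}{\epsilon}C_s\mathcal{T}_{-\epsilon}\phi$ vanishes). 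Consequently
\[
\langle C_d^*q_\epsilon,\mathcal{T}_{-\epsilon}\phi\rangle \;=\; \langle \mathcal{T}_\epsilon q_\epsilon,C_d\phi\rangle \;\to\; \langle q,C_d\phi\rangle \;=\; \langle C_d^*q,\phi\rangle.
\]
On the right-hand side, the invariance $\mathcal{T}_\epsilon f=f$ built into the definition of $\mathcal{H}_0$ gives $\langle f,\mathcal{T}_{-\epsilon}\phi\rangle=\langle f,\phi\rangle$. Sending $\epsilon\to 0$ thus yields $\langle \mathcal{M}_0(z)u+C_d^*q-f,\phi\rangle=0$ for every $\phi\in\dom(C_d)\cap\kar(C_s)$.

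To turn this into \eqref{eq:th1} I would next argue that $\dom(C_d)\cap\kar(C_s)$ is dense in $\kar(C_s)$. This exploits the tensor-product structure $\kar(C_s)=H_d^m\otimes\kar(C_s|_{H_s^m})$ inherited from the canonical extension of $C_s$, together with the density $D_1\otimes H_s^m\subseteq\dom(C_d)$ from \eqref{eq:core} and a variant of Lemma~\ref{lem:proj} showing that the projection onto $\kar(C_s)$ preserves $\dom(C_d)$. The resulting orthogonality $\mathcal{M}_0(z)u+C_d^*q-f\perp\kar(C_s)$, combined with $f\in\kar(C_s)$, is exactly \eqref{eq:th1}. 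Equation \eqref{eq:th2} is obtained by a symmetric computation, testing the second row of \eqref{eq:tdp} against $\mathcal{T}_{-\epsilon}\psi$ for $\psi\in\dom(C_d^*)\cap\kar(C_s^*)$, using \eqref{eq:com1b} in place of \eqref{eq:com1} and the invariance $\mathcal{T}_\epsilon g=g$ from $g\in\mathcal{H}_1$. The chief subtlety -- and the reason why pairing against $\mathcal{T}_{-\epsilon}\phi$ rather than against $\phi$ is essential -- is that $\mathcal{T}_{-\epsilon}\phi$ simultaneously strips off the $\mathcal{T}_\epsilon$ inside the material law (via unitarity) and, thanks to the restriction $\phi\in\kar(C_s)$, neutralises the $1/\epsilon$-singular correction coming from the commutator relation \eqref{eq:com1}.
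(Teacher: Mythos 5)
Your proposal is correct and rests on the same scaffolding as the paper's proof (Proposition~\ref{prop:wp} for bounds, Theorem~\ref{prop:wo3}(a),(b) for compactness and regularity of $u$, $q$, and passing to the limit in \eqref{eq:tdp} against $\mathcal{T}_{-\epsilon}$ of elements of $\kar(C_s)$ resp.\ $\kar(C_s^*)$), but you handle the differential term by a genuinely different mechanism. The paper tests against $\mathcal{T}_{-\epsilon}P_{\kar(C_s)}\phi$ for \emph{arbitrary} $\phi\in(H_d\otimes H_s)^m$, uses the full conclusion of Theorem~\ref{prop:wo3}(b) that $\mathcal{T}_\epsilon C_d^*q_\epsilon\rightharpoonup C_d^*q+w$ with $w\in\overline{\rge}(C_s^*)$, and observes that the corrector part $w$ is killed because $\overline{\rge}(C_s^*)\perp\kar(C_s)$; no integration by parts and no extra density is required. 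You instead restrict to $\phi\in\dom(C_d)\cap\kar(C_s)$ so that the $\tfrac{1}{\epsilon}$-singularity in \eqref{eq:com1} is neutralised and $C_d^*$ can be thrown onto the test function, which avoids identifying the 2-scale limits of $C_du_\epsilon$ and $C_d^*q_\epsilon$, at the price of a density step ($\dom(C_d)\cap\kar(C_s)$ dense in $\kar(C_s)$, symmetrically for $C_d^*$/$\kar(C_s^*)$). That density step is correct and elementary (it is already implicit inside the proof of Theorem~\ref{prop:wo3}), so both routes cost roughly the same; yours dispenses with the auxiliary $v$, $w$, while the paper's is slightly more direct given that Theorem~\ref{prop:wo3}(a),(b) is invoked anyway. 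One small notational slip in your density argument: the tensor decomposition should read $\kar(C_s)=H_d\otimes\kar(\tilde C_s)$ with $\tilde C_s$ the un-extended operator on $H_s^m$, viewed as a subspace of $(H_d\otimes H_s)^m$, not ``$H_d^m\otimes\kar(C_s|_{H_s^m})$''; the intended statement and the resulting density of $D_1\otimes\kar(\tilde C_s)$ in $\kar(C_s)$ are nonetheless correct.
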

\begin{proof}
  By Proposition \ref{prop:wp}, $(u_\epsilon)_\epsilon$ and $(q_\epsilon)_\epsilon$ are bounded in $\dom(C_d)$ and $\dom(C_d^*)$, respectively. Hence, by Theorem \ref{prop:wo3} (a) and (b), we find subsequences of $(u_\epsilon)_\epsilon$ and $(q_\epsilon)_\epsilon$ as well as $u\in \dom(C_d)\cap \kar(C_s)$, $v\in \overline{\rge}(C_s)$, $q\in \dom(C_d^*)\cap \kar(C_s^*)$ and $w\in \overline{\rge}(C_s^*)$ such that $(u_\epsilon)_\epsilon$, $(q_\epsilon)_\epsilon$, $(C_du_\epsilon)_\epsilon$, and $(C_d^*q_\epsilon)_\epsilon$ weakly 2-scale converge to $u$, $q$, $C_du+v$, and $C_d^*q+w$, respectively. Next, using \eqref{eq:tdp}, we have for all $\phi \in (H_d\otimes H_s)^m$ and suitable $\epsilon>0$
  \begin{align*}
     \langle f,\phi\rangle & =\langle \mathcal{T}_{\epsilon}  f, \phi\rangle
     \\ & = \langle P_{\kar{(C_s)}} f,\mathcal{T}_{-\epsilon}\phi\rangle
     \\ & =\langle \mathcal{T}_{-\epsilon}\mathcal{M}_0(z)\mathcal{T}_{\epsilon} u_\epsilon+C_d^*q_\epsilon,\mathcal{T}_{-\epsilon}P_{\kar{(C_s)}}\phi\rangle 
     \\ & = \langle \mathcal{M}_0(z)\mathcal{T}_{\epsilon} u_\epsilon, P_{\kar{(C_s)}}\phi\rangle+\langle \mathcal{T}_{\epsilon} C_d^*q_\epsilon,P_{\kar{(C_s)}}\phi\rangle
     \\ & \to \langle \mathcal{M}_0(z)u, P_{\kar{(C_s)}}\phi\rangle+\langle C_d^*q+w,P_{\kar{(C_s)}}\phi\rangle
     \\ & = \langle P_{\kar{(C_s)}}\mathcal{M}_0(z)u, \phi\rangle+\langle P_{\kar{(C_s)}} C_d^*q, \phi\rangle \quad(\epsilon\to 0).
  \end{align*}
  We obtain
  \[
     P_{\kar{(C_s)}}\mathcal{M}_0(z) u + P_{\kar{(C_s)}}C_d^*q = f,
  \]
  which yields \eqref{eq:th1}. \eqref{eq:th2} follows analogously. 
\end{proof}

The next result is a reformulation of the system \eqref{eq:th1}-\eqref{eq:th2}. For this, we introduce the canonical embeddings
\begin{align*}
   \iota_s &\colon \kar(C_s)  \hookrightarrow (H_d\otimes H_s)^m,\\
   \iota_{s^*}& \colon \kar(C_s^*)\hookrightarrow (H_d\otimes H_s)^n.
\end{align*}
Since $u=\iota_s \iota_s^* u$ and $q=\iota_{s^*} \iota_{s^*}^* q$, we obtain the following. 
\begin{corollary}\label{cor:sys} Let $(u,q)$ satisfy the system \eqref{eq:th1}-\eqref{eq:th2}. Then 
\begin{align}
  \iota_s^* \mathcal{M}_0(z) \iota_s \iota_s^* u + \iota_s^* C_d^* \iota_{s^*}\iota_{s^*}^*q & = \iota_s^*f, \label{eq:sys1} \\
  \iota_{s^*}^* M_1(z) \iota_{s^*}\iota_{s^*}^* q - \iota_{s^*}^* C_d \iota_s \iota_{s}^* u & = \iota_{s^*}^*g. \label{eq:sys2}
\end{align} 
\end{corollary}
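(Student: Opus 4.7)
The plan is a direct algebraic manipulation exploiting the structure of orthogonal projections onto closed subspaces, together with the fact (established in Theorem~\ref{thm:th}) that $u\in\kar(C_s)$ and $q\in\kar(C_s^*)$. The latter memberships give the identifications $u=\iota_s\iota_s^*u$ and $q=\iota_{s^*}\iota_{s^*}^*q$. Recall also the standard factorizations of the orthogonal projections through the canonical isometric embeddings, $P_{\kar(C_s)}=\iota_s\iota_s^*$ and $P_{\kar(C_s^*)}=\iota_{s^*}\iota_{s^*}^*$, together with $\iota_s^*\iota_s=1$ on $\kar(C_s)$ and $\iota_{s^*}^*\iota_{s^*}=1$ on $\kar(C_s^*)$.

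First I would apply $\iota_s^*$ on the left of equation \eqref{eq:th1}. The identity $\iota_s^*P_{\kar(C_s)}=\iota_s^*\iota_s\iota_s^*=\iota_s^*$ strips the projection from every term, yielding
\[
   \iota_s^*\mathcal{M}_0(z)u+\iota_s^*C_d^*q=\iota_s^*f.
\]
Substituting $u=\iota_s\iota_s^*u$ and $q=\iota_{s^*}\iota_{s^*}^*q$ into the left-hand side then delivers \eqref{eq:sys1} verbatim. The same procedure, applying $\iota_{s^*}^*$ to \eqref{eq:th2} and using $\iota_{s^*}^*P_{\kar(C_s^*)}=\iota_{s^*}^*$, produces \eqref{eq:sys2}.

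There is no genuine obstacle here: the corollary is a bookkeeping reformulation that restricts the original system—whose left-hand sides a priori range only in the projected subspaces $P_{\kar(C_s)}H_0\oplus P_{\kar(C_s^*)}H_1$—to an equivalent system posed on the (isomorphic) Hilbert spaces $\kar(C_s)\oplus\kar(C_s^*)$ themselves. Its value is to exhibit the effective operator as a block-structured operator on these kernels, which is the natural setting for reading off $\tilde M(z)$ referred to in the paragraph preceding the corollary and for drawing the analogy with the homogenized system in Theorem~\ref{thm:ellhom}.
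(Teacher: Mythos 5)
Your argument is correct and is essentially the same as the paper's, which simply invokes the identities $u=\iota_s\iota_s^*u$ and $q=\iota_{s^*}\iota_{s^*}^*q$ (together with $P_{\kar(C_s)}=\iota_s\iota_s^*$, $P_{\kar(C_s^*)}=\iota_{s^*}\iota_{s^*}^*$ and $\iota^*\iota=1$). You have merely spelled out the bookkeeping steps the paper leaves implicit.
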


\begin{remark}\label{rem:rep} The equations for $u$ and $q$ from Corollary \ref{cor:sys} can be written in the following block operator matrix form
\[
   \left(\begin{pmatrix}
      \widetilde{\mathcal{M}}_0(z) & 0 \\ 0 & \widetilde{\mathcal{M}}_1(z)
   \end{pmatrix} + \begin{pmatrix} 0 & \iota_s^*C_d^*\iota_{s^*} \\ -\iota_{s^*}^*C_d\iota_s & 0\end{pmatrix}\right)\begin{pmatrix}
  \tilde u \\ \tilde q  \end{pmatrix} = \begin{pmatrix} \tilde f\\ \tilde g \end{pmatrix},
\]
where $\widetilde{\mathcal{M}}_0(z)=\iota_s^* \mathcal{M}_0(z) \iota_s$, $\widetilde{\mathcal{M}}_1(z)=\iota_{s^*}^*\mathcal{M}_1(z)\iota_{s^*}$, $\tilde u= \iota_s^* u$, $\tilde q= \iota_{s^*}^* q$, $\tilde f= \iota_s^* f$ and $\tilde g = \iota_{s^*}^* g$. 
\end{remark}

From this remark we obtain the \emph{homogenized} problem as follows. Namely,
\[
   \left(\begin{pmatrix}
      \widetilde{\mathcal{M}}_0(z) & 0 \\ 0 & \widetilde{\mathcal{M}}_1(z)
   \end{pmatrix} + \begin{pmatrix} 0 & \widetilde{C_d}^* \\ -\widetilde{C_d} & 0\end{pmatrix}\right)\begin{pmatrix}
  w \\ r  \end{pmatrix} = \begin{pmatrix} h\\ j\end{pmatrix},
\]
where $\widetilde{C_d}=\overline{\iota_{s^*}^*C_d\iota_s}$ and $\widetilde{C_d}^*=\overline{\iota_s^*C_d^*\iota_{s^*}}$. In particular, we deduce that $(\tilde{u},\tilde{q})$ is uniquely determined, see \cite[Lemma 2.12]{EGW17_D2N}.

This observation combined with Corollary~\ref{cor:sys} yields that the claim of Theorem~\ref{thm:th} is true even \emph{without} choosing subsequences. In any case, we can formulate the following homogenization result for time-dependent homogenization problems, which is one of the main results of this article. We define $\mathcal{S}_\epsilon(z) \in L(\mathcal{H},H)$ by
 \[
      \mathcal{S}_\epsilon(z) (f,g)\coloneqq {S}_\epsilon(z)\begin{pmatrix} f\\ g \end{pmatrix},
 \]
 where $S_\epsilon(z)$ is given by \eqref{eq:Sez}. We shall also define $\mathcal{S}_0(z)\in L(\mathcal{H},H)$ via  \[
    \mathcal{S}_0(z) (f,g)\coloneqq  \begin{pmatrix}
    \iota_s \; &  0 \\ 0 \; & \iota_{s^*}
    \end{pmatrix} \left(\begin{pmatrix}
      \widetilde{\mathcal{M}}_0(z) & 0 \\ 0 & \widetilde{\mathcal{M}}_1(z)
   \end{pmatrix} + \begin{pmatrix} 0 & \widetilde{C_d}^* \\ -\widetilde{C_d} & 0\end{pmatrix}\right)^{-1}\begin{pmatrix} \iota_{s}^* \; & 0 \\ 0 \; & \iota_{s^*}^*  \end{pmatrix}\begin{pmatrix}
  f \\ g  \end{pmatrix}.
 \]
\begin{theorem}\label{thm:mr} Assume \eqref{eq:core}--\eqref{eq:com2} and that $H$ is separable. Then we have
\[
    \mathcal{T}_\epsilon\mathcal{S}_\epsilon(\partial_{t,\nu})\to \mathcal{S}_0(\partial_{t,\nu})\quad(\epsilon\to 0)
\]
 in the weak operator topology of $L(L_\nu^2(\mathbb{R};\mathcal{H}),L_\nu^2(\mathbb{R};H))$.
\end{theorem}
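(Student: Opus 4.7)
The plan is to reduce the claim to a pointwise-in-$z$ weak convergence statement via Theorem \ref{thm:conv}, applied to the operator-valued Hardy functions $z\mapsto \mathcal{T}_\epsilon\mathcal{S}_\epsilon(z)$ and $z\mapsto \mathcal{S}_0(z)$. To make this work, three ingredients have to be established: first, that $(\mathcal{T}_\epsilon\mathcal{S}_\epsilon)_\epsilon$ is a bounded family in $\mathcal{H}^\infty(\mathbb{C}_{\Re>\nu_0};L(\mathcal{H},H))$; second, that $\mathcal{S}_0\in \mathcal{H}^\infty(\mathbb{C}_{\Re>\nu_0};L(\mathcal{H},H))$; and third, that for every fixed $z\in \mathbb{C}_{\Re>\nu_0}$ one has $\mathcal{T}_\epsilon\mathcal{S}_\epsilon(z)\to \mathcal{S}_0(z)$ in the weak operator topology as $\epsilon\to 0$. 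Once these are in hand, Theorem \ref{thm:conv} (noting that $\mathcal{H}\subseteq H$ is separable as soon as $H$ is) delivers the conclusion.

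For the first ingredient, I would invoke Proposition \ref{prop:hinfty} applied with $\mathcal{M}=\mathcal{T}_{-\epsilon}\diag(\mathcal{M}_0,\mathcal{M}_1)\mathcal{T}_\epsilon$ and $A=\bigl(\begin{smallmatrix} 0 & C_d^* \\ -C_d & 0\end{smallmatrix}\bigr)$ to obtain $S_\epsilon\in \mathcal{H}^\infty(\mathbb{C}_{\Re>\nu_0};L(H))$ with norm bounded by $1/c$ uniformly in $\epsilon$; left-multiplication by the unitary $\mathcal{T}_\epsilon$ (which is itself independent of $z$) preserves both the norm bound and the analyticity in $z$. For the second ingredient, I would verify that the homogenized block in Remark \ref{rem:rep} fits the framework of Theorem \ref{thm:st}: the compression $\diag(\widetilde{\mathcal{M}}_0(z),\widetilde{\mathcal{M}}_1(z))$ of a coefficient family with $\Re\geq c$ to the closed subspace $\kar(C_s)\oplus \kar(C_s^*)$ still satisfies $\Re\geq c$, while the block operator $\bigl(\begin{smallmatrix} 0 & \widetilde{C_d}^* \\ -\widetilde{C_d} & 0\end{smallmatrix}\bigr)$ is skew-self-adjoint by construction (the adjoint relation $(\iota_{s^*}^*C_d\iota_s)^*\supseteq \iota_s^*C_d^*\iota_{s^*}$ on the natural dense cores extends to equality after closure). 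Proposition \ref{prop:hinfty} again yields the required Hardy-space property; sandwiching with the fixed embeddings $\iota_s,\iota_{s^*}$ preserves both boundedness and analyticity.

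For the third, and central, ingredient, fix $z\in \mathbb{C}_{\Re>\nu_0}$ and $(f,g)\in \mathcal{H}$, and set $(u_\epsilon,q_\epsilon):=\mathcal{S}_\epsilon(z)(f,g)$, so that \eqref{eq:tdp} holds with coefficient operators $\mathcal{M}_0(z)$ and $\mathcal{M}_1(z)$. Theorem \ref{thm:th} then gives $\mathcal{T}_\epsilon(u_\epsilon,q_\epsilon)\rightharpoonup (u,q)$ weakly in $H$, where $(u,q)\in(\dom(C_d)\cap \kar(C_s))\oplus (\dom(C_d^*)\cap \kar(C_s^*))$ solves \eqref{eq:th1}--\eqref{eq:th2}. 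By Corollary \ref{cor:sys} combined with the block-matrix rewriting of Remark \ref{rem:rep}, this $(u,q)$ coincides with $\mathcal{S}_0(z)(f,g)$. Unique solvability of the homogenized system (noted in the text preceding Theorem \ref{thm:mr}) promotes subsequential to full convergence.

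The principal obstacle is essentially bookkeeping rather than new analysis: one must carefully align Theorem \ref{thm:th}'s pointwise statement with the formal structure required by Theorem \ref{thm:conv}, checking that passing to the compressed system via $\iota_s,\iota_{s^*}$ preserves the Hardy-space framework, that the analytic dependence on $z$ persists through the inversion in the definition of $\mathcal{S}_0$, and that the unitary $\mathcal{T}_\epsilon$ (being $z$-independent) commutes with the functional calculus in the sense needed to read $\mathcal{T}_\epsilon\mathcal{S}_\epsilon(\partial_{t,\nu})=(\mathcal{T}_\epsilon\mathcal{S}_\epsilon)(\partial_{t,\nu})$.
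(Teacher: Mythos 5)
Your proposal is correct and follows essentially the same route as the paper's (very terse) proof: apply Theorem \ref{thm:th} pointwise in $z$, upgrade to unconditional convergence via Corollary \ref{cor:sys}, Remark \ref{rem:rep}, and the subsequence principle, then invoke Theorem \ref{thm:conv}. Your write-up is more explicit about the Hardy-space prerequisites (boundedness and analyticity of $(\mathcal{T}_\epsilon\mathcal{S}_\epsilon)_\epsilon$ and $\mathcal{S}_0$, skew-self-adjointness of the homogenized block, commutation of the $z$-independent unitary $\mathcal{T}_\epsilon$ with the functional calculus), but these were already in place in the paper via Proposition \ref{prop:hinfty} and the discussion surrounding Remark \ref{rem:rep}.
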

\begin{proof} For $z\in\mathbb{C}_{\Re>\nu_0}$ we obtain with Theorem~\ref{thm:th} (in the version without choosing subsequences, which is justified by the subsequence principle in combination with Corollary~\ref{cor:sys} and the representation in Remark~\ref{rem:rep}) that $\mathcal{T}_\epsilon\mathcal{S}_\epsilon(z)\to \mathcal{S}_0(z)$ in the weak operator topology of $L(\mathcal{H},H)$. Thus, the assertion follows from Theorem~\ref{thm:conv}.
\end{proof}

\section{Examples}\label{sec:exam}
In this section we present two specific examples of the unfolding operator---the stochastic and (a variant of the) periodic unfolding operator. Moreover, we provide specific examples in which Theorem \ref{thm:ellhom} and Theorem \ref{thm:mr} yield homogenization results. In particular, we consider homogenization problems for elliptic, Maxwell's and wave equations. Also, besides the essential homogenization results obtained by Theorem \ref{thm:mr} for the evolutionary equations, with little additional effort we prove some corrector type results.

\subsection{Examples of unfolding operators}
\textbf{Deterministic differential operators.}
First, we introduce the deterministic differential operators which we use in the description of the considered problems. In the following, we denote by $\partial_i$, $\nabla=(\partial_1,\ldots,\partial_n)$, and $\dive\cdot$ the partial derivative, the gradient and the divergence, respectively, and use the notation
$$\nabla\times u:=
\brac{\partial_{2} u_3 - \partial_{3} u_2,\ \partial_{3} u_1 - \partial_{1} u_3,\ \partial_{1} u_2 - \partial_{2} u_1 }$$
for the curl of a smooth three-dimensional vector-field. Let $Q\subseteq \R^n$ be open. 

\textit{Gradient and divergence operators}. The operator closure of $C^{\infty}_c(Q)\subseteq L^2(Q)\to L^2(Q)^n$, $\varphi\mapsto\nabla\varphi$ and its adjoint are denoted by
\begin{align*}
  \nablao: \dom(\nablao)=H^1_0(Q)\subseteq L^2(Q)\to L^2(Q)^n\qquad\text{and}\qquad 
  -\dive_x = \brac{\nablao}^{*}.
\end{align*}

\textit{Curl operator}. Let $n=3$. The operator closure of $C_c^{\infty}(Q)^3\subseteq L^2(Q)^3\to L^2(Q)^3$, $u\mapsto\nabla\times u$ and its adjoint are denoted by
\begin{align*}
  \curlo: \dom(\curlo)\subseteq L^2(Q)^3\to L^2(Q)^3\qquad\text{and}\qquad \curl_x:=\brac{\curlo}^*.
\end{align*}

\subsubsection{Periodic unfolding (in the mean)} We present a periodic unfolding operator suited for homogenization problems involving periodic coefficients. We consider the unit torus $\Box\coloneqq \R^n / \Z^n$ equipped with the push-forward of the Lebesgue measure $\mathcal{L}([0,1)^n)$ and let $Q \subseteq \R^n$ be open. We consider the following particular choice of the abstract Hilbert spaces from Section \ref{s:gr}: $H_d = L^2(Q), \; H_s = L^2(\Box)$. For $\varepsilon \in \R \setminus{\cb{0}}$, we let $\mathcal{T}_{\varepsilon}:L^2(Q) \overset{a}{\otimes} L^2(\Box)\to L^2(Q)\otimes L^2(\Box)$ be given by
\begin{equation}\label{equation:869}
\mathcal{T}_{\varepsilon}u(x,y)= u\brac{x, y - \cb{\frac{x}{\varepsilon}}_{\Box}},
\end{equation}
where $\cb{\cdot}_{\Box}: \R^{n}\to \Box$ is the canonical quotient map. $\unf$ is a linear isometry and we call its continuous extension $\unf: L^2(Q)\otimes L^2(\Box)\to L^2(Q)\otimes L^2(\Box)$ \textit{periodic unfolding operator}. Thus, $\unf$ is unitary and we have $\mathcal{T}_{\varepsilon}^{-1}=\mathcal{T}_{-\varepsilon}$.
\begin{remark}
The above notion of periodic unfolding differs from the classical notion of unfolding introduced in \cite{cioranescu2002periodic,cioranescu2008periodic}. In particular, in \cite{cioranescu2002periodic,cioranescu2008periodic} the unfolding operator is defined as
\begin{equation*}
\widetilde{\unf}: L^2(Q)\to L^2(Q)\otimes L^2(\Box), \text{ given by }\widetilde{\unf} u(x,y)=u\brac{\left[\frac{x}{\varepsilon}\right]_{\varepsilon \Z^n}+\varepsilon y},
\end{equation*}
where $\left[{\cdot}\right]_{\varepsilon \Z^n}: \R^n\to \varepsilon \Z^n$ is defined as $\left[ x \right]_{\varepsilon \Z^n}=x-\varepsilon \cb{\frac{x}{\varepsilon}}_{\Box}$. The operator $\widetilde{\unf}$ defined in this fashion is not surjective and therefore it does not satisfy the assumptions of our abstract setting. On the other hand, definition (\ref{equation:869}) involves an additional variable $y$ for functions in the domain of $\unf$ (in this respect we might call the notion defined in (\ref{equation:869}) \textit{periodic unfolding in the mean}) and consequently it fulfills the requirements from Section \ref{s:gr}.
\end{remark}
The operator closure of $C^{\infty}(\Box)\subseteq L^2(\Box)\to L^2(\Box)^n$, $\varphi\mapsto\nabla\varphi$ and its adjoint are denoted by 
\begin{align*}
 \nablap: \dom(\nablap)=H^1_{\#}(\Box)\subseteq L^2(\Box)\to L^2(\Box)^n\qquad\text{and}\qquad- \divp = (\nablap)^*.
\end{align*}
Note that we may identify $C^\infty(\Box)$ with the space of $\Z^n$-periodic functions in $C^\infty(\R^n)$, and $L^2(\Box)$ with $L^2((0,1)^n)$.

If we set $C_d = \nablao$ and $C_s = \mathrm{grad}_y^{\#}$, it follows that the abstract conditions (\ref{eq:com1})-(\ref{eq:inv}) are satisfied (Note that \eqref{eq:core} is easy, by choosing $D_1=D_2=C_c^\infty(Q)$ and $E_1=E_2= C^{\infty}(\Box)$):
\begin{lemma}\label{lemma:890}
For $\varepsilon \neq 0$, we have
\begin{align*}
\varepsilon \nablao \mathcal{T}_{-\varepsilon}\varphi = \varepsilon \mathcal{T}_{-\varepsilon} \nablao \varphi + \mathrm{grad}_y^{\#} \mathcal{T}_{-\varepsilon}\varphi, & \quad \text{for all }\varphi\in \dom(\nablao)\cap \dom(\mathrm{grad}_y^{\#}), \\
\unf \mathrm{grad}_y^{\#} \subseteq \mathrm{grad}_y^{\#} \mathcal{T}_{\varepsilon}, & \\
\unf \varphi = \varphi, & \quad \text{for all }\varphi \in \kar(\mathrm{grad}_y^{\#}).
\end{align*}
\end{lemma}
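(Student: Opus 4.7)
All three identities should be established first on the algebraic tensor product core $C_c^\infty(Q)\otimes C^\infty(\Box)\subseteq \dom(\nablao)\cap \dom(\mathrm{grad}_y^\#)$, and then extended to the full domains via closedness of the differential operators together with unitarity of $\mathcal{T}_\varepsilon$. The third identity is essentially trivial; the first two reduce to the one-line chain rule for $y\mapsto y+x/\varepsilon$.

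First I would do identity~(3), since $\kar(\mathrm{grad}_y^\#)$ consists exactly of the $y$-independent functions of $L^2(Q)\otimes L^2(\Box)$, i.e., functions of the form $(x,y)\mapsto \phi(x)$ with $\phi\in L^2(Q)$; the definition \eqref{equation:869} then makes $\mathcal{T}_\varepsilon$ act trivially. Next, for a simple tensor $\varphi=\phi\otimes\psi$ with $\phi\in C_c^\infty(Q)$ and $\psi\in C^\infty(\Box)$, interpret $\psi$ as a $\Z^n$-periodic smooth function on $\R^n$. Then $\mathcal{T}_{-\varepsilon}\varphi(x,y)=\phi(x)\psi(y+x/\varepsilon)$ lies in $\dom(\nablao)$ (as a smooth compactly supported $L^2(\Box)$-valued function in $x$), and a direct differentiation in $x$ gives
\[
  \varepsilon\nablao\mathcal{T}_{-\varepsilon}\varphi(x,y)=\varepsilon(\nabla\phi)(x)\psi(y+\tfrac{x}{\varepsilon})+\phi(x)(\nabla\psi)(y+\tfrac{x}{\varepsilon}),
\]
which is precisely $\varepsilon\mathcal{T}_{-\varepsilon}\nablao\varphi+\mathrm{grad}_y^\#\mathcal{T}_{-\varepsilon}\varphi$. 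A similar one-line computation shows that $\mathcal{T}_\varepsilon$ commutes with $\mathrm{grad}_y^\#$ on the same simple tensors, giving~(2) on the core.

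To extend~(1) and~(2) to the full domains, I would use that $C_c^\infty(Q)\otimes C^\infty(\Box)$ is a core for $\nablao$, for $\mathrm{grad}_y^\#$, and for their intersection (by standard tensor-product theory of closed operators on Hilbert spaces, cf.\ the reference used in Lemma~\ref{lem:proj}). Given $\varphi\in\dom(\nablao)\cap \dom(\mathrm{grad}_y^\#)$, choose a sequence $(\varphi_k)$ in the core with $\varphi_k\to\varphi$, $\nablao\varphi_k\to\nablao\varphi$, $\mathrm{grad}_y^\#\varphi_k\to\mathrm{grad}_y^\#\varphi$. By unitarity of $\mathcal{T}_{\pm\varepsilon}$ we have $\mathcal{T}_{-\varepsilon}\varphi_k\to \mathcal{T}_{-\varepsilon}\varphi$, and the core identity shows $(\nablao\mathcal{T}_{-\varepsilon}\varphi_k)_k$ converges; closedness of $\nablao$ then forces $\mathcal{T}_{-\varepsilon}\varphi\in\dom(\nablao)$ with the expected limit, which is exactly~(1). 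The same closedness argument, applied with $\mathrm{grad}_y^\#$ in place of $\nablao$, yields~(2).

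The only non-routine point is the core statement: that the algebraic tensor product of the respective operator cores for $\nablao$ and $\mathrm{grad}_y^\#$ is dense (in the graph-norm sense) in $\dom(\nablao)\cap\dom(\mathrm{grad}_y^\#)$ equipped with its natural Hilbert graph norm. This is the main technical obstacle, but it is a standard fact for tensor products of closed, densely defined operators with cores, and the paper already invokes this framework elsewhere (e.g., in the proof of Lemma~\ref{lem:proj}); once granted, the extension step is immediate from the unitarity of $\mathcal{T}_\varepsilon$ and closedness of the differential operators.
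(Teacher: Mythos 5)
Your proof is correct and is essentially the same argument as the paper's, with one mild methodological difference on the first identity. You verify the chain rule directly on $\mathcal{T}_{-\varepsilon}\varphi$ for simple tensors $\varphi=\phi\otimes\psi$ and then pass to general $\varphi$ via a core-plus-closedness limit; the paper instead starts from a general $\varphi\in\dom(\nablao)\cap\dom(\mathrm{grad}_y^\#)$, tests $\mathcal{T}_{-\varepsilon}\nablao\varphi$ against a smooth $\eta\in C_c^\infty(Q)\otimes C^\infty(\Box)^n$, applies the chain rule to $\dive_x\mathcal{T}_\varepsilon\eta$, and invokes the same density statement at the end. These are dual formulations of the same computation. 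Your version is, if anything, a little more explicit about why $\mathcal{T}_{-\varepsilon}\varphi$ lands in $\dom(\nablao)$ and in $\dom(\mathrm{grad}_y^\#)$ separately: you first upgrade the commutation with $\mathrm{grad}_y^\#$ (your identity~(2)) via closedness, which then shows the right-hand side of the core identity converges, and closedness of $\nablao$ then gives the domain membership. The one point you (and the paper) both take for granted is the assertion that $C_c^\infty(Q)\overset{a}{\otimes}C^\infty(\Box)$ is graph-norm dense in $\dom(\nablao)\cap\dom(\mathrm{grad}_y^\#)$; the paper likewise simply asserts it, so you are on equal footing there. Identities~(2) and~(3) are handled the same way in both proofs.
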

Note that  $\dom(\nablao)\cap \dom(\mathrm{grad}_y^{\#})=H^1_0(Q)\otimes L^2(\Box)\cap L^2(Q)\otimes H^1_{\#}(\Box)$ and $\kar(\mathrm{grad}_y^{\#})\simeq L^2(Q)\otimes \mathbb{C}$.
\begin{proof}[Proof of Lemma \ref{lemma:890}]
Let $\eta \in C_c^{\infty}(Q)\overset{a}{\otimes} C^{\infty}(\Box)^n$. Then
\begin{equation*}
\expect{ \varepsilon \mathcal{T}_{-\varepsilon} \nablao \varphi, \eta }_{L^2(Q)\otimes L^2(\Box)^n} = -\expect{\varphi, \varepsilon \dive_x \unf \eta}_{L^{2}(Q)\otimes L^2(\Box)}.
\end{equation*}
Using the smoothness of $\eta$ and the chain rule, we compute $\varepsilon \dive_{x} \unf \eta =\varepsilon \unf  \dive_{x} \eta - \unf \dive_{y}^{\#} \eta$, where we use that $\dive_x \unf \eta (x,y)=\partial_{x_i} \brac{\eta_i(x,y-\cb{\frac{x}{\varepsilon}}_{\Box})}$ a.e. Therefore, we obtain \begin{equation*}
\expect{ \varepsilon \mathcal{T}_{-\varepsilon} \nablao \varphi, \eta }_{L^2(Q)\otimes L^2(\Box)^n}=\expect{ \varepsilon \nablao \invunf \varphi - \mathrm{grad}_y^{\#} \invunf \varphi, \eta }_{L^2(Q)\otimes L^2(\Box)^n}.
\end{equation*}
Since $C_c^{\infty}(Q)\overset{a}{\otimes} C^{\infty}(\Box)$ is dense in $\dom(\nablao)\cap \dom(\mathrm{grad}_y^{\#})$, the first claim follows. 

The second claim follows from the fact that $\unf \mathrm{grad}_y^{\#} \varphi = \mathrm{grad}_y^{\#} \unf \varphi$ for any $\varphi\in C^{\infty}(\Box)$.

The last claim follows using that $\kar(\mathrm{grad}_y^{\#})\simeq L^2(Q)\otimes \C$, which can be obtained with the help of the Poincar{\'e} inequality: There exists $C>0$ such that
\begin{equation*}
\left\|\varphi - \int_{\Box}\varphi  \right\|_{L^2(\Box)} \leq C \|\mathrm{grad}_y^{\#} \varphi\|_{L^2(\Box)^n} \quad \text{for all }\varphi \in \dom(\mathrm{grad}_y^{\#}). \qedhere
\end{equation*}
\end{proof}

\subsubsection{Stochastic unfolding}\label{sec:ex2} Let $Q\subseteq \R^n$ be open, $(\Omega,\Sigma, \mu, \tau)$ be a probability space satisfying Assumption \ref{assumpt:267}. In this section, we set $H_d= L^2(Q)$ and $H_s= L^2(\Omega)$ and consider the \textit{stochastic unfolding operator} $\unf: L^2(Q){\otimes}L^2(\Omega)\to L^2(Q)\otimes L^2(\Omega)$ defined in (\ref{eq:298}).
\begin{remark}
The stochastic unfolding operator is a generalization of the periodic unfolding operator since in the case $(\Omega,\Sigma,\mu)  = (\Box,\mathcal{L},dy)$ and $\tau_x y = y + \cb{x}_{\Box}$ the two operators coincide. Another instance of stochastic unfolding corresponds to the choice $(\Omega,\Sigma,\mu)=\brac{\Box^m, \mathcal{L}^m,(dy)^m}$ (for some $m \in \N$) with $\tau_x y=(y_1+\cb{x}_{\Box},\ldots,y_m+\cb{x}_{\Box})$. The latter is well-suited for the treatment of problems involving quasi-periodic rapidly-oscillating coefficients.
\end{remark}

With help of the dynamical system $\tau$ (for fixed $i\in\{1,\ldots,n\}$) we introduce the group of unitary operators ($h\in \R$) $T_{h e_i}: L^2(\Omega) \to L^2(\Omega)$ given by
\begin{equation}\label{eq:988}
T_{h e_i} \varphi = \varphi \circ \tau_{h e_i}.
\end{equation}
This group is strongly continuous and we denote its infinitesimal generator by $\partial_{\omega,i}: \dom(\partial_{\omega,i})\subseteq L^2(\Omega)\to L^2(\Omega)$, i.e., 
\begin{equation*}
\partial_{\omega,i} \varphi = \lim_{h \to 0} \frac{T_{h e_i}\varphi-\varphi}{h}.
\end{equation*}
Analogously to \eqref{eq:988}, we define $T_x:L^2(\Omega)\to L^2(\Omega)$ for $x\in \R^n$. Using the \textit{stochastic partial derivatives} $\partial_{\omega,i}$, we define the \textit{stochastic gradient} \[\mathrm{grad}_{\omega}:H^1(\Omega)\coloneqq \bigcap_{i\in\{1,\ldots,n\}}\dom(\partial_{\omega,i})\subseteq L^2(\Omega) \to L^2(\Omega)^n\] given by
\begin{equation*}
\mathrm{grad}_{\omega} \varphi = \brac{\partial_{\omega,1}\varphi,\ldots, \partial_{\omega,n}\varphi}.
\end{equation*}
It is a closed and densely defined linear operator and we let $\dive_{\omega}= - \brac{\mathrm{grad}_{\omega}}^*$. Also, we introduce the space of \textit{smooth} random variables by
\begin{equation*} 
H^{\infty}(\Omega)=\cb{\varphi \in L^2(\Omega): \; \partial_{\omega,1}^{\alpha_1}\cdots\partial_{\omega,n}^{\alpha_n}\varphi \in H^1(\Omega)\; \text{for all }\alpha_1,\ldots,\alpha_n \in \N_0}.
\end{equation*}

In the case $n=3$, we define the stochastic counterpart of the $\curlo$ operator as follows. Let $\nabla_{\omega}\times:  H^{\infty}(\Omega)^3 \subseteq L^2(\Omega)^3 \to L^2(\Omega)^3$ be given by
\begin{equation*}
\nabla_{\omega}\times \varphi = \brac{\partial_{\omega,2} \varphi_3 - \partial_{\omega,3} \varphi_2,\partial_{\omega,3} \varphi_1 - \partial_{\omega,1} \varphi_3,\partial_{\omega,1} \varphi_2 - \partial_{\omega,2} \varphi_1 }.
\end{equation*}
We let $\curl_{\omega}= \brac{\nabla_{\omega}\times}^{**}$. 
The choice $C_d= \nablao$ and $C_s= \mathrm{grad}_{\omega}$ satisfies assumptions (\ref{eq:com1})-(\ref{eq:inv}), and if we set $C_d= \curlo$ and $C_s=\curl_{\omega}$ we merely obtain (\ref{eq:com1})-(\ref{eq:com2}). (Again, note that \eqref{eq:core} in both cases follows with the choice $D_1=D_2=C_c^\infty(Q)$ and $E_1=E_2=H^\infty(\Omega)$.)
\begin{lemma}\label{lemma:964}
Let $\varepsilon\neq 0$. 
\begin{enumerate}
\item Then
\begin{align*}
\varepsilon \nablao \mathcal{T}_{-\varepsilon}\varphi = \varepsilon \mathcal{T}_{-\varepsilon} \nablao \varphi + \mathrm{grad}_{\omega} \mathcal{T}_{-\varepsilon}\varphi & \quad \text{for all }\varphi \in \dom(\nablao)\cap \dom{(\mathrm{grad}_{\omega})},\\
\unf \mathrm{grad}_{\omega}\subseteq \mathrm{grad}_{\omega} \unf &,\\
\unf \varphi = \varphi & \quad \text{for all }\varphi \in \kar(\mathrm{grad}_{\omega}).
\end{align*}
\item If $n=3$, then
\begin{align*}
\varepsilon \curlo \mathcal{T}_{-\varepsilon} \varphi = \varepsilon \mathcal{T}_{-\varepsilon} \curlo \varphi + \curl_{\omega} \mathcal{T}_{-\varepsilon} \varphi & \quad \text{for all } \varphi \in \dom(\curlo)\cap \dom(\curl_{\omega}),\\ \unf \curl_{\omega} \subseteq \curl_{\omega} \unf &.
\end{align*}
\end{enumerate}
\end{lemma}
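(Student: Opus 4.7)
The plan is to mimic the proof of Lemma \ref{lemma:890}, replacing the periodic partial derivative $\partial_{y_i}^{\#}$ by its stochastic counterpart $\partial_{\omega,i}$. The computational ingredient that makes everything work is a stochastic chain rule: for $\psi\in H^\infty(\Omega)$, the map $x\mapsto T_x\psi$ is differentiable $\mathbb R^n\to L^2(\Omega)$ with derivative $\partial_{\omega,i}T_x\psi$ in the $i$-th direction (this follows from $\partial_{\omega,i}$ being the infinitesimal generator of $T_{he_i}$ and from $\psi\in H^\infty(\Omega)$ lying in the domain of arbitrary products of these generators). Hence, for $\eta\in C_c^\infty(Q)$ and $\psi\in H^\infty(\Omega)$,
\begin{equation*}
  \partial_{x_i}\bigl[\eta(x)\psi(\tau_{-x/\varepsilon}\omega)\bigr]
   = (\partial_{x_i}\eta)(x)\psi(\tau_{-x/\varepsilon}\omega) - \tfrac{1}{\varepsilon}\,\eta(x)\,(\partial_{\omega,i}\psi)(\tau_{-x/\varepsilon}\omega).
\end{equation*}
Summing over $i$ and rewriting in operator form on the algebraic tensor product yields, for every $\eta\in C_c^\infty(Q)\overset{a}{\otimes} H^\infty(\Omega)^n$,
\begin{equation*}
  \varepsilon\,\dive_x(\unf\eta)=\varepsilon\,\unf\dive_x\eta-\unf\dive_\omega\eta.
\end{equation*}

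To prove the first identity of (a), I would take $\varphi\in\dom(\nablao)\cap\dom(\mathrm{grad}_\omega)$ and pair $\varepsilon\invunf\nablao\varphi$ against such an $\eta$. Using $(\nablao)^*=-\dive_x$ and unitarity of $\unf$, the pairing equals $-\langle\varphi,\varepsilon\,\dive_x\unf\eta\rangle$; substituting the chain-rule identity above and using $(\mathrm{grad}_\omega)^*=-\dive_\omega$ gives
\begin{equation*}
  \langle\varepsilon\invunf\nablao\varphi,\eta\rangle=\langle\varepsilon\nablao\invunf\varphi-\mathrm{grad}_\omega\invunf\varphi,\eta\rangle.
\end{equation*}
Since $C_c^\infty(Q)\overset{a}{\otimes} H^\infty(\Omega)^n$ is dense in $L^2(Q)\otimes L^2(\Omega)^n$, the identity follows (replacing $\varepsilon$ by $-\varepsilon$ to obtain the form stated in the lemma). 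Note that this simultaneously shows $\invunf\varphi\in\dom(\nablao)$.

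For the second identity I would exploit the commutation of $\unf$ with the stochastic translation group. Direct inspection on simple tensors gives $\unf(1\otimes T_{he_i})=(1\otimes T_{he_i})\unf$, since both sides send $\eta\otimes\psi$ to $(x,\omega)\mapsto\eta(x)\psi(\tau_{he_i}\tau_{-x/\varepsilon}\omega)$; by density this extends to all of $L^2(Q)\otimes L^2(\Omega)$. Passing to the generator via $\partial_{\omega,i}\varphi=\lim_{h\to 0}h^{-1}(T_{he_i}-1)\varphi$ shows that $\unf$ preserves $\dom(\partial_{\omega,i})$ and commutes with it, hence $\unf\,\mathrm{grad}_\omega\subseteq\mathrm{grad}_\omega\unf$. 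The third identity is immediate: $\varphi\in\kar(\mathrm{grad}_\omega)$ iff $T_x\varphi=\varphi$ for every $x\in\mathbb R^n$ (the group acts trivially when its generator vanishes), in which case the defining action of $\unf$ (involving $\tau_{-x/\varepsilon}$) leaves $\varphi$ fixed.

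Part (b) follows by the same template. Components of $\nabla_\omega\times$ are linear combinations of $\partial_{\omega,i}$, so the chain-rule computation on $\eta\in C_c^\infty(Q)\overset{a}{\otimes} H^\infty(\Omega)^3$ gives $\varepsilon\,\curl_x(\unf\eta)=\varepsilon\,\unf\curl_x\eta-\unf\curl_\omega\eta$; testing against $\varphi\in\dom(\curlo)\cap\dom(\curl_\omega)$ and using $(\curlo)^*=\curl_x$ together with the density of smooth tensors yields the first equality of (b). The commutation $\unf\curl_\omega\subseteq\curl_\omega\unf$ is inherited from the commutation $\unf\partial_{\omega,i}\subseteq\partial_{\omega,i}\unf$ established above. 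The main technical point to verify carefully is that $C_c^\infty(Q)\overset{a}{\otimes} H^\infty(\Omega)^k$ is dense in the relevant intersection of operator domains; this is the place where the $H^\infty(\Omega)$-smoothness of the test functions (needed to apply the stochastic chain rule) must be reconciled with the density requirements for the final extension step.
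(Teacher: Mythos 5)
Your proof is correct and follows essentially the same route as the paper's: test against $\eta\in C_c^\infty(Q)\overset{a}{\otimes}H^\infty(\Omega)^n$, use the chain-rule identity $\varepsilon\,\dive_x(\unf\eta)=\varepsilon\,\unf\dive_x\eta-\unf\dive_\omega\eta$, then conclude by density, and derive the commutation with $\mathrm{grad}_\omega$ from the commutation of $\unf$ with the translation group $T_{he_i}$. You even make explicit a step the paper leaves implicit (the commutation $\unf T_{he_i}=T_{he_i}\unf$ on simple tensors) and correctly carry the minus sign in the adjoint pairing $\langle\invunf\nablao\varphi,\eta\rangle=-\langle\varphi,\dive_x\unf\eta\rangle$, which the paper drops in a typo.
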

\begin{proof}
\textit{(a)} Let $\eta \in C^{\infty}_{c}(Q)\overset{a}{\otimes} H^{\infty}(\Omega)^n$. We have
\begin{equation*}
\expect{\varepsilon \invunf \nablao \varphi,\eta}_{L^2(Q)\otimes L^2(\Omega)}=\expect{\varphi,\varepsilon \dive_x \unf \eta}_{L^2(Q)\otimes L^2(\Omega)}.
\end{equation*}
We compute $\varepsilon \dive_x \unf \eta = \varepsilon \unf \dive_{x} \eta - \unf \dive_{\omega} \eta$, in order to obtain,
\begin{equation*}
\expect{\varepsilon \invunf \nablao \varphi,\eta}_{L^2(Q)\otimes L^2(\Omega)}=\expect{\varepsilon \nablao \invunf \varphi-\mathrm{grad}_{\omega}\invunf \varphi,\eta}_{L^2(Q)\otimes L^2(\Omega)}.
\end{equation*}
Note that $C^{\infty}_{c}(Q)\overset{a}{\otimes} H^{\infty}(\Omega)$ is dense in $\dom(\nablao)\cap \dom{(\mathrm{grad}_{\omega})}$ (see (\ref{eq:992}) below) and therefore the first claim follows.

In order to obtain the second claim, it is sufficient to show that $\unf \partial_{\omega,i} \varphi = \partial_{\omega,i} \unf \varphi$ for any $\varphi \in H^{\infty}(\Omega)$ (for any $i\in\{1,\ldots,n\}$). Let $\varphi, \eta \in L^2(Q)\stackrel{a}\otimes H^{\infty}(\Omega)$. We have
\begin{align*}
\expect{\unf \partial_{\omega,i} \varphi, \eta}_{L^2(Q) \otimes L^2(\Omega)} &= \lim_{h\to 0}\frac{1}{h} \expect{\unf T_{h e_i}\varphi - \unf \varphi, \eta}_{L^2(Q)\otimes L^2(\Omega)} \\ & =  \lim_{h\to 0}\frac{1}{h}\expect{\unf \varphi,T_{-h e_i}\eta - \eta}_{L^2(Q)\otimes L^2(\Omega)} \\ & = -\expect{\unf \varphi,\partial_{\omega,i}\eta}_{L^2(Q)\otimes L^2(\Omega)}\\ & = \expect{\partial_{\omega,i} \unf \varphi, \eta}_{L^2(Q)\otimes L^2(\Omega)}.
\end{align*} 
As a result of this and by the density of $H^{\infty}(\Omega)$ in $H^1(\Omega)$, the claim follows.

For $\varphi \in \kar(\mathrm{grad}_{\omega})$, we have $T_{x}\varphi =\varphi$ for all $x\in \R^n$ (see, e.g., \cite[Lemma 3.10]{heida2019stochastic}). As a result of this, the third claim follows.

\textit{(b)} The proof follows analogously to part \textit{(a)}.
\end{proof}

\begin{remark}[Boundary conditions]
The first commutation relation from Lemmas \ref{lemma:890} and \ref{lemma:964} (a) remains valid if the gradient operator $\nablao$ (which is defined on a domain which accounts for homogeneous Dirichlet boundary conditions) is replaced by gradient operators corresponding to other types of boundary conditions (e.g., with homogeneous Neumann condition or periodic boundary condition). In this respect, the unfolding procedure does not only apply to problems with certain boundary conditions.
\end{remark}

\textbf{Auxiliary results.} We provide certain facts that will be helpful in the treatment of the stochastic homogenization problems considered in the following section (in particular for corrector type results). The following standard orthogonal decompositions hold (see, e.g., \cite{brezis2010functional})
\begin{equation}\label{equation:957}
L^2(\Omega)=\kar(\mathrm{grad}_{\omega}) \oplus^{\bot} \overline{\rge}(\dive_{\omega}), \quad
L^2(\Omega)^n = \kar(\dive_{\omega})\oplus^{\bot} \overline{\rge}(\mathrm{grad}_{\omega}).
\end{equation}
$P_{\mathsf{inv}}$ and $P_{\mathsf{pot}}$ denote the orthogonal projections $P_{\mathsf{inv}}: L^2(\Omega) \to \kar(\mathrm{grad}_{\omega})$ and $P_{\mathsf{pot}}:L^2(\Omega)^n\to \overline{\rge}(\mathrm{grad}_{\omega})=: L^2_{\mathsf{pot}}(\Omega)$. We have $\kar(\mathrm{grad}_{\omega})= \cb{u \in L^2(\Omega): T_x u = u \quad \text{for all } x\in \R^n}=: L^2_{\mathsf{inv}}(\Omega)$ (see, e.g., \cite[Lemma 3.10]{heida2019stochastic}). If we additionally assume that the probability space is ergodic, we get $L^2_{\mathsf{inv}}(\Omega)\simeq \C$.

\begin{lemma}\label{lemma:981}
Let $n=3$. Then
\begin{equation*}
\kar( \curl_{\omega}) = L^2_{\mathsf{inv}}(\Omega)^3 \oplus^{\bot} L^2_{\mathsf{pot}}(\Omega), \quad
\kar(\curl_{\omega}^{*})= L^2_{\mathsf{inv}}(\Omega)^3 \oplus^{\bot} L^2_{\mathsf{pot}}(\Omega).
\end{equation*}
\end{lemma}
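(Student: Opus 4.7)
My plan is to prove the first identity in detail and then deduce the second from the self-adjointness of $\curl_\omega$.

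For the inclusion ``$\supseteq$'' in the first identity, I would first note that every $\varphi \in L^2_{\mathsf{inv}}(\Omega)^3$ lies in $H^\infty(\Omega)^3$ with all stochastic partial derivatives vanishing, so $\nabla_\omega \times \varphi = 0$. Next, $\nabla_\omega \times \mathrm{grad}_\omega \psi = 0$ for every $\psi \in H^\infty(\Omega)$ by commutativity of the generators $\partial_{\omega,i}$ of the $\R^n$-action, and then density of $H^\infty(\Omega)$ together with closedness of $\curl_\omega$ yields $L^2_{\mathsf{pot}}(\Omega) \subseteq \kar(\curl_\omega)$. Orthogonality of $L^2_{\mathsf{inv}}(\Omega)^3$ and $L^2_{\mathsf{pot}}(\Omega)$ follows from $L^2_{\mathsf{inv}}(\Omega)^3 \subseteq \kar(\dive_\omega)$ and the identity $L^2_{\mathsf{pot}}(\Omega) = \overline{\rge}(\mathrm{grad}_\omega) = \kar(\dive_\omega)^\bot$ from \eqref{equation:957}.

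For the reverse inclusion, let $\varphi \in \kar(\curl_\omega)$ and split $\varphi = \varphi^{\mathsf{sol}} + \varphi^{\mathsf{pot}}$ according to \eqref{equation:957}. Since $\varphi^{\mathsf{pot}} \in \kar(\curl_\omega)$ by the previous paragraph, I get $\varphi^{\mathsf{sol}} \in \kar(\curl_\omega) \cap \kar(\dive_\omega)$, and it suffices to show this intersection equals $L^2_{\mathsf{inv}}(\Omega)^3$. My strategy is to mollify along the group: for $\rho_\delta \in C_c^\infty(\R^n)$ a standard mollifier, set $\varphi^{\mathsf{sol}}_\delta := \int_{\R^n} \rho_\delta(x)\, T_x \varphi^{\mathsf{sol}}\, dx$ (componentwise). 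The usual arguments yield $\varphi^{\mathsf{sol}}_\delta \in H^\infty(\Omega)^3$ with $\varphi^{\mathsf{sol}}_\delta \to \varphi^{\mathsf{sol}}$ in $L^2$, and since the $T_x$ commute with $\partial_{\omega,i}$, the properties $\dive_\omega \varphi^{\mathsf{sol}}_\delta = 0$ and $\curl_\omega \varphi^{\mathsf{sol}}_\delta = 0$ are preserved. For smooth $\varphi^{\mathsf{sol}}_\delta$, curl-freeness gives $\partial_{\omega,i}\varphi^{\mathsf{sol}}_{\delta,k} = \partial_{\omega,k}\varphi^{\mathsf{sol}}_{\delta,i}$, and using skew-adjointness of $\partial_{\omega,k}$ together with div-freeness I compute
\begin{equation*}
\sum_{i,k}\|\partial_{\omega,i}\varphi^{\mathsf{sol}}_{\delta,k}\|^2
= \sum_{i,k}\langle \partial_{\omega,i}\varphi^{\mathsf{sol}}_{\delta,k}, \partial_{\omega,k}\varphi^{\mathsf{sol}}_{\delta,i}\rangle
= -\sum_{i}\langle \partial_{\omega,i}\dive_\omega \varphi^{\mathsf{sol}}_\delta,\, \varphi^{\mathsf{sol}}_{\delta,i}\rangle = 0.
\end{equation*}
Hence $\varphi^{\mathsf{sol}}_\delta \in L^2_{\mathsf{inv}}(\Omega)^3$, and sending $\delta \to 0$ and using closedness of $L^2_{\mathsf{inv}}(\Omega)^3$ gives $\varphi^{\mathsf{sol}} \in L^2_{\mathsf{inv}}(\Omega)^3$, completing the first identity.

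For the second identity I would show that $\curl_\omega = \curl_\omega^*$. Integration by parts and skew-adjointness of each $\partial_{\omega,i}$ prove that $\nabla_\omega\times$ is symmetric on $H^\infty(\Omega)^3$, so $\curl_\omega \subseteq \curl_\omega^*$. For the converse inclusion I use the same mollification: given $\psi \in \dom(\curl_\omega^*)$, the fields $\psi_\delta$ lie in $H^\infty(\Omega)^3$, satisfy $\psi_\delta \to \psi$, and one checks $\curl_\omega \psi_\delta \to \curl_\omega^* \psi$ (by commuting the mollifier with the adjoint), which by closedness puts $\psi$ into $\dom(\curl_\omega)$. Then the second identity follows from the first since $\kar(\curl_\omega) = \kar(\curl_\omega^*)$.

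The main obstacle I expect is justifying that the group-mollification $\int_{\R^n}\rho_\delta(x)\,T_x(\cdot)\,dx$ genuinely produces elements of $H^\infty(\Omega)^3$ and commutes, in the appropriate weak sense, with $\mathrm{grad}_\omega$, $\dive_\omega$, $\curl_\omega$, and in particular with $\curl_\omega^*$. These are well-known facts for strongly continuous unitary $\R^n$-actions, but they need to be carefully transported into the stochastic framework of Section~\ref{sec:ex2}.
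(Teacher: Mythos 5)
Your proposal is correct and follows essentially the same strategy as the paper's proof: split the element of $\kar(\curl_\omega)$ according to the Helmholtz-type decomposition \eqref{equation:957}, mollify along the $\R^n$-action to land in $H^\infty(\Omega)^3$, use the Bochner/integration-by-parts identity (which you spell out explicitly; the paper writes it as $\int_\Omega|\curl_\omega\cdot|^2 + |\dive_\omega\cdot|^2 = \int_\Omega|\mathrm{grad}_\omega\cdot|^2$) to conclude the solenoidal part is shift-invariant, pass to the limit by closedness, and finally obtain the second identity from $\curl_\omega = \curl_\omega^*$ via a core/mollification argument — exactly the content of the paper's Lemma~\ref{lemma:996}(d)--(e).
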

The following standard mollification procedure for random variables (see, e.g., \cite{jikov2012homogenization,Bourgeat1994}) is useful in the proof of the above lemma. For a sequence $\delta \to 0$, we consider a sequence of standard mollifiers $\rho_{\delta}\in C^{\infty}_{c}(\R^n)$ (even and non-negative) and for $\varphi\in L^2(\Omega)$, we set
\begin{equation}\label{eq:992}
\varphi_{\delta}= \int_{\R^n} \rho_{\delta}(y)T_{y}\varphi dy.
\end{equation}
We obtain that $\varphi_{\delta}\in H^{\infty}(\Omega)$ (apply the definition of $\partial_{\omega,i}$, transform the integral to have the difference quotient on $\rho_\delta$, and then apply the dominated convergence theorem), $\expect{\varphi_{\delta}}=\expect{\varphi}=\int_\Omega \phi(\omega)d\mu(\omega)$ and $\varphi_{\delta}\to \varphi$ in $L^2(\Omega)$ as $\delta\to 0$ (see, e.g., \cite[Section 7.2]{jikov2012homogenization}). We collect some further useful properties of this mollification procedure: 
\begin{lemma}\label{lemma:996} Let $\phi\in L^2(\Omega)$. Then $\phi_\delta\in H^\infty(\Omega)$; and $\phi_\delta \to\phi$ in $L^2(\Omega)$ as $\delta\to 0$. 

Moreover, the following statements hold:
\begin{enumerate}
\item If $\varphi \in H^1(\Omega)$, then $\mathrm{grad}_{\omega}\varphi_{\delta}=\brac{\mathrm{grad}_{\omega}\varphi}_{\delta}$ and $\mathrm{grad}_{\omega}\varphi_{\delta}\to \mathrm{grad}_{\omega}\varphi$. 
\item If $\varphi \in \dom(\dive_{\omega})$, then $\dive_{\omega}\varphi_{\delta}=\brac{\dive_{\omega}\varphi}_{\delta}$ and $\dive_{\omega} \varphi_{\delta}\to \dive_{\omega}\varphi$. 
\item ($n=3$) If $\varphi \in \dom(\curl_{\omega})$, then $\curl_{\omega}\varphi_{\delta}=\brac{\curl_{\omega}\varphi}_{\delta}$ and $\curl_{\omega}\varphi_{\delta}\to \curl_{\omega}\varphi$.
\item ($n=3$) If $\varphi \in \dom(\curl_{\omega}^{*})$, then $\curl_{\omega}^{*}\varphi_{\delta}=\brac{\curl_{\omega}^{*}\varphi}_{\delta}$ and $\curl_{\omega}^{*}\varphi_{\delta}\to \curl_{\omega}^{*}\varphi$.
\item $\curl_\omega=\curl_\omega^*$.
\end{enumerate}
\begin{proof} The first statements of the lemma follow the same way as they follow in the deterministic case.

\textit{(a)}
We have (for $i\in \{1,\ldots,n\}$), 
\begin{align*}
\left\|\frac{1}{h}\brac{T_{h e_i}\varphi_{\delta} -\varphi_{\delta}}-\brac{\partial_{\omega,i}\varphi}_{\delta}\right\|_{L^2(\Omega)}=\left\| \int_{\R^n} \rho_{\delta}(y)\brac{ \frac{T_{y+h e_i}\varphi-T_y \varphi}{h} -T_y\partial_{\omega,i} \varphi} \right\|_{L^{2}(\Omega)}.
\end{align*}
Using the above and that $\rho_{\delta}$ is bounded and compactly supported, we obtain that there exists $C(\delta)\geq 0$ such that
\begin{equation*}
\left\|\frac{1}{h}\brac{T_{h e_i}\varphi_{\delta} -\varphi_{\delta}}-\brac{\partial_{\omega,i}\varphi}_{\delta}\right\|_{L^2(\Omega)}\leq C(\delta)\left\|\frac{1}{h}\brac{T_{h e_i}\varphi - \varphi}-\partial_{\omega,i}\varphi \right\|_{L^2(\Omega)}.
\end{equation*}
In the limit $h\to 0$ the right-hand side vanishes, and this implies that $\mathrm{grad}_{\omega}\varphi_{\delta}=(\mathrm{grad}_{\omega}\varphi)_{\delta}$. Consequently, $\mathrm{grad}_{\omega}\varphi_{\delta}\to \mathrm{grad}_{\omega}\varphi$.

\textit{(b)}
For $\eta \in H^1(\Omega)$, we have
\begin{align*}
-\expect{\varphi_{\delta},\mathrm{grad}_{\omega} \eta }_{L^2(\Omega)^n}=- \expect{\int_{\R^n}\rho_{\delta}(y)T_y\varphi \mathrm{grad}_{\omega}\eta dy} =-\int_{\R^{n}}\expect{\rho_{\delta}(y)T_y \varphi \mathrm{grad}_{\omega}\eta }dy.
\end{align*}
Using that $T_{-y}$ and (the infinitesimal generator) $\mathrm{grad}_{\omega}$ commute, we obtain that the last expression equals 
\begin{equation*}
-\int_{\R^n}\rho_{\delta}(y)\expect{\varphi \mathrm{grad}_{\omega} T_{-y}\eta}dy=\expect{\int_{\R^n}\rho_{\delta}(y)T_y \dive_{\omega} \varphi dy \eta}=\expect{\brac{\dive_{\omega}\varphi}_{\delta},\eta}_{L^2(\Omega)}.
\end{equation*}
As a result of this, we have $\dive_{\omega}\varphi_{\delta}=\brac{\dive_{\omega}\varphi}_{\delta}$ and $\dive_{\omega}\varphi_{\delta}\to \dive_{\omega}\varphi$.

\textit{(c)} Let $\eta \in \dom(\curl_{\omega}^{*})$. We have
\begin{equation*}
\expect{\varphi_{\delta}, \curl_{\omega}^{*} \eta}_{L^2(\Omega)^3}=\expect{\int_{\R^n}\rho_{\delta}(y) T_y \varphi \curl_{\omega}^{*}\eta dy}= \int_{\R^n}\rho_{\delta}(y) \expect{\varphi T_{-y}\curl_{\omega}^{*}\eta }dy.
\end{equation*}
In order to obtain the claim of the lemma, it is sufficient to show that $T_{-y}\curl_{\omega}^* \eta = \curl_{\omega}^{*} T_{-y}\eta$. Indeed, in that case the above expression equals
\begin{equation*}
\int_{\R^n} \rho_{\delta}(y) \expect{\varphi \curl_{\omega}^{*} T_{-y}\eta}dy= \expect{\int_{\R^n}\rho_{\delta}(y)T_{y}\curl_{\omega}\varphi dy \eta}= \expect{\brac{\curl_{\omega}\varphi}_{\delta},\eta}_{L^2(\Omega)^3},
\end{equation*}
and therefore the claim follows. In the following we show that $T_{-y}\curl_{\omega}^* \eta = \curl_{\omega}^{*} T_{-y}\eta$. Let $\psi \in H^1(\Omega)^3$. We have
\begin{equation*}
\expect{T_{-y} \eta, \curl_{\omega}\psi}_{L^2(\Omega)^3}= \expect{\eta, T_y \curl_{\omega} \psi}_{L^2(\Omega)^3}=\expect{\eta, \curl_{\omega} T_y \psi}_{L^2(\Omega)^3}=\expect{T_{-y}\curl_{\omega}^{*}\eta, \psi}_{L^2(\Omega)^3}
\end{equation*}
By density of $H^1(\Omega)^3$ in $\dom(\curl_{\omega})$, we obtain that for any $\psi \in \dom(\curl_{\omega})$, $\expect{T_{-y}\eta, \curl_{\omega}\psi}_{L^2(\Omega)^3}=\expect{T_{-y} \curl_{\omega}^* \eta, \psi}_{L^2(\Omega)^3}$ that implies $T_{-y}\eta \in \dom(\curl_{\omega}^*)$ and $T_{-y}\curl_{\omega}^* \eta = \curl_{\omega}^{*} T_{-y}\eta$. The proof is done.

\textit{(d)} The proof follows analogously to part \textit{(c)} if we obtain that for $\eta \in \dom(\curl_{\omega})$, $T_{-y}\curl_{\omega}\eta= \curl_{\omega} T_{-y}\eta$. In order to show this, we consider a sequence $(\eta_{k})_k$ in $H^1(\Omega)^3$ such that $\eta_{k}\to \eta$ and $\curl_{\omega}\eta_k \to \curl_{\omega}\eta$ (by definition such a sequence exists). The operator $T_{-y}$ is unitary and therefore we have $T_{-y}\eta_{k}\to T_{-y}\eta$ and $T_{-y}\curl_{\omega}\eta_k \to T_{-y}\curl_{\omega}\eta$. Moreover, since $T_{-y}\curl_{\omega}\eta_k=\curl_{\omega} T_{-y} \eta_k$, and using that $\curl_{\omega}$ is closed, we get $\curl_{\omega}T_{-y}\eta = T_{-y}\curl_{\omega}\eta$.

\textit{(e)} It is elementary to show that $\curl_\omega\subseteq \curl_\omega^*$; see also \cite[p 22]{Bourgeat1994}. The remaining inclusion follows from \textit{(d)} as this statement shows that $H^1(\Omega)$ is a core for $\curl_\omega^*$. Hence,
\[
    \curl_\omega\subseteq \curl_\omega^* = \overline{\curl_\omega^*|_{H^1(\Omega)}}=\overline{\curl_\omega|_{H^1(\Omega)}}=\curl_\omega.\qedhere
\]
\end{proof}
\end{lemma}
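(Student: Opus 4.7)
The plan rests on a single conceptual observation: the mollification $\phi \mapsto \phi_\delta = \int_{\R^n}\rho_\delta(y)T_y\phi\,dy$ is convolution of $\phi$ with the unitary group $T$, whose infinitesimal generators (componentwise) are the stochastic partial derivatives $\partial_{\omega,i}$. Consequently the analysis mimics the classical convolution-with-mollifiers calculus, with the crucial ingredient being that $T_y$ commutes with its own generators. That $\phi_\delta\in H^\infty(\Omega)$ and $\phi_\delta\to\phi$ in $L^2(\Omega)$ are the standard facts explicitly recalled just above the statement, so I would cite these and concentrate on (a)--(e). The main technical ingredients used throughout will be (i) the group identity $T_y\circ T_{he_i}=T_{he_i}\circ T_y$, which implies that $T_y$ maps $\dom(\partial_{\omega,i})$ into itself and commutes with $\partial_{\omega,i}$; and (ii) evenness of $\rho_\delta$, which lets us transfer the mollification onto a test function via the substitution $y\mapsto -y$.

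For parts (a) and (c) I would differentiate under the integral. Writing the difference quotient
\[
\frac{T_{he_i}\varphi_\delta - \varphi_\delta}{h} = \int_{\R^n}\rho_\delta(y)\,T_y\!\left(\frac{T_{he_i}\varphi - \varphi}{h}\right)dy
\]
and using compact support of $\rho_\delta$, boundedness of $T_y$, and strong convergence of the inner difference quotient in $L^2(\Omega)$ as $h\to 0$, one obtains $\partial_{\omega,i}\varphi_\delta = (\partial_{\omega,i}\varphi)_\delta$; applying the general convergence $\psi_\delta\to\psi$ with $\psi=\partial_{\omega,i}\varphi$ then yields $\mathrm{grad}_\omega\varphi_\delta\to\mathrm{grad}_\omega\varphi$. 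Since $\curl_\omega$ is (the closure of) a differential operator built componentwise from $\partial_{\omega,i}$ on $H^\infty(\Omega)^3$, the same argument, applied on a core and then extended by closedness, gives (c).

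For parts (b) and (d) I would argue by duality. Given $\varphi\in\dom(\dive_\omega)$ and a test $\eta\in H^1(\Omega)$, I compute
\[
\langle \varphi_\delta,\mathrm{grad}_\omega\eta\rangle = \int_{\R^n}\rho_\delta(y)\langle \varphi, T_{-y}\mathrm{grad}_\omega\eta\rangle\,dy = \int_{\R^n}\rho_\delta(y)\langle \varphi,\mathrm{grad}_\omega T_{-y}\eta\rangle\,dy,
\]
where in the last step I use that $T_{-y}$ commutes with $\mathrm{grad}_\omega$ on $H^1(\Omega)$ (itself a direct consequence of the group law). Moving $\dive_\omega$ across and re-mollifying gives $\dive_\omega\varphi_\delta=(\dive_\omega\varphi)_\delta$, and convergence follows as before. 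For (d) I would first verify, by a direct duality pairing of $T_{-y}\eta$ against $\curl_\omega\psi$ for $\psi\in H^1(\Omega)^3$, that $T_{-y}$ preserves $\dom(\curl_\omega^*)$ and commutes with $\curl_\omega^*$; the remainder of the argument then parallels (b).

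The main obstacle is (e), the self-adjointness $\curl_\omega=\curl_\omega^*$. The inclusion $\curl_\omega\subseteq\curl_\omega^*$ is easy: testing two elements of $H^\infty(\Omega)^3$ against each other and using the measure-preserving property of $\tau$ shows that $\nabla_\omega\times$ is formally symmetric on smooth random variables, whence its closure $\curl_\omega$ is contained in its adjoint. The reverse inclusion is where (d) is pivotal: given $\varphi\in\dom(\curl_\omega^*)$, part (d) yields $\varphi_\delta\in H^\infty(\Omega)^3\subseteq \dom(\curl_\omega)$ with $\varphi_\delta\to\varphi$ and $\curl_\omega^*\varphi_\delta=(\curl_\omega^*\varphi)_\delta\to \curl_\omega^*\varphi$; on $H^\infty(\Omega)^3$ the operators $\curl_\omega$ and $\curl_\omega^*$ coincide by the easy direction, so $\curl_\omega\varphi_\delta\to \curl_\omega^*\varphi$, and closedness of $\curl_\omega$ places $\varphi\in\dom(\curl_\omega)$ with matching image. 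Thus (d) provides $H^\infty(\Omega)^3$ as an operator core for $\curl_\omega^*$, which is exactly what is needed to close the argument.
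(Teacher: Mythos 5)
Your proposal is correct and follows essentially the same route as the paper: commutation of the mollification with the group generators via difference quotients for (a), duality for (b), and the mollification/core argument combined with the easy inclusion $\curl_\omega\subseteq\curl_\omega^*$ for (e). The only organizational difference is in (c)--(d): you obtain (c) directly from (a) applied componentwise on the core $H^\infty(\Omega)^3$ followed by closedness of $\curl_\omega$, and base (d) on the commutation of $T_{-y}$ with $\curl_\omega^*$ (proved by the duality pairing against $H^1(\Omega)^3$), whereas the paper proves (c) by exactly that duality/commutation argument and proves (d) by instead showing that $T_{-y}$ commutes with $\curl_\omega$ via approximation from $H^1(\Omega)^3$ and closedness; both variants are valid and yield the same core statement needed for (e).
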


\begin{proof}[Proof of Lemma \ref{lemma:981}]
Let $\varphi \in \kar(\curl_{\omega})$. Then $\varphi= P_{\mathsf{inv}} \varphi + \Psi$ where $\Psi= \varphi- P_{\mathsf{inv}}\varphi$. Using (\ref{equation:957}) it follows that $\Psi$ may be decomposed as follows
\begin{equation*}
\Psi = \Psi_1 + \Psi_2, 
\end{equation*}
where $\Psi_{1}\in \kar(\dive_{\omega})$ and $\Psi_2 \in \overline{\mathsf{ran}}(\mathrm{grad}_{\omega})$. In the following we show that $\Psi_1 = 0$. Using Lemma \ref{lemma:996} we find a sequence $(\varphi_{k})_k$ in $H^{\infty}(\Omega)$ such that $\mathrm{grad}_{\omega}\varphi_{k}\to \Psi_2$ as $k \to \infty$. Moreover, since $\curl_{\omega}\mathrm{grad}_{\omega}\varphi_{k}=0$ for all $k\in \mathbb{N}$, we conclude that $\Psi_2 \in \kar(\curl_{\omega})$ and therefore it follows that \[\Psi_1 = \Psi-\Psi_2 = \varphi - P_{\mathsf{inv}}\varphi -\Psi_2 \in \kar(\curl_{\omega}).\] For the mollified functions $\Psi_{1,\delta}$ (defined as in (\ref{eq:992})), by a direct computation we obtain
\begin{equation*}
\int_{\Omega}|\curl_{\omega}\Psi_{1,\delta}|^2+ |\dive_{\omega}\Psi_{1,\delta}|^2 = \int_{\Omega}|\mathrm{grad}_{\omega} \Psi_{1,\delta}|^2.
\end{equation*}
Using Lemma \ref{lemma:996} we pass to the limit $\delta \to 0$ and it follows that $\mathrm{grad}_{\omega}\Psi_{1,\delta} \to 0$. Moreover, since $\Psi_{1,\delta}\to \Psi_{1}$, we obtain that $\mathrm{grad}_{\omega}\Psi_{1}= 0$ and therefore $\Psi_{1}=0$ (using that $P_{\mathsf{inv}}\Psi_1=0$). This concludes the proof of the first part. The second claim follows from Lemma \ref{lemma:996} \textit{(e)}.
\end{proof}

\subsection{Periodic homogenization of elliptic equations}\label{sec:elliptic}
We start by showing that the classical example of periodic homogenization of elliptic equations fits into the previously described abstract framework (see Section \ref{sec:4}). We refer to \cite{Bensoussan1978,Allaire1992} for the standard treatment of elliptic equations with periodic coefficients and to \cite{Papanicolaou1979,Bourgeat1994} for its stochastic counterpart. Let $Q\subset \R^n$ be open and bounded. In this section, we consider the periodic unfolding operator $\unf: L^2(Q)\otimes L^2(\Box)\to L^2(Q)\otimes L^2(\Box)$ defined in (\ref{equation:869}).  

In this setting the role of $C_d$ and $C_s$ is played by $\nablao$ and $\mathrm{grad}_y^{\#}$, respectively.
Let $A \in L^\infty(Q\times\Box)^{n\times n}$ be such that there exists $c>0$ with $|A(x,y)| \leq \frac{1}{c}$ and $\frac{1}{2}\brac{A(x,y)+A(x,y)^*}\geq c$ a.e. We interpret $A$ as a multiplication operator in $(L^2(Q) \otimes L^2(\Box))^n$. For $\varepsilon>0$, we consider the following equation
\begin{equation}\label{eq:1154}
-\dive_x \mathcal{T}_{-\varepsilon} A \mathcal{T}_{\varepsilon}\nablao u_\varepsilon = f
\end{equation}
with $f \in L^2(Q)\otimes \C$. In this case, the term $\mathcal{T}_{-\varepsilon}A \unf \nablao u_\varepsilon$ boils down to the familiar expression $(x,y)\mapsto A\brac{x, y+ \cb{\frac{x}{\varepsilon}}_{\Box}}\nablao u_\varepsilon(x,y)$. Since $\nablao$ is injective and its range is closed (due to the Poincar{\'e} inequality), a direct application of Theorem \ref{thm:ellhom} and the fact that $\unf u = u$ for $u \in \kar(\mathrm{grad}_y^{\#})$ imply the following:
\begin{corollary}\label{cor:ellhom}
Let $A$ be given as above and $u_{\varepsilon}$ be the unique solution of (\ref{eq:1154}). Then
\begin{equation*}
\mathcal{T}_\varepsilon u_{\varepsilon}\to u \text{ strongly in }L^2(Q)\otimes L^2(\Box), \quad  \unf \nablao u_{\varepsilon} \rightharpoonup \nablao u + v \quad \text{weakly in }(L^2(Q)\otimes L^2(\Box))^n,
\end{equation*}  
where $u \in \dom(\nablao)\otimes \C$ and $v \in \overline{\rge}(\mathrm{grad}_y^{\#})$ are the unique solution to
\begin{align}\label{equation:889}
\begin{split}
-\dive_x P A (\nablao u + v)= f,\\
-\dive_y^{\#} A (\nablao u + v) = 0.
\end{split}
\end{align}
\end{corollary}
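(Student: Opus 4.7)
The plan is to match the concrete setting to the hypotheses of Theorem \ref{thm:ellhom} and transcribe the conclusion, since the corollary is advertised as ``a direct application''. Concretely, I take $H_d = L^2(Q)$, $H_s = L^2(\Box)$, $m=1$, $n$ as given, $C_d = \nablao$, $C_s = \mathrm{grad}_y^{\#}$, and the unfolding family $\unf$ from \eqref{equation:869}.

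First I would verify the abstract hypotheses \eqref{eq:core}--\eqref{eq:inv}. Condition \eqref{eq:core} is immediate with $D_1=D_2=C_c^\infty(Q)$ and $E_1=E_2=C^\infty(\Box)$, these being operator cores for $\nablao$, $-\dive_x$, $\mathrm{grad}_y^{\#}$, and $-\dive_y^{\#}$. Condition \eqref{eq:inverse} follows from the unitarity of $\unf$ noted after \eqref{equation:869}, and \eqref{eq:com1}--\eqref{eq:inv} are exactly the content of Lemma \ref{lemma:890} (together with the identification $\kar(\mathrm{grad}_y^{\#}) \simeq L^2(Q) \otimes \C$, which makes \eqref{eq:inv} meaningful). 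The multiplication operator $A$ is bounded with $\Re A \geq c$ by the assumption $\tfrac12(A(x,y)+A(x,y)^*)\geq c$.

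Next I would check the hypotheses specific to Theorem \ref{thm:ellhom}. The condition $f \in \kar(\mathrm{grad}_y^{\#})$ is built into $f \in L^2(Q)\otimes\C$. Injectivity of $\nablao$ and closedness of $\rge(\nablao)$ both follow from the Poincar\'e inequality on the bounded domain $Q$ (namely $\|u\|_{L^2(Q)}\leq C\|\nablao u\|$ on $\dom(\nablao)=H^1_0(Q)$, which is then tensorised to $\dom(\nablao)\otimes L^2(\Box)$). With these in hand, Theorem \ref{thm:ellhom} applies and yields: $(u_\epsilon)_\epsilon$ is well-defined and uniformly bounded in $\dom(\nablao)$; $\unf u_\epsilon \to u$ strongly and $\unf \nablao u_\epsilon \rightharpoonup \nablao u + v$ weakly in the respective tensor products; and $(u,v) \in (\dom(\nablao)\cap\kar(\mathrm{grad}_y^{\#})) \times \overline{\rge}(\mathrm{grad}_y^{\#})$ is the unique solution of
\begin{align*}
   \nablao^*\, P\, A(\nablao u+v) &= f,\\
   (\mathrm{grad}_y^{\#})^*\, A(\nablao u+v) &= 0,
\end{align*}
with $P$ the orthogonal projection onto $\kar(\mathrm{grad}_y^{\#})$. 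Using $\nablao^*=-\dive_x$ and $(\mathrm{grad}_y^{\#})^*=-\dive_y^{\#}$ this is precisely \eqref{equation:889}. Finally, the identification $\kar(\mathrm{grad}_y^{\#}) \simeq L^2(Q) \otimes \C$ (Poincar\'e on the torus, cf.\ the last displayed inequality in the proof of Lemma \ref{lemma:890}) rewrites $u \in \dom(\nablao)\otimes\C$ as claimed.

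There is essentially no obstacle here; the only tasks are bookkeeping. The one spot that could require a sentence of care is that the strong $2$-scale convergence in Theorem \ref{thm:ellhom} means $\unf u_\epsilon \to u$, not $u_\epsilon \to u$, so I would emphasise that the first asserted convergence in Corollary \ref{cor:ellhom} is simply a restatement of strong $2$-scale convergence via the definition given in Section \ref{s:gr}.
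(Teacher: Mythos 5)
Your proposal is correct and follows exactly the route the paper takes: verify injectivity and closed range of $\nablao$ via Poincar\'e on the bounded domain $Q$, check \eqref{eq:core}--\eqref{eq:inv} via Lemma \ref{lemma:890}, and then invoke Theorem \ref{thm:ellhom} directly, using the identification $\kar(\mathrm{grad}_y^{\#})\simeq L^2(Q)\otimes\C$ to rewrite the conclusion. The bookkeeping you supply (cores, adjoint identifications, meaning of strong $2$-scale convergence) is all sound and matches the paper's intent.
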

Above, $P$ is the projection to constant functions (in the $y$-variable), i.e., $P\varphi = \int_{\Box}\varphi dy$.

\begin{remark} In order to transform the above (two-scale) homogenized problem (\ref{equation:889}) into the usual one-scale form (see \cite{Allaire1992} for detailed investigation of such two-scale effective equations), we might introduce the following (uniquely defined) correctors  $\varphi_i \in \dom(\mathrm{grad}_y^{\#})$ ($i\in \cb{1,\ldots,n}$) by
\begin{equation*}
- \dive^{\#}_{y} A \brac{e_i + \mathrm{grad}_y^{\#} \varphi_i }=0, \quad \int_{\Box} \varphi_i = 0.
\end{equation*} 
It follows that the choice $v = \sum_{i=1}^{n} (\nablao u)_i \mathrm{grad}_y^{\#} \varphi_i$ satisfies the second equation in (\ref{equation:889}). Consequently, $u$ is the solution of the equation
\begin{equation*}
-\dive_{x} A_{\mathsf{hom}}\nablao u = f,
\end{equation*}
where $A_{\mathsf{hom}}e_i \cdot e_j = \int_{\Box}A(e_i + \mathrm{grad}_y^{\#} \varphi_i)\cdot (e_j + \mathrm{grad}_y^{\#} \varphi_j)dy$. The last equation is the classical form of the homogenized equation. Notice, however, that the result presented in Corollary \ref{cor:ellhom} is slightly different from the classical result; as the notion of convergence is different. We also refer to \cite[Remark on page 32]{Bourgeat1994} for a similar statement in the stochastic setting.
\end{remark}

\subsection{Stochastic homogenization of Maxwell's equations}\label{sec:maxwell}
In this section we consider stochastic homogenization of Maxwell's equations. We refer to \cite{Wellander2001,kristensson2003homogenization} for the treatment of Maxwell's equations in the periodic setting using two-scale convergence arguments. The stochastic-periodic case is treated in \cite{tachago2017stochastic} that is based on the notion of stochastic two-scale convergence from \cite{Bourgeat1994}. However, our approach is different, it relies on the operator theoretic formulation and on the unfolding strategy, in fact, the homogenization result Corollary \ref{cor:1199} readily follows from the abstract Theorem \ref{thm:mr}. Also, we treat a more general situation---in contrast to our case---in \cite{tachago2017stochastic} the assumptions on the coefficients do not allow for jumps or for regions, where the conductivity or dielectricity vanish. Moreover, as far as we know, the corrector type result Corollary \ref{cor:1326} is not presented earlier in the stochastic setting, see \cite[Theorem 3.3]{Wellander2001} for a similar periodic corrector result.

Let $Q\subseteq \R^3$ be open and $(\Omega,\Sigma, \mu, \tau)$ be a probability space satisfying Assumption \ref{assumpt:267} (with $n=3$). In this section we consider the stochastic unfolding operator $\unf: L^2(Q)\otimes L^2(\Omega)\to L^2(Q)\otimes L^2(\Omega)$ defined in (\ref{eq:298}). The role of $C_d$ and $C_s$ in this setting is played by the operators $\curlo$ and $\curl_{\omega}$, respectively. 

We set $H_0 = H_1 = (L^2(Q)\otimes L^2(\Omega))^3$ and $H= H_0\oplus H_1$. We consider $\nu_0> 0$, $\eta_0,\sigma_0,\mu_0 \in  L^{\infty}(Q\times \Omega)^{3\times 3}$ such that $\eta_0$ and $\mu_0$ are Hermitian a.e. and there exists $c>0$ such that
\begin{equation}\label{eq:etasigmamu}
\nu \eta_0 + \Re(\sigma_0) \geq c \quad (\text{for all }\nu > \nu_0), \quad \mu_0 \geq c.
\end{equation}
For $\varepsilon>0$ and $(f,g)\in L^2_{\nu}\brac{\R; L^2(Q)^3 \oplus L^2(Q)^3}$, we consider the following system of equations
\begin{align}\label{equation:935}
\overline{B}_{\varepsilon} \begin{pmatrix} u_{\varepsilon} \\ q_{\varepsilon}  \end{pmatrix}= \begin{pmatrix} f \\ g \end{pmatrix}, \; \text{where } B_{\varepsilon}= \mathcal{T}_{-\varepsilon} \begin{pmatrix}
      \partial_{t,\nu} \eta_0 + \sigma_0 & 0 \\ 0 & \partial_{t,\nu} \mu_0
   \end{pmatrix}  \unf + \begin{pmatrix} 0 & -\curl_x\\ \curlo & 0 \end{pmatrix}.
\end{align}
The above system represents a system of Maxwell's equations with random and oscillating coefficients (the oscillating coefficients have the form $\mathcal{T}_{-\varepsilon} \eta_0 \unf$ and similarly for $\sigma_0$ and $\mu_0$). According to Theorem \ref{thm:st} there exists a unique solution of the above equation $(u_{\varepsilon},q_{\varepsilon}) \in L^2_{\nu}(\R; H)$. Moreover, a direct application of Theorem \ref{thm:mr} yields the following:
\begin{corollary}\label{cor:1199}
Let $\eta_0, \sigma_0,\mu_0$ be given as above and let $(u_{\varepsilon},q_{\varepsilon})\in L^2_{\nu}(\R; H)$ be the unique solution to (\ref{equation:935}). We have
\begin{equation*}
\mathcal{T}_{\varepsilon}(u_{\varepsilon},q_{\varepsilon}) \rightharpoonup (\iota_s u, \iota_{s} q) \quad \text{weakly in }L^2_{\nu}(\R, H).
\end{equation*}
Above, $\iota_s$ denotes the canonical embedding $\iota_s: \kar(\curl_{\omega}) \hookrightarrow H_0=H_1$ and \[(u,q) \in L^2_{\nu}\brac{\R;\kar(\curl_{\omega}) \oplus \kar(\curl_{\omega})}\] denotes the unique solution to 
\begin{equation}\label{equation:966}
\overline{B}_{\mathsf{hom}}\begin{pmatrix} u \\ q  \end{pmatrix}= \begin{pmatrix} \iota_s^{*} f \\ \iota_{s}^* g \end{pmatrix}, \; \text{where }B_{\mathsf{hom}}= \begin{pmatrix}
      \partial_{t,\nu} \iota_s^{*} \eta_0 \iota_s + \iota_s^{*} \sigma_0 \iota_s & 0 \\ 0 & \partial_{t,\nu} \iota_{s}^*\mu_0 \iota_{s}
   \end{pmatrix} + \begin{pmatrix} 0 & - \overline{\iota_s^{*}\curl_x \iota_{s}}\\ \overline{\iota_{s}^* \curlo \iota_s} & 0 \end{pmatrix}.
\end{equation}
\end{corollary}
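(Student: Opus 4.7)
The plan is to recognize (\ref{equation:935}) as a specialization of the abstract problem (\ref{eq:tdp}) and invoke Theorem \ref{thm:mr}. I take $H_d = L^2(Q)$, $H_s = L^2(\Omega)$, $m = n = 3$, identify $C_d$ with (a sign of) $\curlo$ and $C_s$ with $\curl_\omega$, and set $\mathcal{M}_0(z) \coloneqq z\eta_0 + \sigma_0$, $\mathcal{M}_1(z) \coloneqq z\mu_0$ as (affine, hence analytic) families of multiplication operators on $H_0 = H_1 = (L^2(Q) \otimes L^2(\Omega))^3$. The functional calculus then turns $\mathcal{M}_k(\partial_{t,\nu})$ into exactly the diagonal block appearing in $B_\varepsilon$.

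Next I verify the standing assumptions of Theorem \ref{thm:mr}. Separability of $H = H_0 \oplus H_1$ follows from separability of $L^2(Q)$ and of $L^2(\Omega)$ (the latter by Assumption \ref{assumpt:267}). The structural conditions \eqref{eq:core}--\eqref{eq:com2} for $(\curlo, \curl_\omega)$ are provided by Lemma \ref{lemma:964}(b), choosing $D_1 = D_2 = C_c^\infty(Q)$ and $E_1 = E_2 = H^\infty(\Omega)$ to witness \eqref{eq:core}. For coercivity, since $\eta_0$ and $\mu_0$ are Hermitian a.e., any $z \in \mathbb{C}_{\Re > \nu_0}$ with $\nu \coloneqq \Re z$ satisfies
\[
\Re\langle \mathcal{M}_0(z)\phi, \phi\rangle = \nu\langle \eta_0\phi,\phi\rangle + \Re\langle \sigma_0\phi, \phi\rangle \geq c\|\phi\|^2,
\]
and analogously $\Re\langle \mathcal{M}_1(z)\phi, \phi\rangle \geq \nu_0 c\|\phi\|^2$, by hypothesis (\ref{eq:etasigmamu}). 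Finally, since $f, g$ are purely deterministic, at each $t$ they lie in $L^2(Q)^3 \otimes \mathbb{C}$, which sits inside $\kar(\curl_\omega)$ (as $\curl_\omega$ differentiates only in $\omega$) and is fixed pointwise by every $\unf$ by direct inspection of (\ref{eq:298}); using $\kar(\curl_\omega^*) = \kar(\curl_\omega)$ from Lemma \ref{lemma:996}(e) puts $g$ analogously in $\mathcal{H}_1$. Hence $(f,g) \in L^2_\nu(\R; \mathcal{H})$.

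Theorem \ref{thm:mr} then yields $\mathcal{T}_\varepsilon(u_\varepsilon, q_\varepsilon) = \mathcal{T}_\varepsilon\mathcal{S}_\varepsilon(\partial_{t,\nu})(f,g) \rightharpoonup \mathcal{S}_0(\partial_{t,\nu})(f,g)$ weakly in $L^2_\nu(\R; H)$. To match $\mathcal{S}_0(\partial_{t,\nu})(f,g)$ with the solution of (\ref{equation:966}), I appeal to Remark \ref{rem:rep}: the diagonal entries $\widetilde{\mathcal{M}}_k(z) = \iota_s^*\mathcal{M}_k(z)\iota_s$ reproduce the diagonal of $B_{\mathsf{hom}}$, while the off-diagonal entries $\widetilde{C_d} = \overline{\iota_{s^*}^*\curlo\iota_s}$ and $\widetilde{C_d}^*$ collapse to $\overline{\iota_s^*\curlo\iota_s}$ and its adjoint because $\iota_{s^*} = \iota_s$ (again using $\curl_\omega = \curl_\omega^*$).

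The main care in this proof is bookkeeping rather than any deep obstacle: one must track the sign convention between the Maxwell off-diagonal block $\bigl(\begin{smallmatrix} 0 & -\curl_x \\ \curlo & 0 \end{smallmatrix}\bigr)$ and the abstract $\bigl(\begin{smallmatrix}0 & C_d^* \\ -C_d & 0\end{smallmatrix}\bigr)$, and recognize that the single embedding $\iota_s$ in (\ref{equation:966}) is precisely the collapse of the two embeddings $\iota_s$ and $\iota_{s^*}$ from Theorem \ref{thm:mr}, a collapse that relies critically on the self-adjointness $\curl_\omega = \curl_\omega^*$ and would not occur for, e.g., gradient/divergence pairs.
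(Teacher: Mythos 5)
Your proposal is correct and follows the paper's own approach: Corollary \ref{cor:1199} is obtained in the paper exactly as ``a direct application of Theorem~\ref{thm:mr}'', and you supply the missing bookkeeping (choice of $\mathcal M_0,\mathcal M_1$, verification of \eqref{eq:core}--\eqref{eq:com2} via Lemma~\ref{lemma:964}(b), coercivity from \eqref{eq:etasigmamu}, membership of $(f,g)$ in $\mathcal H$, and collapse $\iota_s=\iota_{s^*}$ via $\curl_\omega=\curl_\omega^*$) in essentially the way the authors intend. One small consistency remark: once you fix $C_d=-\curlo$ to match the signs in \eqref{eq:tdp}, identity \eqref{eq:com1} then forces $C_s=-\curl_\omega$ rather than $\curl_\omega$; this is immaterial since $\kar$, $\rge$, and the projections only see $C_s$ up to a nonzero scalar, but it is worth noting when quoting Lemma~\ref{lemma:964}(b).
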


Note that the fact $(u,q) \in L_\nu^2(\R; \kar(\curl_{\omega}) \oplus \kar(\curl_{\omega}))$ does \emph{not} imply in general that the limit solution is deterministic (or shift-invariant), i.e., the solution still depends on the probability space variable $\omega$. In the following we present an equivalent formulation of the above equation by decomposing the solution $(u,q)$ to its deterministic (shift-invariant) and random parts. In particular, the deterministic part satisfies an effective Maxwell system and the random part is given by a suitable corrector equation.
We introduce the following canonical embeddings
\begin{align*}
\iota_{0}: L^2_{\mathsf{inv}}(\Omega)^3 \hookrightarrow \kar(\curl_{\omega}), \quad 
\iota_{1}: L^2_{\mathsf{pot}}(\Omega) \hookrightarrow \kar(\curl_{\omega}).
\end{align*}
Moreover, we consider the transformation 
\[T\coloneqq \begin{pmatrix} \iota_{0} \; & \iota_{1} \; & 0 \; & 0 \\ 0 \; &0 \; &\iota_{0} \; &\iota_{1} \end{pmatrix} : L^2_{\mathsf{inv}}(\Omega)^3 \oplus L^2_{\mathsf{pot}}(\Omega)\oplus L^2_{\mathsf{inv}}(\Omega)^3 \oplus L^2_{\mathsf{pot}}(\Omega)\to \kar(\curl_{\omega})\oplus \kar(\curl_{\omega})\] that is unitary (by Lemma \ref{lemma:981}). As a result of this, we obtain the following:
\begin{corollary}
Let $(u,q) \in L^2_{\nu}\brac{\R;\kar(\curl_{\omega}) \oplus \kar(\curl_{\omega})}$ be the solution of (\ref{equation:966}). Then
 \[
 \begin{pmatrix}u_0 \\ \chi_1 \\q_0 \\ \chi_2\end{pmatrix}\coloneqq T^{-1}(u,q) \in L^2_{\nu}\left(\R; \big( (L^2(Q)\otimes L^2_{\mathsf{inv}}(\Omega))^3 \oplus (L^2(Q)\otimes L^2_{\mathsf{pot}}(\Omega))\big)^2\right)
 \] is the unique solution to  
\begin{equation}\label{equation:997}
\overline{T^{-1}B_{\mathsf{hom}} T}\begin{pmatrix} u_0 \\ \chi_1 \\ q_0 \\ \chi_2 \end{pmatrix}= T^{-1}\begin{pmatrix} \iota_s^* f \\ \iota_{s}^* g \end{pmatrix}.
\end{equation}
\end{corollary}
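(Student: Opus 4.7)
The plan is to treat this as a pure change-of-variables statement driven by the unitarity of $T$, which has already been established. First I would lift $T$ pointwise in time to a unitary operator on the corresponding $L^2_\nu(\R;\cdot)$-spaces (which I continue to denote by $T$), and observe that this lift commutes with $\partial_{t,\nu}$ since $T$ acts only in the spatial/stochastic variables. In particular, $T$ is bounded with bounded inverse, so conjugation by $T$ sends the domain of any closable operator to the domain of its conjugate.

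Next I would invoke the standard operator-theoretic fact that for any closable operator $B$ and any unitary $T$ one has $T^{-1}\overline{B}\,T = \overline{T^{-1}B T}$. Starting from (\ref{equation:966}), namely $\overline{B_{\mathsf{hom}}}(u,q)^\top = (\iota_s^*f, \iota_s^*g)^\top$, I would apply $T^{-1}$ from the left and insert $T T^{-1} = \mathrm{id}$ between $\overline{B_{\mathsf{hom}}}$ and $(u,q)^\top$. Using the closure identity, the left-hand side becomes $\overline{T^{-1}B_{\mathsf{hom}}T}\,T^{-1}(u,q)^\top$, and by the very definition $T^{-1}(u,q)^\top = (u_0,\chi_1,q_0,\chi_2)^\top$, which yields exactly (\ref{equation:997}).

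For uniqueness I would reverse the procedure: if $(\tilde u_0,\tilde\chi_1,\tilde q_0,\tilde\chi_2)$ is any solution to (\ref{equation:997}), applying $T$ from the left and invoking $T\,\overline{T^{-1}B_{\mathsf{hom}}T}\,T^{-1} = \overline{B_{\mathsf{hom}}}$ shows that $T(\tilde u_0,\tilde\chi_1,\tilde q_0,\tilde\chi_2)^\top$ solves (\ref{equation:966}). The uniqueness statement already recorded in Corollary \ref{cor:1199} (itself a consequence of Theorem \ref{thm:st} applied to $B_{\mathsf{hom}}$) then forces $T(\tilde u_0,\tilde\chi_1,\tilde q_0,\tilde\chi_2)^\top = (u,q)^\top$, and injectivity of $T$ gives the desired equality of tuples.

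The only point that warrants more than a one-line justification is the commutation of closure and unitary conjugation, $T^{-1}\overline{B}\,T = \overline{T^{-1}B T}$; this is standard, and follows at once from the fact that $T$ and $T^{-1}$ are bounded, hence the graph of $T^{-1}B T$ is the image under $(T^{-1}\oplus T^{-1})$ of the graph of $B$, and taking closures commutes with this continuous linear bijection. I expect this to be the only step requiring an explicit (but short) verification; everything else is bookkeeping.
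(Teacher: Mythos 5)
Your argument is correct and is exactly the reasoning the paper has in mind: the Corollary is stated without proof, presented as an immediate consequence of the unitarity of $T$ (established via Lemma \ref{lemma:981}). Your write-up simply makes explicit the standard bookkeeping the paper suppresses — lifting $T$ to the $L^2_\nu$-level, the commutation with $\partial_{t,\nu}$, the identity $T^{-1}\overline{B}T=\overline{T^{-1}BT}$ via the $(T^{-1}\oplus T^{-1})$-image of the graph, and reduction of uniqueness to that already known for $\overline{B}_{\mathsf{hom}}$ — so it matches the paper's intent.
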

\begin{remark}\label{rem:1214}
Dropping the notation for the embeddings $\iota_s, T$ and closure bars, system (\ref{equation:997}) reads 
\begin{align*}
\partial_{t,\nu}  P_{\mathsf{inv}} \eta_0 (u_0+ \chi_1) + P_{\mathsf{inv}}\sigma_0 \brac{u_0+\chi_1}-\curl_{x} q_0 & = f\\
\partial_{t,\nu} P_{\mathsf{pot}}\eta_0 (u_0+ \chi_1)+ P_{\mathsf{pot}}\sigma_0 (u_0+ \chi_1) & =0 \\
\partial_{t,\nu} P_{\mathsf{inv}}\mu_0 (q_0+\chi_2)+ \curlo u_0 &= g \\
\partial_{t,\nu} P_{\mathsf{pot}}\mu_0 (q_0+ \chi_2) & = 0.  
\end{align*}
Note that in the second equation we used that $P_{\mathsf{pot}}\curl_{x}(q_0+\chi_2)=0$ since $P_{\mathsf{pot}}q_0= 0$ and $P_{\mathsf{pot}}\curl_x\chi_2=0$ (that can be obtained by a direct computation) and in the fourth equation we use $P_{\mathsf{pot}}\curlo (u_0+\chi_1)=0$ (obtained similarly as the previous claim).
We regard the first and third equations as the effective Maxwell system for the (averaged) variable $(u_0,q_0)$ and the second and fourth equations are corrector equations which allow us to express $(\chi_1,\chi_2)$ as functions of $(u_0,q_0)$. In particular, the second and fourth equations imply that (additionally assuming that $\eta_0$ is positive-definite)
\begin{align*}
\chi_1(t) &=-\chi_{\eta_0}^i u_0^i(t) + e^{-A t}\brac{\chi_{\eta_0}^i u_0^{i}(0)+\chi_1(0)}+\int_{0}^t e^{-A(t-\tau)}A\brac{\chi_{\eta_0}^i-\chi_{\sigma_0}^i}u_0^i(\tau)d\tau, \\
\chi_2(t) &=
-\chi_{\mu_0}^{i}q_0^i(t)+\chi_{\mu_0}^i q_0^i(0)+\chi_2(0),
\end{align*}
where $A= \eta_0^{-1} \sigma_0$ and $\chi_{\eta_0}^i,\chi_{\sigma_0}^i,\chi_{\mu_0}^i\in L^2(Q)\otimes L^2_{\mathsf{pot}}(\Omega)$ ($i\in\{1,2,3\}$) are the unique solutions to (respectively) 
\begin{equation}\label{eq:1230}
P_{\mathsf{pot}}\eta_0 (e_i-\chi_{\eta_0}^i)=0, \quad P_{\mathsf{pot}}\sigma_0 (e_i-\chi_{\sigma_0}^i)=0, \quad P_{\mathsf{pot}}\mu_0 (e_i-\chi_{\mu_0}^i)=0.
\end{equation}
The derivation of these formulas is analogous to the periodic case \cite{Wellander2001}
 (see also \cite{tachago2017stochastic} for the periodic-stochastic case).
In this sense, the effective equations for the averaged variables $(u_0,q_0)$ are non-local in time. This so-called memory effect has also been observed in \cite{W16_HPDE} and \cite{W18_NHC} for coefficients that are not necessarily periodic or stochastic.
\end{remark}
\begin{remark}[Corrector equations]
The equations in \eqref{eq:1230} are standard corrector equations in stochastic homogenization and might be brought into the form of an elliptic partial differential equation on $\R^3$. For simplicity let us consider the first equation in \eqref{eq:1230} and assume that $\eta_0$ does not depend on the physical space variable, i.e., $\eta_0(x,\omega)=\eta_0(\omega)$. The equation is equivalent to
\begin{equation}\label{eq:1232}
\int_{\Omega} \eta_0(\omega) (e_i - \chi_{\eta_0}^i(\omega))\cdot \chi(\omega)d\mu(\omega) = 0 \quad \text{for all }\chi \in L^2_{\mathrm{pot}}(\Omega),
\end{equation}
which is a variational problem in the $L^2$-probability space. It turns out that for $\mu$-almost every $\omega\in\Omega$, the vector field $x\mapsto \chi_{\eta_0}^i(\tau_x\omega)$ has a potential $\varphi(\omega,\cdot)\in H^1_{\mathrm{loc}}(\R^3)$ that is a distributional solution to
\begin{equation}\label{eq:1236}
- \nabla\cdot \brac{\eta_0(\tau_x\omega) \brac{e_i - \nabla \varphi(\omega,x)}}=0 \quad \text{in }\R^3.
\end{equation}
On the other hand, by standard theory in stochastic homogenization there exists a unique random field $\varphi:\Omega\times\R^3\to\R$ that is a  distributional solution to \eqref{eq:1236} for $\mu$-a.e.~$\omega\in\Omega$ such that $\nabla\varphi$ is stationary (i.e., $\nabla\varphi(\omega,x+z)=\nabla\varphi(\tau_z\omega,x)$ for $\mu$-a.e.~$\omega\in\Omega$ and a.e.~$x,z\in\R^3$), square integrable $\int_\Omega|\nabla\varphi(\omega,x)|^2\,d\mu(\omega)<\infty$, mean-free $\int_\Omega\nabla\varphi\,d\mu=0$, and anchored in the sense of $\int_{(0,1)^d}\varphi(\omega,x)\,dx=0$, $\mu$-a.s. With this solution we recover the random vector field $\chi_{\eta_0}^i(\omega):=\nabla\varphi(\omega,0)$ solving \eqref{eq:1232}, see \cite[Section 2.2]{neukamm2018introduction} for details. 

Note that both formulations, \eqref{eq:1232} and \eqref{eq:1236}, are not accessible to a direct numerical approximation, since in the case of \eqref{eq:1232} a typical example for the probability measure $\mu$ would be a product-measure of the form $\hat\mu^{\otimes \R^3}$, while \eqref{eq:1236} is posed on the unbounded domain $\R^3$. A standard approximation scheme is the so-called periodization method, where in \eqref{eq:1236} the domain $\R^3$ is replaced by a large torus, say $L\Box$ with $L\gg 1$, see, e.g., \cite{bourgeat2004approximations}.
\end{remark}

\textbf{Corrector type results for Maxwell equations.} First, we recall a (standard) mollification procedure from \cite{picard2017maximal} (see also \cite{waurick2015non,PTWW13_NA}). For $\delta>0$, we consider the bounded operator $\mathcal{F}_{\delta}:= \brac{1+ \delta \partial_{t,\nu}}^{-1}$. We remark that $\mathcal{F}_{\delta} \to 1$ as $\delta\to 0$ in the strong operator topology and that for any $\varphi \in \dom(\overline{B}_{\varepsilon})$, we have $\mathcal{F}_{\delta}\varphi \in \dom(B_{\varepsilon})$ and $\mathcal{F}_{\delta}\overline{B}_{\varepsilon}\varphi = B_{\varepsilon} \mathcal{F}_{\delta}\varphi$  and the same statements hold if we replace $B_{\varepsilon}$ by $B_{\mathsf{hom}}$. Using $\mathcal{F}_{\delta}$ and the properties of $(u_{\varepsilon},q_{\varepsilon})$, we obtain the following corrector type results. Before that, we provide another auxiliary result.
\begin{lemma}\label{lem:curlclo} We have
\[
   \overline{\iota_s^{*}\curl_x \iota_{s}} = \iota_s^*\curl_x \iota_s\text{ and }
      \overline{\iota_s^{*}\curlo \iota_{s}} = \iota_s^*\curlo \iota_s.
\]
\end{lemma}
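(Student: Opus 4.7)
The plan is to show that in both identities the closures on the right-hand sides are redundant, i.e., that $\iota_s^*\curl_x\iota_s$ and $\iota_s^*\curlo\iota_s$ are already closed operators. Writing $P\coloneqq \iota_s\iota_s^*$ for the orthogonal projection of $H_0$ onto $\kar(\curl_\omega)$, the crux is to establish the commutation relations
\[
P\curlo\subseteq \curlo P\quad\text{and}\quad P\curl_x\subseteq \curl_x P.
\]

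First I would prove $P\curlo\subseteq \curlo P$. The projection $P$ acts only on the $L^2(\Omega)^3$-factor of $H_0=L^2(Q)\otimes L^2(\Omega)^3$, while $\curlo$ acts only on the $L^2(Q)$-factor; this is a direct (vector-valued) adaptation of Lemma \ref{lem:proj} to the closed subspace $\kar(\curl_\omega)\subseteq L^2(\Omega)^3$ and rests on the standard tensor product calculus. For the companion relation $P\curl_x\subseteq \curl_x P$ I would use a short duality argument: for $u\in\dom(\curl_x)$ and $w\in\dom(\curlo)$, the first commutation yields $Pw\in\dom(\curlo)$ with $\curlo Pw=P\curlo w$, hence
\[
\langle P\curl_x u,w\rangle = \langle \curl_x u,Pw\rangle = \langle u,\curlo Pw\rangle = \langle u,P\curlo w\rangle = \langle Pu,\curlo w\rangle,
\]
so that $Pu\in\dom(\curlo^*)=\dom(\curl_x)$ and $\curl_x Pu=P\curl_x u$.

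Next, using the commutations together with $Pu=u$ for $u\in\kar(\curl_\omega)$, any $u\in\dom(\curl_x)\cap\kar(\curl_\omega)$ satisfies $\curl_x u=\curl_x Pu=P\curl_x u\in \kar(\curl_\omega)$, so $\iota_s^*\curl_x\iota_s u=\curl_x u$; analogously $\iota_s^*\curlo\iota_s u=\curlo u$ for $u\in\dom(\curlo)\cap\kar(\curl_\omega)$. In other words, both operators coincide with the restrictions of $\curl_x$ and $\curlo$ to the intersected domains, with no projection remaining on the output side.

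Closedness of these restrictions is then automatic: given $(u_n)$ in $\dom(\curl_x)\cap\kar(\curl_\omega)$ with $u_n\to u$ and $\curl_x u_n\to v$ in $H_0$, closedness of $\curl_x$ yields $u\in\dom(\curl_x)$ with $\curl_x u=v$, while closedness of $\kar(\curl_\omega)$ as a subspace of $H_0$ gives $u\in\kar(\curl_\omega)$; the $\curlo$ case is identical. The main obstacle is the tensor-product commutation $P\curlo\subseteq \curlo P$ of the first step; this is precisely the mechanism underlying Lemma \ref{lem:proj} (and Lemma \ref{lem:TPPT}), and once accepted, the remainder follows directly.
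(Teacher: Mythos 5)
Your proof is correct and follows essentially the same strategy as the paper: both arguments reduce the claim to the invariance of $\kar(\curl_\omega)$ under $\curl_x$ and $\curlo$, after which closedness of the unrestricted operators and closedness of $\kar(\curl_\omega)$ finish the job. Where the paper justifies the invariance by the terse remark that the spatial and probabilistic curls commute on the intersection of their domains, you instead derive it cleanly from the projection commutation $P\curlo\subseteq\curlo P$ (via the tensor-product mechanism of Lemma \ref{lem:proj}) and its dual $P\curl_x\subseteq\curl_x P$, which makes the domain bookkeeping more transparent but amounts to the same idea.
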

\begin{proof}
  Since $\curl_x$ and $\curlo$ are closed linear operators, the claim follows, if we show that the application of $\curl_x$ and $\curlo$ leaves $\kar(\curl_\omega)$ invariant. This, however, follows from the fact that $\curl_x$ and $\curl_\omega$ ($\curlo$ and $\curl_\omega$) commute on the intersection of their domains.
\end{proof}
In order to avoid clutter in notation, in the following we disregard the notation for the embedding $\iota_s$ and $T$.
\begin{proposition}\label{prop:1277}
Let $\delta>0$, $(u_{\varepsilon}, q_{\varepsilon})$ be the solution of (\ref{equation:935}) and $(u_0,\chi_1, q_0, \chi_2)$ be the solution of (\ref{equation:997}). Then
\begin{equation*}
\|\mathcal{F}_{\delta}\brac{u_{\varepsilon} - \mathcal{T}_{-\varepsilon}\chi_1 - u_0} \|_{L^2_{\nu}(\R; H_0)} + \|\mathcal{F}_{\delta}\brac{q_{\varepsilon}-\mathcal{T}_{-\varepsilon}\chi_2 - q_0}\|_{L^2_{\nu}(\R; H_1)} \to 0 \quad \text{as }\varepsilon\to 0.
\end{equation*}
\end{proposition}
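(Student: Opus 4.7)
The strategy is to turn the weak two-scale convergence of $(u_\varepsilon,q_\varepsilon)$ into a strong convergence statement after mollification by $\mathcal{F}_\delta$ via a ``convergence of energies'' argument made possible by the positivity of $\mathcal{M}(\partial_{t,\nu})=\diag(\partial_{t,\nu}\eta_0+\sigma_0,\partial_{t,\nu}\mu_0)$ and the skew-self-adjointness of the block curl matrix $A$.

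First I would use that $\mathcal{F}_\delta$ commutes with $\overline{B}_\varepsilon$ and (as noted before the proposition) maps $\dom(\overline{B}_\varepsilon)$ into $\dom(B_\varepsilon)=\dom(M_\varepsilon)\cap\dom(A)$; hence $\mathcal{F}_\delta(u_\varepsilon,q_\varepsilon)\in\dom(B_\varepsilon)$ and $B_\varepsilon\mathcal{F}_\delta(u_\varepsilon,q_\varepsilon)=\mathcal{F}_\delta(f,g)$. Testing against $\mathcal{F}_\delta(u_\varepsilon,q_\varepsilon)$, taking real parts so that the skew-adjoint block $A$ drops out, and exploiting the unitarity of $\mathcal{T}_\varepsilon$ together with its commutation with $\mathcal{F}_\delta$, I obtain
\[
\Re\langle \mathcal{M}(\partial_{t,\nu})\mathcal{F}_\delta\mathcal{T}_\varepsilon(u_\varepsilon,q_\varepsilon),\mathcal{F}_\delta\mathcal{T}_\varepsilon(u_\varepsilon,q_\varepsilon)\rangle = \Re\langle \mathcal{F}_\delta(f,g),\mathcal{F}_\delta(u_\varepsilon,q_\varepsilon)\rangle.
\]
Since $(f,g)$ is deterministic and therefore $\mathcal{T}_\varepsilon$-invariant, and since $\mathcal{T}_\varepsilon(u_\varepsilon,q_\varepsilon)\rightharpoonup(u_0+\chi_1,q_0+\chi_2)$ weakly in $L^2_\nu(\R;H)$ by Corollary \ref{cor:1199}, the right-hand side converges to $\Re\langle\mathcal{F}_\delta(f,g),\mathcal{F}_\delta(u_0+\chi_1,q_0+\chi_2)\rangle$ as $\varepsilon\to 0$.

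The same test applied to the homogenized equation $\overline{B}_{\mathsf{hom}}(u,q)=(\iota_s^*f,\iota_s^*g)$, together with $\widetilde{\mathcal{M}}=\iota_s^*\mathcal{M}\iota_s$ and the identification $\iota_s(u,q)=(u_0+\chi_1,q_0+\chi_2)$ in $H$, gives
\[
\Re\langle \mathcal{M}(\partial_{t,\nu})\mathcal{F}_\delta(u_0+\chi_1,q_0+\chi_2),\mathcal{F}_\delta(u_0+\chi_1,q_0+\chi_2)\rangle = \Re\langle\mathcal{F}_\delta(f,g),\mathcal{F}_\delta(u_0+\chi_1,q_0+\chi_2)\rangle,
\]
which, matched against the previous identity, yields the convergence of energies
\[
\lim_{\varepsilon\to 0}\Re\langle \mathcal{M}(\partial_{t,\nu})\mathcal{F}_\delta\mathcal{T}_\varepsilon(u_\varepsilon,q_\varepsilon),\mathcal{F}_\delta\mathcal{T}_\varepsilon(u_\varepsilon,q_\varepsilon)\rangle=\Re\langle \mathcal{M}(\partial_{t,\nu})\mathcal{F}_\delta(u_0+\chi_1,q_0+\chi_2),\mathcal{F}_\delta(u_0+\chi_1,q_0+\chi_2)\rangle.
\]

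Finally, since $\phi\mapsto\Re\langle\mathcal{M}(\partial_{t,\nu})\phi,\phi\rangle$ is a bounded Hermitian form on $L^2_\nu(\R;H)$ that is coercive and equivalent to $\|\cdot\|^2$ thanks to (\ref{eq:etasigmamu}), a standard Radon--Riesz argument (write $\Re\langle\mathcal{M}(\mathcal{F}_\delta\mathcal{T}_\varepsilon(u_\varepsilon,q_\varepsilon)-\mathcal{F}_\delta(u_0+\chi_1,q_0+\chi_2)),\,\text{same}\rangle$ and expand) combines this convergence of energies with the weak convergence $\mathcal{F}_\delta\mathcal{T}_\varepsilon(u_\varepsilon,q_\varepsilon)\rightharpoonup\mathcal{F}_\delta(u_0+\chi_1,q_0+\chi_2)$ to deliver strong $L^2_\nu(\R;H)$-convergence. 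Because $\mathcal{T}_\varepsilon$ is unitary, commutes with $\mathcal{F}_\delta$, and fixes the deterministic functions $u_0,q_0$, this strong convergence translates into $\|\mathcal{F}_\delta(u_\varepsilon-\mathcal{T}_{-\varepsilon}\chi_1-u_0,\,q_\varepsilon-\mathcal{T}_{-\varepsilon}\chi_2-q_0)\|_{L^2_\nu(\R;H)}\to 0$, and splitting the pair gives the claim. The main technical hurdle is the first step: justifying that $\mathcal{F}_\delta$ regularizes enough to place the solution in $\dom(B_\varepsilon)$, so that one may genuinely decompose $B_\varepsilon=M_\varepsilon+A$ pointwise, drop the skew-adjoint contribution, and move $\mathcal{T}_\varepsilon$ across $\mathcal{M}(\partial_{t,\nu})$ exactly as in the formal computation.
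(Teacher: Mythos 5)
Your route (first prove convergence of ``energies'' $\Re\langle\mathcal{M}(\partial_{t,\nu})\mathcal{F}_\delta\mathcal{T}_\varepsilon v_\varepsilon,\mathcal{F}_\delta\mathcal{T}_\varepsilon v_\varepsilon\rangle \to \Re\langle\mathcal{M}(\partial_{t,\nu})\mathcal{F}_\delta v_0,\mathcal{F}_\delta v_0\rangle$ and then upgrade weak to strong convergence by a Radon--Riesz-type expansion) is genuinely different in organization from the paper's proof, which instead tests $B_\varepsilon\mathcal{F}_\delta(\cdot)$ directly against the error $\mathcal{F}_\delta(u_\varepsilon-\mathcal{T}_{-\varepsilon}\chi_1-u_0,\ q_\varepsilon-\mathcal{T}_{-\varepsilon}\chi_2-q_0)$, uses coercivity of the (unfolded) material law and skew-self-adjointness of the curl block to bound the error norm by one real pairing, and then splits that pairing into two weakly vanishing terms. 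The two proofs rest on the same ingredients (uniform coercivity, $\mathcal{T}_\varepsilon$-invariance of the right-hand side, the commutation relation pushing $\mathcal{T}_\varepsilon$ through the curl block, and the regularization by $\mathcal{F}_\delta$); your detour through energy identities is a clean alternative that also works.

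One claim in your argument is, however, incorrect as stated: $\phi\mapsto\Re\langle\mathcal{M}(\partial_{t,\nu})\phi,\phi\rangle$ is \emph{not} a bounded Hermitian form on $L^2_\nu(\mathbb{R};H)$, and it is \emph{not} equivalent to $\|\cdot\|^2$. Since $\mathcal{M}(\partial_{t,\nu})=\diag(\partial_{t,\nu}\eta_0+\sigma_0,\ \partial_{t,\nu}\mu_0)$ contains the unbounded operator $\partial_{t,\nu}$, the form is only defined on $\dom(\partial_{t,\nu})$ and satisfies the one-sided coercivity estimate $\geq c\|\cdot\|^2$ but no upper bound of that type. (Indeed, if the form were equivalent to $\|\cdot\|^2$, the mollifier $\mathcal{F}_\delta$---which you correctly keep throughout---would be superfluous.) Fortunately, the specific expansion you outline only needs coercivity from below plus the fact that the \emph{limit} function $\mathcal{F}_\delta(u_0+\chi_1,q_0+\chi_2)$ lies in $\dom(\partial_{t,\nu})$: expanding $\Re\langle\mathcal{M}(\partial_{t,\nu})(w_\varepsilon-w_0),w_\varepsilon-w_0\rangle$ with $w_\varepsilon:=\mathcal{F}_\delta\mathcal{T}_\varepsilon(u_\varepsilon,q_\varepsilon)$ and $w_0:=\mathcal{F}_\delta(u_0+\chi_1,q_0+\chi_2)$, the cross-terms can be rewritten as $\Re\langle w_\varepsilon,(\mathcal{M}+\mathcal{M}^*)(\partial_{t,\nu})w_0\rangle$, and pass to the limit by the weak convergence $w_\varepsilon\rightharpoonup w_0$ because $(\mathcal{M}+\mathcal{M}^*)(\partial_{t,\nu})w_0\in L^2_\nu$. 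So the conclusion is sound, but the justification should invoke coercivity and the $\mathcal{F}_\delta$-regularity of the limit, not boundedness or norm-equivalence of the form.
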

\begin{proof}
Using the facts that 
\[
\Re \left\langle \begin{pmatrix}u_\epsilon \\ q_\epsilon \end{pmatrix},\invunf \begin{pmatrix}   \partial_{t,\nu} \eta_0 + \sigma_0 \; & 0 \\  0 \; & \partial_{t,\nu} \mu_0 \end{pmatrix}\unf \begin{pmatrix}u_\epsilon \\ q_\epsilon \end{pmatrix}\right\rangle \geq c \left\|\begin{pmatrix}u_\epsilon \\ q_\epsilon \end{pmatrix}\right\|^2
\]
 (using the assumptions on $\eta,\sigma, \mu$, see \eqref{eq:etasigmamu}) and that $\begin{pmatrix}0 & -\curl_{x}\\
\curlo & 0 \end{pmatrix}$ is skew-self-adjoint, we obtain
\begin{eqnarray}\label{equation:1041}
&& \|\mathcal{F}_{\delta}\brac{ u_{\varepsilon} - \mathcal{T}_{-\varepsilon}\chi_1 - u_0 }\|^2_{L^2_{\nu}(\R; H_0)} + \|\mathcal{F}_{\delta}\brac{q_{\varepsilon}-\mathcal{T}_{-\varepsilon}\chi_2 - q_0}\|^2_{L^2_{\nu}(\R; H_1)} \\
& \leq & \frac{1}{c} \Re \expect{B_{\varepsilon} \mathcal{F}_{\delta}\begin{pmatrix}u_{\varepsilon} - \mathcal{T}_{-\varepsilon}\chi_1-u_0 \\ q_{\varepsilon}-\mathcal{T}_{-\varepsilon}\chi_2-q_0 \end{pmatrix},\mathcal{F}_{\delta}\begin{pmatrix}u_{\varepsilon} - \mathcal{T}_{-\varepsilon}\chi_1-u_0 \\ q_{\varepsilon}-\mathcal{T}_{-\varepsilon}\chi_2-q_0\end{pmatrix}} \nonumber\\
& = & \frac{1}{c} \Re \expect{\mathcal{F}_{\delta} \overline{B}_{\varepsilon}\begin{pmatrix}  u_{\varepsilon} \\ q_{\varepsilon}\end{pmatrix},\mathcal{F}_{\delta}\begin{pmatrix} u_{\varepsilon}-\mathcal{T}_{-\varepsilon}\chi_1- u_0\\ q_{\varepsilon}- \mathcal{T}_{-\varepsilon}\chi_2 - q_0 \end{pmatrix}} \nonumber \\
& & - \frac{1}{c} \Re \expect{B_{\varepsilon}\mathcal{F}_{\delta} \begin{pmatrix}  u_0+ \mathcal{T}_{-\varepsilon}\chi_1 \\ q_0 + \mathcal{T}_{-\varepsilon}\chi_2 \end{pmatrix},\mathcal{F}_{\delta}\begin{pmatrix} u_{\varepsilon}-\mathcal{T}_{-\varepsilon}\chi_1- u_0\\ q_{\varepsilon}- \mathcal{T}_{-\varepsilon}\chi_2 - q_0 \end{pmatrix}}
. \nonumber
\end{eqnarray}
Above, we use that $\mathcal{F}_{\delta}\begin{pmatrix} \invunf \chi_1 + u_0 \\ \invunf \chi_2 + q_0 \end{pmatrix} \in \dom(B_{\varepsilon})$.
This can be seen using the facts that \[\mathcal{F}_{\delta}\begin{pmatrix} u_0+ \chi_1\\ q_0+ \chi_2 \end{pmatrix} \in \dom(B_{\mathsf{hom}})\] and that $\invunf$ and $\mathcal{F}_{\delta}$ commute and, thus, 
\[
\mathcal{F}_{\delta}\begin{pmatrix} \invunf \chi_1 + u_0 \\ \invunf \chi_2 + q_0 \end{pmatrix} \in \dom\left(\begin{pmatrix}0 & -\curl_{x}\\
\curlo & 0 \end{pmatrix}\right),
\]
by Lemma \ref{lem:curlclo}.
Since $(u_{\varepsilon},q_{\varepsilon})$ solves problem (\ref{equation:935}) and $(f,g)\in \kar(\mathcal{T}_{\varepsilon}-1)$, it follows that the first term on the right-hand side satisfies
\begin{align*}
\frac{1}{c} \Re \expect{\mathcal{F}_{\delta} \overline{B}_{\varepsilon}\begin{pmatrix}  u_{\varepsilon} \\ q_{\varepsilon}\end{pmatrix},\mathcal{F}_{\delta}\begin{pmatrix} u_{\varepsilon}-\mathcal{T}_{-\varepsilon}\chi_1- u_0\\ q_{\varepsilon}- \mathcal{T}_{-\varepsilon}\chi_2 - q_0 \end{pmatrix}} & = \frac{1}{c} \Re \expect{\mathcal{F}_{\delta} \begin{pmatrix}  f \\ g \end{pmatrix},\mathcal{F}_{\delta}\begin{pmatrix} u_{\varepsilon}-\mathcal{T}_{-\varepsilon}\chi_1- u_0 \\ q_{\varepsilon}- \mathcal{T}_{-\varepsilon}\chi_2 - q_0 \end{pmatrix}}\\
& = \frac{1}{c} \Re \expect{\mathcal{F}_{\delta} \begin{pmatrix}  f \\ g \end{pmatrix},\mathcal{F}_{\delta}\begin{pmatrix} \unf u_{\varepsilon}-\chi_1- u_0 \\ \unf q_{\varepsilon}- \chi_2 - q_0 \end{pmatrix}}.
\end{align*}
The last expression vanishes in the limit $\varepsilon \to 0$ since $(u_{\varepsilon},q_{\varepsilon})\overset{2}{\rightharpoonup}(u_0+\chi_1, q_0+\chi_2)$. We treat the second term on the right-hand side of (\ref{equation:1041}) as follows:
\begin{alignat}{3}
\begin{aligned}\label{eq:1302}
& \Re \expect{B_{\varepsilon} \mathcal{F}_{\delta} \begin{pmatrix}  u_0+ \mathcal{T}_{-\varepsilon}\chi_1 \\ q_0 + \mathcal{T}_{-\varepsilon}\chi_2 \end{pmatrix},\mathcal{F}_{\delta}\begin{pmatrix} u_{\varepsilon}-\mathcal{T}_{-\varepsilon}\chi_1- u_0\\ q_{\varepsilon}- \mathcal{T}_{-\varepsilon}\chi_2 - q_0 \end{pmatrix}}\\ & =  \Re \expect{\brac{\begin{pmatrix} \partial_{t,\nu}\eta + \sigma & 0 \\ 0 & \partial_{t,\nu}\mu  \end{pmatrix} + \unf \begin{pmatrix}0 & -\curl_{x}\\ \curlo & 0  \end{pmatrix}\mathcal{T}_{-\varepsilon}} \mathcal{F}_{\delta} \begin{pmatrix} u_0 + \chi_1\\ q_0+ \chi_2 \end{pmatrix},\mathcal{F}_{\delta}\begin{pmatrix} \unf u_{\varepsilon}-\chi_1- u_0 \\ \unf q_{\varepsilon}- \chi_2 - q_0 \end{pmatrix}}  \\
& =  \Re \expect{\brac{\begin{pmatrix} \partial_{t,\nu}\eta + \sigma & 0 \\ 0 & \partial_{t,\nu}\mu  \end{pmatrix} +  \begin{pmatrix}0 & -\curl_{x}\\ \curlo & 0  \end{pmatrix}} \mathcal{F}_{\delta} \begin{pmatrix} u_0 + \chi_1\\ q_0+ \chi_2 \end{pmatrix},\mathcal{F}_{\delta}\begin{pmatrix} \unf u_{\varepsilon}-\chi_1- u_0 \\ \unf q_{\varepsilon}- \chi_2 - q_0 \end{pmatrix}}. 
\end{aligned}
\end{alignat}
In order to justify the second equality above, we compute
\begin{equation}\label{eq:1275}
\begin{pmatrix} 0 \; & -\curl_{x}\\ \curlo \; & 0  \end{pmatrix}\invunf \mathcal{F}_{\delta} \begin{pmatrix} u_0+\chi_1 \\ q_0+ \chi_2 \end{pmatrix}= \begin{pmatrix} \curl_{x} \invunf \mathcal{F}_{\delta} \brac{q_0 + \chi_2}\\ \curlo \invunf \mathcal{F}_{\delta}\brac{u_0+ \chi_1} \end{pmatrix}= \begin{pmatrix} \invunf \curl_{x} \mathcal{F}_{\delta} \brac{q_0 + \chi_2}\\ \invunf \curlo \mathcal{F}_{\delta}\brac{u_0+ \chi_1} \end{pmatrix},
\end{equation} 
where we used the commutation relations from Lemma \ref{lemma:964} (b), the fact that $\mathcal{F}_{\delta}$ and $\curl_{\omega}$ commute, and that $(u_0+\chi_1)\in \kar(\curl_{\omega})$ and $\brac{q_0 + \chi_2} \in \kar(\curl_{\omega})$.
Also, letting $\varepsilon\to 0$ in (\ref{eq:1302}), the right-hand side vanishes since $(u_{\varepsilon}, q_{\varepsilon})\overset{2}{\rightharpoonup}(u_0+ \chi_1,q_0+ \chi_2)$. This concludes the proof.
\end{proof}
If we, additionally, assume that $(f,g)\in \dom(\partial_{t,\nu})$, it follows that $(u_{\varepsilon},q_{\varepsilon})\in \dom(\partial_{t,\nu})$ and $(u_{\varepsilon}, q_{\varepsilon})\in \dom(B_{\varepsilon})$ (the analogous claims hold if we replace $B_{\varepsilon}$ by $B_{\mathsf{hom}}$ and $(u_{\varepsilon},q_{\varepsilon})$ by $(u_0+\chi_1,q_0+\chi_2)$).
As a result of this, we obtain the following:
\begin{corollary}\label{cor:1326}
Assume the same assumptions as in Corollary \ref{cor:1199}. Additionally, we assume that $(f,g) \in H^1_{\nu}(\R;H) (= \dom(\partial_{t,\nu}))$. Then 
\begin{equation*}
\| u_{\varepsilon} - \mathcal{T}_{-\varepsilon}\chi_1 - u_0 \|^2_{L^2_{\nu}(\R; H_0)} + \| q_{\varepsilon}-\mathcal{T}_{-\varepsilon}\chi_2 - q_0 \|^2_{L^2_{\nu}(\R; H_1)} \to 0 \quad \text{as }\varepsilon\to 0.
\end{equation*}
\begin{proof}
Since $(f,g)\in H^1_{\nu}(\R; H)$, we may apply the operator $(1+\partial_{t,\nu})$ to both sides of the equations (\ref{equation:935}) and (\ref{equation:966}) to obtain

\begin{align*}
\overline{B}_{\varepsilon}(1+ \partial_{t,\nu})\begin{pmatrix} u_{\varepsilon}\\ q_{\varepsilon}\end{pmatrix}=(1+\partial_{t,\nu})\begin{pmatrix}f \\ g \end{pmatrix}, \\
\overline{B}_{\mathsf{hom}} (1+\partial_{t,\nu})\begin{pmatrix}
u_0 + \chi_1\\ q_0 + \chi_2
\end{pmatrix} = (1+\partial_{t,\nu})\begin{pmatrix} f \\ g  \end{pmatrix}.
\end{align*}
We have that $w_{\varepsilon}:= \brac{1+ \partial_{t,\nu}}\begin{pmatrix} u_{\varepsilon} \\ q_{\varepsilon} \end{pmatrix}$ and $w_0 :=\brac{1+ \partial_{t,\nu}} \begin{pmatrix} u_0+\chi_1 \\ q_0+ \chi_2 \end{pmatrix}$ solve equations (\ref{equation:935}) and (\ref{equation:966}) with right-hand side $\brac{1+\partial_{t,\nu}}\begin{pmatrix}f \\ g \end{pmatrix}$ (instead of $\begin{pmatrix}f \\ g \end{pmatrix}$). As a result of this, 
Proposition \ref{prop:1277} implies that for any $\delta>0$, we have
\begin{equation*}
\|\mathcal{F}_{\delta}\brac{w_{\varepsilon} - \mathcal{T}_{-\varepsilon}w_0} \|^2_{L^2_{\nu}(\R; H)} \to 0 \quad \text{as }\varepsilon\to 0.
\end{equation*}
Setting $\delta=1$, the claim follows.
\end{proof}
\end{corollary}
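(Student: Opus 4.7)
The plan is to leverage Proposition \ref{prop:1277}, which already delivers the desired convergence \emph{after} applying the temporal mollifier $\mathcal{F}_{\delta}=(1+\delta\partial_{t,\nu})^{-1}$ for any $\delta>0$. The key observation is that $\mathcal{F}_{\delta}$ is not a smoothing operator in the classical sense but the \emph{boundedly invertible} inverse of $1+\delta\partial_{t,\nu}$; in particular, with the choice $\delta=1$, the relation $(1+\partial_{t,\nu})\mathcal{F}_{1}=1$ holds. So if one can boost the time regularity of the solution by one derivative, the mollified statement in Proposition \ref{prop:1277} for this boosted solution becomes the unmollified statement for the original solution.

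To execute this, I would first apply the operator $1+\partial_{t,\nu}$ to both sides of the microscopic Maxwell system \eqref{equation:935} and the homogenized system \eqref{equation:966}. The assumption $(f,g)\in H_{\nu}^{1}(\mathbb{R};H)=\dom(\partial_{t,\nu})$ guarantees that the new right-hand side $(1+\partial_{t,\nu})(f,g)$ lies in $L_\nu^2(\mathbb{R};H)$. Since $\partial_{t,\nu}$ acts only in the time variable and therefore commutes with the spatial/stochastic operators $\mathcal{T}_{\pm\varepsilon}$, $\curlo$, $\curl_{x}$, $\iota_s$, and with the multiplications by the time-independent coefficients $\eta_0,\sigma_0,\mu_0$, this action preserves the structure of both equations. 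Consequently, the shifted solutions
\[
w_{\varepsilon}\coloneqq (1+\partial_{t,\nu})\begin{pmatrix}u_{\varepsilon}\\ q_{\varepsilon}\end{pmatrix},\qquad w_{0}\coloneqq (1+\partial_{t,\nu})\begin{pmatrix}u_{0}+\chi_1\\ q_{0}+\chi_2\end{pmatrix}
\]
satisfy equations of exactly the form \eqref{equation:935} and \eqref{equation:966} but with the more regular datum $(1+\partial_{t,\nu})(f,g)$, which still lies in $\mathcal{H}_0\oplus\mathcal{H}_1$ (invariance under $\partial_{t,\nu}$ and $\mathcal{T}_\varepsilon$ is clear since $\kar(\mathcal{T}_\varepsilon-1)$ and $\kar(C_s)$, $\kar(C_s^*)$ are time-independent).

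With $w_\varepsilon$ and $w_0$ playing the roles of the microscopic and two-scale-limit solutions of Proposition \ref{prop:1277}, that result yields $\|\mathcal{F}_{\delta}(w_{\varepsilon}-\mathcal{T}_{-\varepsilon}w_{0})\|_{L_{\nu}^{2}(\mathbb{R};H)}\to 0$ as $\varepsilon\to 0$ for any $\delta>0$. Choosing $\delta=1$, using the commutation $\mathcal{F}_1\mathcal{T}_{-\varepsilon}=\mathcal{T}_{-\varepsilon}\mathcal{F}_1$, and applying $\mathcal{F}_1=(1+\partial_{t,\nu})^{-1}$ to the definitions of $w_\varepsilon,w_0$ recovers exactly the quantity in the claimed limit, so the corollary follows.

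The only real obstacle is the bookkeeping required for the first step: one must verify that $(u_\varepsilon,q_\varepsilon)$ (and analogously the homogenized solution) actually lies in $\dom(\partial_{t,\nu}\overline{B}_{\varepsilon})$, so that differentiating the equation in $t$ is legitimate. This follows from the fact that $\partial_{t,\nu}$ commutes (in the sense of causal analytic functional calculus) with $\overline{B}_{\varepsilon}^{-1}$ whenever the material law and $A$ are time-autonomous, which is standard in the Picard framework recalled in Section \ref{sec:ahil}; concretely, applying $(1+\partial_{t,\nu})^{-1}$ on the left of the differentiated equation and using continuous invertibility of $\overline{B}_\varepsilon$ from Theorem \ref{thm:st} produces $w_\varepsilon$ as the genuine $L_\nu^2$-solution with the boosted right-hand side.
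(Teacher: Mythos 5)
Your proof is correct and follows essentially the same route as the paper's: apply $(1+\partial_{t,\nu})$ to both the $\varepsilon$-system and the homogenized system to obtain new solutions $w_\varepsilon$, $w_0$ with the more regular datum, invoke Proposition~\ref{prop:1277} for these, and then set $\delta=1$ so that $\mathcal{F}_1=(1+\partial_{t,\nu})^{-1}$ cancels the boost and (via the commutation $\mathcal{F}_1\mathcal{T}_{-\varepsilon}=\mathcal{T}_{-\varepsilon}\mathcal{F}_1$) recovers the original quantity. Your extra remarks on verifying that the original solutions lie in $\dom(\partial_{t,\nu}\overline{B}_\varepsilon)$ make explicit a step that the paper leaves implicit, but this is a clarification rather than a different argument.
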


\noindent
\subsection{Stochastic homogenization of some mixed type equations}\label{sec:wave}
A standard reference for homogenization of the wave and heat equation is \cite{brahim1992correctors}, where general equations are treated in the framework of H-convergence. In contrast to standard results for the wave or heat equation, the equation we consider features coefficients $\eta$ and $\sigma$, cf. \eqref{eq:1325}, that are allowed to be alternatingly vanishing in some regions of the considered physical domain. This means that our setting contains equations of mixed alternating hyperbolic-parabolic type.

Let $Q \subseteq \R^n$ be open and $(\Omega,\Sigma, \mu, \tau)$ be a probability space satisfying Assumption \ref{assumpt:267}. We consider the stochastic unfolding operator $\unf: L^2(Q)\otimes L^2(\Omega)\to L^2(Q)\otimes L^2(\Omega)$ defined in \eqref{eq:298}. The role of $C_d$ and $C_s$ in this setting is played by the operators $\nablao$ and $\mathrm{grad}_{\omega}$ (respectively).

Let $\nu_0>0$, $A \in L^\infty(Q\times\Omega)^{n\times n}$ be such that there exists $c>0$ with $A$ Hermitian a.e and $c \leq A(x,\omega)\leq \frac{1}{c}$ a.e.. Also, let $\eta,\sigma \in L^{\infty}(Q \times \Omega)$ be such that there exists $c>0$ with
\begin{equation*}
\Re(z \eta + \sigma) \geq c \quad \text{(for all $z \in  \C_{\Re\geq\nu_0}$)}.
\end{equation*}

  Let $H_0=L^2(Q)\otimes L^2(\Omega)$, $H_1= L^2(Q)\otimes L^2(\Omega)^{n}$ and $H=H_0\oplus H_1$. For $\varepsilon>0$ and $f\in L^2(Q)$, we consider the following system
\begin{equation}\label{eq:1345}
\overline{B}_{\varepsilon} \begin{pmatrix}u_{\varepsilon} \\ q_{\varepsilon} \end{pmatrix}=\begin{pmatrix}
f \\ 0
\end{pmatrix}, \quad \text{where }B_{\varepsilon}= \mathcal{T}_{-\varepsilon} \begin{pmatrix} \partial_{t,\nu} \eta + \sigma  \; & 0 \\ 0 \; & \partial_{t,\nu} A^{-1} \end{pmatrix} \unf + \begin{pmatrix} 0 \; & \dive_{x} \\ \nablao \; & 0 \end{pmatrix}.
\end{equation}
According to Theorem \ref{thm:st} the above system has a unique solution $(u_{\varepsilon},q_{\varepsilon}) \in L^2_{\nu}(\R; H)$. 
\begin{remark}
Setting $w_{\varepsilon}=\partial_{t,\nu}^{-1}u_{\varepsilon}$, it follows that $w_{\varepsilon}$ solves the following wave equation
\begin{equation}\label{eq:1325}
\eta_{\varepsilon} \partial_{t,\nu}^2 w_{\varepsilon}+\sigma_{\varepsilon}\partial_{t,\nu}w_{\varepsilon}-\dive_{x}A_{\varepsilon} \nablao w_{\varepsilon}= f,
\end{equation} 
where the oscillating coefficients are given as the composition $A_{\varepsilon} := \invunf A \unf$, $\eta_{\varepsilon}:= \invunf \eta\unf$ and $\sigma_{\varepsilon}:= \invunf \sigma \unf$.
\end{remark}
Using our abstract homogenization result Theorem \ref{thm:mr} we obtain:
\begin{corollary}\label{cor:1331}
Let $A$, $\eta$, $\sigma$ be given as above and let $(u_{\varepsilon},q_{\varepsilon}) \in L^2_{\nu}(\R; H)$ be the unique solution to (\ref{eq:1345}). Then
\begin{equation*}
\unf (u_{\varepsilon},q_{\varepsilon})\rightharpoonup \brac{\iota_s u, \iota_{s^*} q} \quad \text{weakly in }L^2_{\nu}(\R; H).
\end{equation*}
Above, $\iota_s$ and $\iota_{s^{*}}$ denote the canonical embeddings $\iota_s: \kar(\mathrm{grad}_{\omega})\to H_0$ and $\iota_{s^*}: \kar(\dive_{\omega})\to H_1$, and $(u,q)\in L^2_{\nu}(\R; \kar(\mathrm{grad}_{\omega})\oplus \kar(\dive_{\omega}))$ is the unique solution to
\begin{align*}
& \overline{B}_{\mathsf{hom}}\begin{pmatrix} u \\ q \end{pmatrix}= \begin{pmatrix} \iota_{s}^*f \\ 0 \end{pmatrix}, \\ & B_{\mathsf{hom}}=\begin{pmatrix} \partial_{t,\nu} \iota_s^* \eta \iota_s + \iota_{s}^* \sigma \iota_s \; & 0\\
0 \; & \partial_{t,\nu} \iota_{s^*}^{*} A^{-1} \iota_{s^*}  \end{pmatrix}+ \begin{pmatrix} 0 \; & \overline{\iota_{s}^* \dive_{x} \iota_{s^*}}\\ \overline{\iota^*_{s^*} \nablao \iota_s} \; & 0\end{pmatrix}.
\end{align*}
\end{corollary}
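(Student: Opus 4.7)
The plan is to realise Corollary \ref{cor:1331} as a direct specialisation of Theorem \ref{thm:mr}. I would fix the abstract data $H_d=L^2(Q)$, $H_s=L^2(\Omega)$, $m=1$ (with $n$ components for the $q$-variable), and $C_d=\nablao$, $C_s=\mathrm{grad}_{\omega}$. The structural assumptions \eqref{eq:core}--\eqref{eq:com2} (and \eqref{eq:inv}, although the latter is not needed for Theorem \ref{thm:mr}) are exactly the content of Lemma \ref{lemma:964}(a), upon choosing $D_1=D_2=C_c^\infty(Q)$ and $E_1=E_2=H^\infty(\Omega)$ to realise \eqref{eq:core}. With these identifications, the equation \eqref{eq:1345} is of the form \eqref{eq:tdp}; the sign discrepancy between the skew-adjoint block $\begin{pmatrix}0 & \dive_x\\ \nablao & 0\end{pmatrix}$ in \eqref{eq:1345} and $\begin{pmatrix}0 & C_d^*\\ -C_d & 0\end{pmatrix}$ in \eqref{eq:tdp} is immaterial, since both operators are skew-self-adjoint and Theorem \ref{thm:mr} remains valid after an overall sign change (equivalently, one may replace $(u_\epsilon,q_\epsilon)$ by $(-u_\epsilon,-q_\epsilon)$ in the abstract theorem).

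Next, I would verify the coefficient hypotheses of Section \ref{sec:appl}. The maps $\mathcal{M}_0(z)=z\eta+\sigma$ and $\mathcal{M}_1(z)=zA^{-1}$ are analytic in $z$ with values in $L(H_0)$ and $L(H_1)$, viewed as multiplication operators. The real-part bound for $\mathcal{M}_0$ is exactly the standing assumption $\Re(z\eta+\sigma)\geq c$ on $\mathbb{C}_{\Re\geq\nu_0}$. For $\mathcal{M}_1$, since $A$ is Hermitian with $c\leq A\leq 1/c$, the same bounds hold for $A^{-1}$, and $\Re\langle zA^{-1}\psi,\psi\rangle=(\Re z)\langle A^{-1}\psi,\psi\rangle\geq \nu_0 c\|\psi\|^2$ for $z\in\mathbb{C}_{\Re>\nu_0}$. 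Finally, the right-hand side $(f,0)$ lies in $\mathcal{H}_0\oplus\mathcal{H}_1$: deterministic functions $f\in L^2(Q)$ are shift-invariant, hence fixed by every $\unf$ and annihilated by $\mathrm{grad}_\omega$, so $L^2(Q)\otimes\mathbb{C}\subseteq \mathcal{H}_0$; the $0$-component is trivially in $\mathcal{H}_1$. Separability of $H$ is inherited from separability of $L^2(\Omega)$ (Assumption \ref{assumpt:267}) and $L^2(Q)$.

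With these verifications complete, Theorem \ref{thm:mr} yields $\unf(u_\varepsilon,q_\varepsilon)\rightharpoonup \mathcal{S}_0(\partial_{t,\nu})(f,0)$ weakly in $L_\nu^2(\R;H)$. Reading off the formula for $\mathcal{S}_0$ displayed just before Theorem \ref{thm:mr} together with Remark \ref{rem:rep}, the limit is precisely $(\iota_s u,\iota_{s^*}q)$ with $(u,q)\in L^2_\nu(\R;\kar(\mathrm{grad}_\omega)\oplus\kar(\dive_\omega))$ the unique solution of the block system stated in the corollary, where $\kar(C_s^*)=\kar(\dive_\omega)$ identifies the target of $\iota_{s^*}$, and the closures $\overline{\iota_s^*\dive_x\iota_{s^*}}$ and $\overline{\iota_{s^*}^*\nablao\iota_s}$ correspond to $\widetilde{C_d}^*$ and $\widetilde{C_d}$ of Remark \ref{rem:rep}. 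Note also that $\iota_s^* f=f$ because $f$ already lies in $\kar(\mathrm{grad}_\omega)$. There is no genuine obstacle here: the main work has been delegated to Theorem \ref{thm:mr}, and all that is required is careful bookkeeping of the embeddings and verification of the coercivity of $\mathcal{M}_1(z)$, which is the only hypothesis not stated verbatim as an assumption on the coefficients.
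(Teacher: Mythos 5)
Your proposal is correct and follows exactly the route the paper takes: the paper's own ``proof'' is just the one-line remark ``Using our abstract homogenization result Theorem~\ref{thm:mr} we obtain:'', so the content of the proof is precisely the verification of the hypotheses of Theorem~\ref{thm:mr} that you carry out (identification $C_d=\nablao$, $C_s=\mathrm{grad}_\omega$, Lemma~\ref{lemma:964}(a) for \eqref{eq:core}--\eqref{eq:com2}, coercivity of $\mathcal{M}_1(z)=zA^{-1}$ via Hermiticity and boundedness of $A$, membership of $(f,0)$ in $\mathcal{H}_0\oplus\mathcal{H}_1$, and unwinding the formula for $\mathcal{S}_0$ via Remark~\ref{rem:rep}). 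One small correction to your parenthetical: replacing $(u_\epsilon,q_\epsilon)$ by $(-u_\epsilon,-q_\epsilon)$ does \emph{not} flip the sign of the skew-self-adjoint block (it flips the sign of the right-hand side and leaves the operator unchanged); the sign change in the off-diagonal block is instead effected by conjugating with $\diag(1,-1)$, i.e.\ $(u,q)\mapsto(u,-q)$ and $(f,g)\mapsto(f,-g)$, which is harmless here since $g=0$ — but your leading claim that Theorem~\ref{thm:mr} carries over verbatim with $-A$ in place of $A$ is the correct way to see it, and the rest of the argument is unaffected.
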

\begin{remark}
In order to recover the classical form of the homogenized wave equation, we set $w= \partial_{t,\nu}^{-1} u$ to obtain (with the usual abuse of notation)
\begin{equation*}
P_{\mathsf{inv}}\eta \partial_{t,\nu}^2 w + P_{\mathsf{inv}}\sigma \partial_{t,\nu}w + \dive_{x} P_{\mathsf{inv}} q = f,
\end{equation*}
where $q$ satisfies $P_{\kar(\dive_{\omega})}\brac{\partial_{t,\nu} A^{-1}q +  \nablao \partial_{t,\nu} w} = 0$ (here $P_{\kar(\dive_{\omega})}=\iota_{s^*}^*$). Applying $\partial_{t,\nu}^{-1}$, it follows that $P_{\kar(\dive_{\omega})} A^{-1} q = - \nablao w$. As a result of this and using (\ref{equation:957}), we have that $A^{-1}q = -\nablao w + \chi$ where $\chi\in L^2(Q)\otimes L^2_{\mathsf{pot}}(\Omega)$ (note that $\chi$ is uniquely determined). Also, since $q \in \kar(\dive_{\omega})$, we have $-\dive_{\omega} A\brac{\nablao w - \chi}=0$. Collecting these facts, we obtain that $w$ satisfies
\begin{equation}\label{eq:1349}
P_{\mathsf{inv}}\eta \partial_{t,\nu}^2 w + P_{\mathsf{inv}}\sigma \partial_{t,\nu}w - \dive_{x} P_{\mathsf{inv}} A \brac{\nablao w - \chi}  = f,
\end{equation} 
where $\chi$ solves the usual corrector equation $-\dive_{\omega} A(\nablao w - \chi)=0$. 
\end{remark}
The above remark suggests the following corrector type statement $\unf \nablao w_{\varepsilon} + \chi \to  \nablao w$ that is equivalent to $A^{-1} \unf q_{\varepsilon} - \chi \to  P_{\kar(\dive_{\omega})}A^{-1}q = A^{-1}q - \chi $. We formalize this in the following.
\begin{proposition}[Corrector type result] Let $f\in H^1_{\nu}(\R; H_0)$, then
\begin{equation*}
\|\partial_{t,\nu}^{-1} u_{\varepsilon}-\partial_{t,\nu}^{-1} u\|_{L^2_{\nu}(\R,H_0)} + \|u_{\varepsilon} - u\|_{L^2_{\nu}(\R; H_0)}+ \|\unf q_{\varepsilon}- q\|_{L^2_{\nu}(\R; H_1)} \to 0 \quad \text{as }\varepsilon \to 0.
\end{equation*}
\begin{proof}
This proof follows similar lines to the proof of Proposition \ref{prop:1277}. Using the assumptions on $A$, $\sigma$, and $\eta$, we have 
\[
\Re \left\langle \begin{pmatrix} w_\epsilon \\ q_\epsilon\end{pmatrix}, \unf \begin{pmatrix} \partial_{t,\nu}\eta+ \sigma \; & 0\\ 0 \; & \partial_{t,\nu} A^{-1}  \end{pmatrix}\invunf\begin{pmatrix} w_\epsilon \\ q_\epsilon\end{pmatrix}\right\rangle \geq c\left\|\begin{pmatrix} w_\epsilon \\ q_\epsilon\end{pmatrix}\right\|^2.
\]
 As a result of this and by the skew-self-adjointness of $\begin{pmatrix} 0 \; & \dive_{x}\\ \nablao \; & 0 \end{pmatrix}$, we obtain
\begin{eqnarray*}
& & \|u_{\varepsilon}-\invunf u\|^{2}_{L^2_{\nu}(\R; H_0)}+\|q_{\varepsilon}-\invunf q\|^2_{L^2_{\nu}(\R; H_1)}\\ & \leq & \frac{1}{c} \Re \expect{B_{\varepsilon} \begin{pmatrix}
u_{\varepsilon}-\invunf u\\ q_{\varepsilon}-\invunf q
\end{pmatrix},\begin{pmatrix}
u_{\varepsilon}-\invunf u\\ q_{\varepsilon}- \invunf q
\end{pmatrix}} \\
& = & \frac{1}{c} \Re\expect{\begin{pmatrix}f\\ 0 \end{pmatrix}, \begin{pmatrix} u_{\varepsilon} - \invunf u\\ q_{\varepsilon}-\invunf q \end{pmatrix}}-\frac{1}{c}\Re \expect{B_{\varepsilon} \begin{pmatrix} \invunf u \\ \invunf q \end{pmatrix}, \begin{pmatrix} u_{\varepsilon}-\invunf u \\ q_{\varepsilon}-\invunf q \end{pmatrix}},
\end{eqnarray*}
where in the second equality we use the equation (\ref{eq:1345}). The first term on the right-hand side vanishes in the limit $\varepsilon\to 0$ using Corollary \ref{cor:1331} and that $\unf f= f$. The second term is treated as follows. We have
\begin{eqnarray*}
&& \Re \expect{B_{\varepsilon}\begin{pmatrix}
\invunf u \\ \invunf q
\end{pmatrix}, \begin{pmatrix} u_{\varepsilon} - \invunf u \\ q_{\varepsilon} - \invunf q \end{pmatrix}}\\
& = & \Re \expect{ \brac{ \begin{pmatrix}
\partial_{t,\nu} \eta + \sigma \; & 0 \\ 0 \; & \partial_{t,\nu} A^{-1}
\end{pmatrix} + \unf \begin{pmatrix} 0 \; & \dive_{x} \\ \nablao \; & 0 \end{pmatrix} \invunf  }\begin{pmatrix} u \\ q  \end{pmatrix}, \begin{pmatrix} \unf u_{\varepsilon} - u \\ \unf q_{\varepsilon} - q   \end{pmatrix} }
\end{eqnarray*}
Similarly as in (\ref{eq:1275}), we obtain $\unf \begin{pmatrix} 0 \; & \dive_{x}\\ \nablao \; & 0 \end{pmatrix}\invunf \begin{pmatrix} u \\ q \end{pmatrix}=\begin{pmatrix} 0 \; & \dive_{x}\\ \nablao \; & 0 \end{pmatrix} \begin{pmatrix} u \\ q \end{pmatrix}$ and therefore the above expression vanishes in the limit $\varepsilon\to 0$. This concludes the proof.
\end{proof}
\end{proposition}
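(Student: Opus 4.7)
The plan is to follow the template of Proposition~\ref{prop:1277} and Corollary~\ref{cor:1326}, pairing coercivity of $B_\varepsilon$ against a carefully chosen error vector with the weak two-scale convergence furnished by Corollary~\ref{cor:1331}. The role of the hypothesis $f\in H^1_\nu(\R;H_0)$ is to promote the homogenized solution $(u,q)$ to $\dom(\partial_{t,\nu})$, which in turn is what allows the candidate $(\mathcal{T}_{-\varepsilon}u,\mathcal{T}_{-\varepsilon}q)$ to be plugged into the quadratic form induced by $B_\varepsilon$; unlike for Maxwell, this lets me avoid the $\mathcal{F}_\delta$-mollification and pass directly to $\delta=1$.

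The key estimate is coercivity: from the positivity of $\partial_{t,\nu}\eta+\sigma$ and $\partial_{t,\nu}A^{-1}$ combined with skew-self-adjointness of the spatial block, one has
\[
  c\,\|(v,r)\|_{L^2_\nu(\R;H)}^2\leq \Re\bigl\langle \overline{B}_\varepsilon(v,r),(v,r)\bigr\rangle\qquad\bigl((v,r)\in\dom(\overline{B}_\varepsilon)\bigr).
\]
I would apply this to $(v,r)\coloneqq(u_\varepsilon-\mathcal{T}_{-\varepsilon}u,\,q_\varepsilon-\mathcal{T}_{-\varepsilon}q)$ and, using $\overline{B}_\varepsilon(u_\varepsilon,q_\varepsilon)=(f,0)$, expand to obtain
\[
 c\,\|(v,r)\|^2\leq \Re\langle (f,0),(v,r)\rangle-\Re\bigl\langle \overline{B}_\varepsilon(\mathcal{T}_{-\varepsilon}u,\mathcal{T}_{-\varepsilon}q),(v,r)\bigr\rangle.
\]

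Passing to the limit on the right-hand side is the heart of the argument. Unitarity of $\mathcal{T}_\varepsilon$ combined with $\mathcal{T}_\varepsilon f=f$ (cf.\ the definition of $\mathcal{H}_0$) rewrites the first term as $\Re\langle f,\mathcal{T}_\varepsilon u_\varepsilon-u\rangle$, which vanishes by Corollary~\ref{cor:1331}. For the second term, the commutation relations in Lemma~\ref{lemma:964}(a) and their adjoint counterpart \eqref{eq:com1b}, together with the inclusions $u\in\kar(\mathrm{grad}_\omega)$ and $q\in\kar(\dive_\omega)$, yield $\nablao\mathcal{T}_{-\varepsilon}u=\mathcal{T}_{-\varepsilon}\nablao u$ (in fact $\mathcal{T}_{-\varepsilon}u=u$ by \eqref{eq:inv}) and $\dive_x\mathcal{T}_{-\varepsilon}q=\mathcal{T}_{-\varepsilon}\dive_x q$. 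Hence the spatial part of $\overline{B}_\varepsilon(\mathcal{T}_{-\varepsilon}u,\mathcal{T}_{-\varepsilon}q)$ reduces to $\mathcal{T}_{-\varepsilon}(\dive_x q,\nablao u)$ and the temporal diagonal is $\mathcal{T}_{-\varepsilon}\diag(\partial_{t,\nu}\eta+\sigma,\partial_{t,\nu}A^{-1})(u,q)$; shifting $\mathcal{T}_\varepsilon$ onto the test vector by unitarity turns the whole inner product into a pairing of a fixed element of $H$ against $(\mathcal{T}_\varepsilon u_\varepsilon-u,\mathcal{T}_\varepsilon q_\varepsilon-q)$, which tends weakly to zero by Corollary~\ref{cor:1331}.

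Putting these ingredients together gives $\|u_\varepsilon-\mathcal{T}_{-\varepsilon}u\|+\|q_\varepsilon-\mathcal{T}_{-\varepsilon}q\|\to 0$. Because $\mathcal{T}_{-\varepsilon}u=u$, this already yields the claimed convergence of $u_\varepsilon$ to $u$; applying the isometry $\mathcal{T}_\varepsilon$ to the second summand yields $\|\mathcal{T}_\varepsilon q_\varepsilon-q\|\to 0$; and finally $\|\partial_{t,\nu}^{-1}(u_\varepsilon-u)\|\leq \nu^{-1}\|u_\varepsilon-u\|$ closes the first term in the statement. The principal obstacle I anticipate is the justification that $(\mathcal{T}_{-\varepsilon}u,\mathcal{T}_{-\varepsilon}q)\in\dom(\overline{B}_\varepsilon)$: this is precisely why the stronger regularity on $f$ is imposed (to secure $(u,q)\in\dom(\partial_{t,\nu})$), and it is here that one must verify that $\mathcal{T}_{-\varepsilon}$ preserves the spatial domains of $\nablao$ and $\dive_x$ when applied to elements of $\kar(\mathrm{grad}_\omega)$ and $\kar(\dive_\omega)$---a kernel-compatibility fact in the spirit of Lemma~\ref{lem:curlclo} from the Maxwell case.
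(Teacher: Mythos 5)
Your proposal is correct and follows essentially the same route as the paper's own proof: coercivity of $B_\varepsilon$ applied to the error $(u_\varepsilon-\mathcal{T}_{-\varepsilon}u,\,q_\varepsilon-\mathcal{T}_{-\varepsilon}q)$, expansion via the equation, vanishing of both resulting terms by weak two-scale convergence (Corollary~\ref{cor:1331}), and the commutation relations of Lemma~\ref{lemma:964}(a) together with \eqref{eq:com1b} and \eqref{eq:inv} to handle the spatial block as in \eqref{eq:1275}. The domain issue you flag at the end is indeed the role of the hypothesis $f\in H^1_\nu(\R;H_0)$, and your bound $\|\partial_{t,\nu}^{-1}\|\le\nu^{-1}$ correctly yields the first summand in the statement.
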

Above, we assumed that the right-hand side $f$ is smooth for convenience, if this is not the case, we could use a mollification procedure as in Proposition \ref{prop:1277}. We briefly summarize the implications of the above proposition for the sequence $w_{\varepsilon}$:
\begin{equation}\label{eq:1382}
w_{\varepsilon}\to w, \quad \partial_{t,\nu} w_{\varepsilon}\to \partial_{t,\nu} w, \quad \nablao w_{\varepsilon}+ \invunf \chi \to \nablao w \quad (\text{strongly in }L^2_{\nu}(\R; L^2(Q)\otimes L^2(\Omega))).
\end{equation}
\begin{remark} It is well-known that the classical homogenized wave equation (\ref{eq:1349}) is an unsatisfactory approximation for (\ref{eq:1325}) on large time scales $t\gtrsim \varepsilon^{-2}$ (see \cite{Dohnal2014}). This fact is not reflected in our result ((\ref{eq:1382}) is given in a norm accounting for the whole time line $t\in \R$) since we use a norm which is weighted with an exponential weight $e^{-2 \nu t}$ ($\nu>0$) that diminishes the effects on large time scales.
\end{remark}

\section*{Acknowledgments}
SN and MV acknowledge funding by the Deutsche Forschungsgemeinschaft (DFG, German
Research Foundation) – project number 405009441, and in the context of TU Dresden's Institutional Strategy ``The Synergetic University''.

\end{document}